\documentclass[11pt]{amsart}
\usepackage{amsthm}
\usepackage{amscd}  
\usepackage{graphicx}
\usepackage{amssymb}
\usepackage{epstopdf}

  
    
\theoremstyle{plain}
\newtheorem{Thm}{Theorem}[section]

\newtheorem{Prop}[Thm]{Proposition}
\newtheorem{Cor}[Thm]{Corollary}
\newtheorem{Lem}[Thm]{Lemma}

\theoremstyle{definition}
\newtheorem{Defn}[Thm]{Definition}

\newtheorem{Rem}[Thm]{Remark}
 
\numberwithin{equation}{section}

\title{Non-Commutative Deformations of Derived McKay Correspondence for $A_n$ singularities}
\author{Yujiro Kawamata}
\date{}                                           
%
\usepackage{fancyhdr}
\pagestyle{fancy}
\lhead{}
\rhead{}
  
\begin{document}
\maketitle
\tableofcontents

\begin{abstract}
Let $Y = \mathbf C^2/\mathbf Z_{n+1}$ be a surface singularity of type $A_n$, let $f: X \to Y$ be the minimal resolution, 
and let $S = k[u,v] \# \mathbf Z_{n+1}$ be 
a twisted group algebra.
The derived McKay correspondence says that there is an equivalence of triangulated categories 
$D^b(\text{coh}(X)) \cong D^b(\text{mod-}S)$ over $Y$.
We will prove that this derived equivalence extends between the semi-universal non-commutative deformations 
of the commutative crepant resolution $X$ and the non-commutative crepant resolution $S$. 
 
MSC: 14B07, 14E16, 14F08.
\end{abstract}

\section{Introduction}

The purpose of this paper is to show that the derived McKay correspondence extends under non-commutative (NC)
deformations in an example.
This kind of extension is expected naturally because the Fourier-Mukai equivalence extends under NC deformations
in some examples (\cite{Toda}, \cite{BBBP}) and there is \lq\lq Mukai implies McKay'' principle (\cite{BKR}).

The derived McKay correspondence conjecture is a relationship between commutative and NC crepant 
resolutions.
Let $Y$ be an algebraic variety with only Gorenstein singularities.
A proper birational morphism $f: X \to Y$ from a smooth algebraic variety is said to be {\em crepant}
if there is no discrepancy between the canonical divisors, i.e., $f^*K_Y = K_X$.
Such $f$ is called a {\em commutative crepant resolution} (CCR) in this paper.
The non-commutative counterpart of CCR is a {\em non-commutative crepant resolution} (NCCR)
which is defined as follows by \cite{VdBergh} (also \cite{VdBergh2}): 
it is a coherent sheaf of NC associative $\mathcal O_Y$-algebras $S$ 
which is Cohen-Macaulay, 
has finite global dimension, and has a form $S = \text{End}_Y(M)$ for a reflexive $\mathcal O_Y$-module $M$.
We note that CCR or NCCR may or may not exist, and they are not necessarily unique when they exist.
There are also mixed cases of CCR and NCCR (cf. \cite{DK}).

The derived McKay correspondence conjecture claims that, if there exist both CCR $X$ and NCCR $S$, 
then there is an $\mathcal O_Y$-linear equivalence of triangulated categories 
$D^b(\text{coh}(X)) \cong D^b(\text{coh}(S))$.
We note that the underlying abelian categories $\text{coh}(X)$ and $\text{coh}(S)$ are completely different.
The former is geometric while the latter is algebraic.
For example, the set of simple objects over a point sheaf $\mathcal O_y$ for $y \in \text{Sing }Y$ is infinite
in $\text{coh}(X)$ but finite in $\text{coh}(S)$.
There are positive answers to this conjecture in special cases, e.g., 
in lower dimensional cases (\cite{KV} in dimension 2 and \cite{BKR} in dimension 3) and toric cases (\cite{toric}).
There are generalizations of the derived McKay correspondence to non-Gorenstein singularities, e.g., KLT singularities,
where the equivalences are replaced by fully faithful functors (cf. \cite{DK}, \cite{SDG}).

We consider in this paper the following question: when there is a derived McKay correspondence, is it extended
under NC deformations of CCR and NCCR?
A similar question was considered by \cite{Toda} and \cite{BBBP} for Fourier-Mukai equivalences.
According to \cite{BKR}, there is a principle that Mukai implies McKay, 
so it is natural to ask a similar question for McKay correspondences.
We will confirm this expectation in an explicit example by calculation, 
namely we treat $2$-dimensional toric singularities.

We consider the following case.
Let $Y = k^2/G$ be a $2$-dimensional toric singularity, 
where $k = \mathbf C$ and $G = \mathbf Z/(n+1)$ for a positive integer $n$,
and the action of $G$ is given by $g(u,v) = (\zeta u, \zeta^{-1}v)$ for coordinates $(u,v)$ and 
$\zeta = e^{2\pi i/(n+1)}$.
It is an isolated singularity of type $A_n$.
CCR is the minimal resolution $f: X \to Y$, and 
NCCR is given by a twisted group algebra $S = k[u,v]\# G$, where the multiplication is defined by the rule
$(ag)(bh) = ag(b)gh$ for $a,b \in k[u,v]$ and $g,h \in G$.

According to \cite{CBH}, we consider the semi-universal NC deformation of $S$ 
given by $\mathcal S = k[s_0,\dots,s_n]\langle u,v \rangle \# G/(uv - vu - \sum_{i=0}^n s_ig^i)$, 
where $s_0,\dots,s_n$ are algebraically independent deformation parameters.
It is a flat deformation of $S$ over an affine space.

The NC deformation $\mathcal X$ of $X$ we consider in this paper is a kind of an NC scheme.
It is constructed by gluing NC affine rings in the same way as $X$ is covered by commutative affine rings.
Here we note that the localization is not possible in general for NC rings, so we do not consider a space
with a structure sheaf of rings.
Instead an NC scheme $\mathcal X$ is defined as a collection of NC rings with gluing homomorphisms, 
which satisfy certain conditions (cocycle condition, flatness, and the birationality). 
$\mathcal X$ is flat over another polynomial ring $k[t_0,\dots,t_n]$.
We note that the base ring is global, and it is not infinitesimal nor formal unlike the deformation quantization. 

We then define the bounded derived categories of coherent modules on NC deformations of CCR and NCCR.
The following is the main result:

\begin{Thm}\label{main}
Let $Y$ be a $2$-dimensional toric singularity of type $A_n$, let $X$ and $S$ be its CCR and NCCR, 
and let $\mathcal X$ and $\mathcal S$ be their semi-universal NC deformations over polynomial algebras
$k[t_0,\dots,t_n]$ and $k[s_0,\dots,s_n]$ respectively.
Then there is an equivalence of triangulated categories
\[
D^b(\text{coh}(\mathcal X)) \cong D^b(\text{coh}(\mathcal S))
\]
which is linear over a linear coordinate change $(t_0,\dots,t_n) \to (s_0,\dots,s_n)$.
\end{Thm}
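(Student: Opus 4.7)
The plan is to prove the equivalence by a tilting-object argument that lifts Kapranov--Vasserot's classical derived McKay correspondence to the NC deformation level. On $X$ the equivalence $D^b(\text{coh}(X)) \cong D^b(\text{mod-}S)$ is realized by the tilting bundle $\mathcal{T} = \bigoplus_{i=0}^n \mathcal{L}_i$, a sum of the $n+1$ tautological line bundles indexed by the characters of $G$, whose endomorphism algebra is $S$. My goal is therefore three-fold: (a) construct a locally free deformation $\mathcal{T}_{\mathcal{X}}$ of $\mathcal{T}$ over the NC scheme $\mathcal{X}$; (b) prove it is tilting over the deformed base; and (c) identify $\mathrm{End}_{\mathcal{X}}(\mathcal{T}_{\mathcal{X}})$ with $\mathcal{S}$ under the claimed linear change of parameters.

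First I would record an explicit chart description of $\mathcal{X}$. The minimal resolution $X$ is covered by $n+1$ toric affine charts $U_0,\dots,U_n$, each isomorphic to $\mathbf{A}^2$, glued along Laurent localizations. For each chart I would write an NC ring $\mathcal{U}_i$ flat over $k[t_0,\dots,t_n]$ obtained by deforming the commutator of the two local coordinates in a way compatible with the exceptional curves, then verify the flatness, cocycle, and birationality conditions so that the family $\{\mathcal{U}_i\}$ genuinely defines the NC scheme $\mathcal{X}$. I would simultaneously lift each line bundle $\mathcal{L}_j$ to a rank-one projective module $\mathcal{L}_{j,\mathcal{X}}$ by specifying a consistent set of transition cocycles; since line bundles on a rational surface have essentially rigid local deformation theory, the lifts are determined once the chart descriptions are fixed.

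The heart of the argument is the computation of $\mathrm{End}_{\mathcal{X}}(\mathcal{T}_{\mathcal{X}})$ as an algebra over $k[t_0,\dots,t_n]$. Locally, a Hom between two tautological line bundles deforms to an explicit bimodule over the chart NC ring, and the algebra structure is assembled by Čech-style patching. Comparing this with the CBH algebra $\mathcal{S}$, whose defining relation $[u,v]=\sum_i s_i g^i$ decomposes by the characters of $G$, I would match coefficients: the $t_i$ deform the $i$-th exceptional curve, while the $s_j$ deform the $j$-th character component of the commutator, and the two bases are related by the discrete Fourier transform on $\mathbf{Z}/(n+1)$. This Fourier duality is the expected linear coordinate change in the statement.

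Finally, I would verify that $\mathcal{T}_{\mathcal{X}}$ is tilting on $\mathcal{X}$, i.e.\ $\mathrm{Ext}^{>0}_{\mathcal{X}}(\mathcal{T}_{\mathcal{X}},\mathcal{T}_{\mathcal{X}})=0$ and $\mathcal{T}_{\mathcal{X}}$ generates $D^b(\text{coh}(\mathcal{X}))$, and then invoke the tilting theorem for NC schemes to deduce the equivalence $R\mathrm{Hom}_{\mathcal{X}}(\mathcal{T}_{\mathcal{X}},-) : D^b(\text{coh}(\mathcal{X})) \to D^b(\text{coh}(\mathcal{S}))$. The main obstacle I anticipate is precisely this Ext-vanishing in the deformed setting: because the affine cover of $\mathcal{X}$ is not a cover by genuine affine schemes, the usual flat-base-change and semicontinuity arguments are not directly available, so the vanishing must be checked by an explicit Čech calculation using the chart-level NC rings. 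The generation statement is subtler still, and I expect to reduce it to the $t=0$ fiber by an NC analogue of Nakayama's lemma that exploits flatness of $\mathcal{X}$ over $k[t_0,\dots,t_n]$.
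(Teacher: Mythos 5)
Your overall strategy matches the paper's: construct a locally free lift $\mathcal T_{\mathcal X}$ of the $n+1$ tautological line bundles, prove it is tilting, and identify $\mathrm{End}_{\mathcal X}(\mathcal T_{\mathcal X})$ with the Crawley--Boevey--Holland algebra $\mathcal S$ via a discrete Fourier transform on $\mathbf Z/(n+1)$ plus a further linear substitution. That is exactly the architecture of the paper, including the fact that the coordinate change is the composition of the Fourier transform relating $(s_i)$ to the characters $(w_j)$ with another affine-linear map relating $(w_j)$ to $(t_j)$ read off from the commutator $[u^t,v^t]$ on the chart $\mathcal R_0$. The chart description, lift of line bundles by transition cocycles, and the Čech approach to Ext-vanishing are all as in the paper.

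There is, however, a genuine gap in your proposed proof of the generation statement. You suggest reducing generation to the $t=0$ fiber ``by an NC analogue of Nakayama's lemma that exploits flatness of $\mathcal X$ over $k[t_0,\dots,t_n]$.'' This cannot work as stated: the base is the global polynomial ring $k[t_0,\dots,t_n]$, which is neither local nor complete, and the paper explicitly stresses that its NC deformations are not infinitesimal or formal. A coherent $\mathcal X$-module supported away from the locus $t=0$ is completely invisible to the central fiber, so no Nakayama-type argument over this base can detect it. The paper instead proves generation intrinsically: it shows that for any nonzero coherent $\mathcal M$ one can find a divisorial sheaf $\mathcal R(-\sum d_iD_i)$ admitting a nonzero map to $\mathcal M$ (Lemma \ref{generator}), then uses Noetherianity and a bootstrap with the vanishing of $\text{\v CHom}^1$ to build a surjection onto $\mathcal M$ from a finite sum of divisorial sheaves (Lemma \ref{generator1}), and finally replaces arbitrary divisorial sheaves by direct summands of $\mathcal T$ via the Koszul-type short exact sequences of Lemma \ref{SES}. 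Your argument would need to be replaced by something along these lines.

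A secondary, smaller gap: you compute Ext's by ``explicit Čech calculation,'' which is what the paper does, but the derived functor $R\mathrm{Hom}_{\mathcal X}$ that defines the tilting equivalence is taken in the Grothendieck abelian category $\text{Qcoh}(\mathcal X)$ using injective resolutions, not Čech complexes. The paper has to prove (via Lemmas \ref{inj1} and \ref{inj2}) that the two cohomology theories agree for sources that are divisorial sheaves, and that the higher groups vanish beyond degree $1$. This comparison is a real step that your sketch silently assumes. Similarly, the claim that ``the lifts are determined once the chart descriptions are fixed'' because line bundles on a rational surface are rigid is only a heuristic here: the paper actually fixes explicit gluing data $\psi^{\mathcal D}_{i,\pm}$, and the careful choice is used throughout the computation of $\mathrm{End}(\mathcal T)$.
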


The right hand side is the usual bounded derived category of finitely generated $\mathcal S$-modules.
Since $S$ is an NC algebra, we can write $D^b(\text{mod-}\mathcal S)$ instead of 
$D^b(\text{coh}(\mathcal S))$.
For the left hand side, we define an $\mathcal X$-module as a collection of modules over the NC rings 
with the compatibility isomorphisms.
We will prove the theorem by showing that there is a tilting object in the category of coherent $\mathcal X$-modules, 
and its endomorphism ring is isomorphic to $\mathcal S$.

\vskip 1pc

The outline of the paper is as follows.
In \S 2, we define NC schemes and modules on them.
We note that such NC schemes are already considered in \cite{DVLL} in full generality.
Our theorem partly justifies our definition of NC schemes. 

In \S 3, we construct NC deformations of CCR and NCCR for our toric surface singularities.
The case for NCCR is the algebra $\mathcal S$ which is already constructed in \cite{CBH}, 
and the case for CCR is our new NC scheme $\mathcal X$.

We investigate the category of quasi-coherent modules on our $\mathcal X$ in \S 4.
It is a Grothendieck abelian category, and we can consider cohomology groups based on injective resolutions.
But they are difficult to calculate, so we will prove that they are isomorphic to \v Cech cohomology groups in the 
case which is needed.
We will prove that there is a locally free coherent module $\mathcal T$ which is tilting, 
i.e., there are no higher cohomology groups.
We also prove that $\mathcal T$ generates the derived category of coherent sheaves.  

We will compare the endomorphism algebra $\text{End}(\mathcal T)$ and the NC deformed algebra $\mathcal S$, 
and prove that they are isomorphic in \S 5.
We will prove it by showing that they are both quiver algebras whose bases are of type $\tilde A_n$, and the
sets of generators and relations are the same.
Then we conclude the proof of Theorem \ref{main} by the tilting theory. 

\vskip 1pc

The author would like to thank NCTS of National Taiwan University where 
the work was partly done while the author visited there.
This work is partly supported by JSPS Kakenhi 21H00970.

\section{NC schemes and (quasi-)coherent sheaves}
 
We will give temporary definitions of NC schemes and (quasi-)coherent sheaves on them which will be used in this paper.

\begin{Defn}
Let $I$ be a poset.
An {\em NC scheme} $X$ parametrized by $I$ consists of associative NC algebras $R_i$ for $i \in I$
and algebra homomorphisms $\phi_{ji}^X: R_i \to R_j$ for all $i < j$, called the {\em gluing homomorphisms}, 
which satisfy the following conditions:

(1) (cocycle condition) $\phi_{kj}^X \circ \phi_{ji}^X = \phi_{ki}^X$ for $i < j < k$. 

(2) (flatness) $R_j$ is flat as left and right $R_i$-modules through $\phi_{ji}^X$.

(3) (birationality condition) The natural homorphism of $R_j$-modules 
$R_j \otimes_{R_i} R_j \to R_j$ is bijective for $i < j$.
\end{Defn}

We note that the existence of the maximum $\max \{i,j\} \in I$ for $i,j \in I$ is not required in this paper.

For example, let $X$ be a (usual) algebraic variety 
and let $U_i = \text{Spec }R_i$ for $i \in I$ be its open covering by affine open subsets.
We define a partial order on $I$ as follows: $i < j$ if $U_i \supset U_j$.
Then the homomorphism $\phi_{ji}^X: R_i \to R_j$ is defined by the restrictions of functions.
Since $U_i$ and $U_j$ are birationally equivalent, the condition (3) holds.

\begin{Defn}
An {\em NC deformation} of an NC scheme $X = \{R_i\}_{i \in I}$ over a (commutative) local ring $(B, M)$ 
is a pair $(\mathcal X, \alpha)$ consisting of 
an NC scheme $\mathcal X = \{\mathcal R_i\}_{i \in I}$ such that the algebras $\mathcal R_i$ are flat $B$-algebras for all $i$, 
and isomorphisms $\alpha_i: B/M \otimes_B \mathcal R_i \cong R_i$ for all $i$, 
which are compatible with the gluing homomorphisms $\phi_{ji}^{\mathcal X}$ and $\phi_{ji}^X$.
\end{Defn}

\begin{Defn}
A {\em quasi-coherent} module $M = (M_i, \psi_{ji}^M)$ on an NC scheme $X = \{R_i\}$ consists of 
right $R_i$-modules $M_i$ and  
gluing isomorphisms 
\[
\psi_{ji}^M: M_i \otimes_{R_i} R_j \to M_j
\]
as $R_j$-modules for $i < j$ which satisfy the 
cocycle condition $\psi^M_{kj} \circ (\psi^M_{ji} \otimes_{R_j} \text{Id}_{R_k})= \psi^M_{ki}$ for $i < j < k$.
If the $M_i$ are finitely generated, then $M$ is called {\em coherent}.

A homomorphism of quasi-coherent modules $h: M \to N$
is a collection of $R_i$-homomorphisms $h_i: M_i \to N_i$
which are compatible with gluing isomorphisms $\psi_{ji}^M$ and $\psi_{ji}^N$.
\end{Defn}

\begin{Lem}
(1) Let $h: M \to N$ be a homomorphism of quasi-coherent modules.
Then $\text{Ker}(h) = (\text{Ker}(h_i))$ and $\text{Coker}(h) = (\text{Coker}(h_i))$ are also quasi-coherent modules.

(2) The category of quasi-coherent modules $\text{Qcoh}(X)$ is an abelian category, and  
the category of coherent modules $\text{coh}(X)$ is an abelian subcategory. 
\end{Lem}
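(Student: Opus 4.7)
The plan is to reduce the entire statement to standard module-theoretic facts by exploiting the flatness condition (2) in the definition of NC scheme, which makes $-\otimes_{R_i} R_j$ exact.

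For (1), given $h: M \to N$, set $K_i = \text{Ker}(h_i)$ and $C_i = \text{Coker}(h_i)$ as right $R_i$-modules. Tensoring the exact sequence $0 \to K_i \to M_i \to N_i \to C_i \to 0$ on the right with $R_j$ over $R_i$ preserves exactness by the left $R_i$-flatness of $R_j$, so $K_i \otimes_{R_i} R_j$ and $C_i \otimes_{R_i} R_j$ are canonically the kernel and cokernel of $h_i \otimes \text{Id}_{R_j}$. Composing with the given gluing isomorphisms $\psi^M_{ji}$ and $\psi^N_{ji}$ identifies $h_i \otimes \text{Id}_{R_j}$ with $h_j$, which produces the desired isomorphisms $\psi^K_{ji} : K_i \otimes_{R_i} R_j \to K_j$ and $\psi^C_{ji} : C_i \otimes_{R_i} R_j \to C_j$. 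The cocycle conditions for $\psi^K$ and $\psi^C$ follow by chasing the naturality of the kernel/cokernel functor under the associativity isomorphism $(-\otimes_{R_i} R_j)\otimes_{R_j} R_k \cong -\otimes_{R_i} R_k$, together with the already-known cocycle conditions for $\psi^M$ and $\psi^N$.

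For (2), the zero object $(0_i)$ with the obvious gluing, and component-wise direct sums $(M_i \oplus N_i)$ with gluing $\psi^M_{ji}\oplus \psi^N_{ji}$, are quasi-coherent because tensor product commutes with the zero module and with finite direct sums. By (1), every morphism has a kernel and a cokernel. For the remaining abelian axiom — that the canonical map from coimage to image is an isomorphism — one first observes that $h$ is mono (resp.\ epi) in $\text{Qcoh}(X)$ if and only if each $h_i$ is mono (resp.\ epi) in $\text{mod-}R_i$, since by (1) the kernel (resp.\ cokernel) vanishes exactly when each component does. The remaining axioms then descend component-wise from the abelian structure on $\text{mod-}R_i$. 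For the subcategory $\text{coh}(X)$, cokernels stay finitely generated because quotients of finitely generated modules are, and kernels stay finitely generated because the algebras $R_i$ occurring in our construction will be right Noetherian.

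The only delicate point is the cocycle verification for $\psi^K$ and $\psi^C$: one must combine the naturality of kernel/cokernel in the variable $R_k$ with the associativity of tensor product and the cocycle relations for $M$ and $N$. This is purely diagram-chasing bookkeeping, and no idea beyond exactness of flat base change is needed.
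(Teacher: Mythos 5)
Your proof is correct and takes the same approach as the paper's (which is considerably terser): both hinge on the left $R_i$-flatness of $R_j$ to conclude that $\mathrm{Ker}(h_i)\otimes_{R_i}R_j \cong \mathrm{Ker}(h_j)$ and $\mathrm{Coker}(h_i)\otimes_{R_i}R_j \cong \mathrm{Coker}(h_j)$, with the abelian axioms then descending componentwise. You also correctly flag the Noetherian hypothesis on the $R_i$ needed for $\text{coh}(X)$ to be closed under kernels, a point the paper leaves implicit in this lemma and only verifies later for the specific $A_n$ rings.
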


\begin{proof}
By the flatness of $\phi_{ji}: R_i \to R_j$, we have isomorphisms
$\text{Ker}(h_i) \otimes_{R_i} R_j \cong \text{Ker}(h_j)$ and 
$\text{Coker}(h_i) \otimes_{R_i} R_j \cong \text{Coker}(h_j)$.
Therefore we have our assertions.
\end{proof}

\section{NC deformation of CCR and NCCR for $A_n$}

Let $Y$ be a $2$-dimensional singularity of type $A_n$.
$Y$ has a commutative crepant resolution (CCR) $X$ and a non-commutative crepant resolution (NCCR) $S$.
We will describe NC deformations $\mathcal X$ and $\mathcal S$ of $X$ and $S$, respectively.

We consider a quotient variety $Y = (\text{Spec }k[u,v])/G$, where a cyclic group $G = \mathbf Z/(n+1)$ 
acts on an affine plane by $(u,v) \mapsto (\zeta u, \zeta^{-1}v)$ with
$\zeta = e^{2\pi i/(n+1)}$.
$Y$ is a toric variety with an isolated singularity of type $A_n$, and the minimal resolution $f: X \to Y$ is a toric morphism.
$X$ is a union of affine planes $U_i = \text{Spec }R_i \cong k^2$ with $R_i = k[x_i,y_i]$ ($0 \le i \le n$),
which are glued together by the following transformation rules:
\[
x_{i+1} = x_i^2y_i, \,\,\, y_{i+1} = x_i^{-1}.
\]

We deform these affine planes to non-commutative planes, and then glue them to obtain an NC variety $\mathcal X$.
The sheaf of bivectors $\bigwedge^2 T_X \cong \mathcal O_X$ has a nowhere vanishing section,
and $\mathcal X$ contains the corresponding purely non-commutative deformation.
Since $H^2(X, \mathcal O_X) \cong 0$, there is no twisted deformation (cf. \cite{NCdef}).

We define a poset 
\[
I = \{0, 1, \dots, n\} \cup \{(0,1), (1,2), \dots, (n-1,n)\}
\]
with the order $i, i+1 < (i,i+1)$.
The corresponding affine open subsets are the $U_i$ ($0 \le i \le n$) and the 
$U_{i,i+1} = \text{Spec }R_{i,i+1} \cong k^* \times k$ with $R_{i,i+1} = k[x_i,x_i^{-1},y_i]$ ($0 \le i < n$).
We consider only intersections $U_{i,i+1} = U_i \cap U_{i+1}$ and ignore other intersections.

We define deformed affine rings $\mathcal R_i$ by
\[
\mathcal R_i = k[t_0,\dots, t_n] \langle x_i,y_i \rangle/(x_iy_i - y_ix_i - t_0)
\]
where $t_0,\dots,t_n$ are deformation parameters, 
and define an NC variety $\mathcal X$ by gluing these NC rings
with the deformed transformation rules:
\[
x_{i+1} = x_i^2y_i + t_{i+1}x_i, \,\,\, y_{i+1} = x_i^{-1}
\]
for $0 \le i < n$.
The parameters $t_1, \dots, t_n$ correspond to the commutative deformations of $X$ constructed by changing the 
gluing, and $t_0$ corresponds to the purely non-commutative deformation.

More precisely, we define
\[
\begin{split}
&\mathcal R_{i,i+1} = k[t_0,\dots, t_n] \langle x_i, x_i^{-1}, y_i \rangle/(x_iy_i-y_ix_i-t_0) \\
&:= k[t_0,\dots, t_n] \langle x_i, y_i, y_{i+1} \rangle/(x_iy_i-y_ix_i-t_0, x_iy_{i+1}-1, y_{i+1}x_i - 1).
\end{split}
\]
The gluing is given by the natural homomorphisms
\[
\phi_i^+: \mathcal R_i \to \mathcal R_{i,i+1}, \,\,\,
\phi_i^-: \mathcal R_{i+1} \to \mathcal R_{i,i+1}.
\]

We note that the commutativity relations of the $x_i,y_i$ are compatible with gluing.
Indeed we have
\[
\begin{split}
&x_{i+1}y_{i+1}-y_{i+1}x_{i+1} = x_i^2y_ix_i^{-1} + t_{i+1} - (x_iy_i - t_{i+1}) \\
&= x_i(y_ix_i + t_0)x_i^{-1} - x_iy_i = t_0.
\end{split}
\]

\begin{Lem}
(1) $\mathcal R_i$ is Noetherian.

(2) $\phi_i^+$, $\phi_i^-$ are injective.

(3) $y_iy_{i+1} - y_{i+1}y_i = t_0y_{i+1}^2$.

(4) $\mathcal R_{i,i+1} = \bigcup_{l \ge 0} \mathcal R_i y_{i+1}^l$.

(5) Let $\mathcal M_i$ be a right $\mathcal R_i$-module.
Then
\[
\begin{split}
&\text{Ker}(\mathcal M_i \to \mathcal M_i \otimes_{\mathcal R_i} \mathcal R_{i,i+1}) 
= \mathcal M_{i, x_i\text{-tor}} := \{m \in \mathcal M_i \mid \exists l \ge 0, mx_i^l = 0\}, \\
&\mathcal M_i \otimes_{\mathcal R_i} \mathcal R_{i,i+1} 
= \bigcup_{l \ge 0} (\mathcal M_i/\mathcal M_{i, x_i\text{-tor}}) y_{i+1}^l.  
\end{split}
\]
 
(6) $\phi_i^+$, $\phi_i^-$ are flat.

(7) The birationality conditions hold:
\[
\mathcal R_{i,i+1} \otimes_{\mathcal R_i} \mathcal R_{i,i+1} \cong \mathcal R_{i,i+1}, \,\,\, 
\mathcal R_{i,i+1} \otimes_{\mathcal R_{i+1}} \mathcal R_{i,i+1} \cong \mathcal R_{i,i+1}.
\]
\end{Lem}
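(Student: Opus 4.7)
The plan is to identify $\mathcal{R}_i$ as an iterated Ore extension and $\mathcal{R}_{i,i+1}$ as its Ore localization at $\{x_i^\ell\}_{\ell \ge 0}$ (and symmetrically as the Ore localization of $\mathcal{R}_{i+1}$ at $\{y_{i+1}^\ell\}$). With that structural observation in hand, all seven assertions reduce to standard properties of Ore extensions/localizations together with one explicit commutator computation.

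First, I would observe that
\[
\mathcal{R}_i \;\cong\; k[t_0,\dots,t_n][y_i][x_i;\delta],
\]
an iterated Ore extension in which $\delta$ is the $k[t_0,\dots,t_n][y_i]$-derivation sending $y_i$ to $t_0$. The Hilbert basis theorem for skew polynomial rings yields part (1), and also produces the PBW basis $\{y_i^a x_i^b\}_{a,b\ge 0}$ over $k[t_0,\dots,t_n]$, which in particular shows that $x_i$ is a non-zero-divisor.

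Next I would verify that $\{x_i^\ell\}$ is a two-sided Ore set in $\mathcal{R}_i$. The relation $x_iy_i-y_ix_i=t_0$ gives inductively an identity of the form
\[
y_i^k x_i^m \;=\; \sum_{j=0}^{\min(k,m)} c_{j,k,m}\, x_i^{m-j}\, y_i^{k-j},\qquad c_{j,k,m}\in k[t_0,\dots,t_n],
\]
from which, for any $r\in\mathcal{R}_i$ of $y_i$-degree $K$ and any $\ell$, the product $r\,x_i^{\ell+K}$ is manifestly left-divisible by $x_i^\ell$; the left Ore condition is dual. Since $\mathcal{R}_{i,i+1}$ is by definition the ring obtained from $\mathcal{R}_i$ by freely adjoining a two-sided inverse of $x_i$, the universal property of Ore localization yields a canonical isomorphism $\mathcal{R}_{i,i+1} \cong \mathcal{R}_i[x_i^{-1}]$. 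A parallel argument—using $y_{i+1}=x_i^{-1}$ and inverting the deformed change of variables $x_{i+1}=x_i^2 y_i+t_{i+1}x_i$ to express $x_i,y_i$ as polynomials in $x_{i+1},y_{i+1}$—identifies $\mathcal{R}_{i,i+1}$ with $\mathcal{R}_{i+1}[y_{i+1}^{-1}]$.

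From these two identifications the remaining parts are routine. Part (2) follows since $x_i$ and $y_{i+1}$ are non-zero-divisors in their respective PBW algebras. Part (3) is the one-line computation
\[
y_i\, y_{i+1} - y_{i+1}\, y_i \;=\; x_i^{-1}(x_i y_i - y_i x_i)\, x_i^{-1} \;=\; t_0\, y_{i+1}^2,
\]
using centrality of $t_0$. Part (4) is the standard description of an Ore localization as the filtered union $\bigcup_\ell \mathcal{R}_i\, y_{i+1}^\ell$; tensoring $\mathcal{M}_i$ against this filtered union, and recognizing the kernel of $\mathcal{M}_i \to \mathcal{M}_i\otimes_{\mathcal{R}_i}\mathcal{R}_{i,i+1}$ as the $x_i$-torsion submodule, gives part (5). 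Part (6) is flatness of Ore localizations, and part (7) is the idempotence of localization applied on either side. The main obstacle is the bookkeeping needed to check the Ore condition and, more delicately, to match $\mathcal{R}_{i,i+1}$ with $\mathcal{R}_{i+1}[y_{i+1}^{-1}]$ under the nonlinear deformed gluing; once those identifications are in place, everything else is formal.
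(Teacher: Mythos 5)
Your proof is correct, and the core idea coincides with the paper's: both arguments rest on recognizing $\mathcal R_{i,i+1}$ as the noncommutative localization of $\mathcal R_i$ (resp.\ $\mathcal R_{i+1}$) at the multiplicative set $\{x_i^\ell\}$ (resp.\ $\{y_{i+1}^\ell\}$). The difference is one of packaging. You frame everything in terms of the Ore-extension/Ore-localization machinery: Hilbert basis theorem for skew polynomial rings for (1), the PBW basis and domain property for (2), and the universal property, filtered-union description, flatness, and idempotence of Ore localization for (4)--(7). The paper instead proves (1) and (2) by passing to the associated graded ring with respect to the filtration $\deg(x_i,y_i,t_j)=(1,1,0)$ (which is, of course, what makes the Hilbert basis theorem for skew polynomial rings work under the hood), proves the commutator identity (3) directly, and then derives (4) and (5) as hands-on corollaries of (3) --- (4) being exactly the verification that $\{x_i^\ell\}$ is an Ore set --- with (6) deduced from the explicit description in (5) of the kernel as the $x_i$-torsion submodule and (7) also from (5). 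Your version is shorter where you invoke standard theory, while the paper's is more self-contained; your formulation also makes explicit that the one genuinely delicate point is matching the presentation of $\mathcal R_{i,i+1}$ as a localization of $\mathcal R_{i+1}$ under the deformed transition $x_{i+1}=x_i^2y_i+t_{i+1}x_i$, $y_{i+1}=x_i^{-1}$ --- a compatibility that the paper checks separately just before the lemma (the computation $x_{i+1}y_{i+1}-y_{i+1}x_{i+1}=t_0$), and which you should make sure to carry out in the same spirit rather than merely asserting. Both routes are valid; yours buys brevity by citation, the paper's buys transparency by explicit filtration arguments.
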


\begin{proof}
(1) We define degrees on $\mathcal R_i$ by
$\text{deg}(x_i,y_i,t_j) = (1,1,0)$, and 
define a filtration by $F_d(\mathcal R_i) = \{r \in \mathcal R_i \mid \text{deg}(r) \le d\}$.
Then we have $\text{Gr}^F(\mathcal R_i) \cong k[t_0,\dots, t_n][x_i,y_i]$, hence $\mathcal R_i$ is Noetherian.

(2) Since $\text{Gr}(\phi_i^+): \text{Gr}(\mathcal R_i) \to \text{Gr}(\mathcal R_{i,i+1}) \cong k[t_0,\dots, t_n][x_i,y_i, x_i^{-1}]$ is injective, so is 
$\phi_i^+$.

(3) This follows from $x_i(y_iy_{i+1} - y_{i+1}y_i)x_i = x_iy_i - y_ix_i = t_0$.

(4) and (5) are corollaries of (3).

(6) Let $\mathcal M_i \to \mathcal N_i$ be an injective homomorphism of $\mathcal R_i$-modules.
Then $\mathcal M_i/\mathcal M_{i, x_i\text{-tor}} \to \mathcal N_i/\mathcal N_{i, x_i\text{-tor}}$ is injective,
hence so is 
$\mathcal M_i \otimes_{\mathcal R_i} \mathcal R_{i,i+1} \to \mathcal N_i \otimes_{\mathcal R_i} \mathcal R_{i,i+1}$.
Thus $\mathcal R_{i,i+1}$ is flat over $\mathcal R_i$.

(7) is a corollary of (5).
\end{proof}

Next we define an NCCR and its NC deformation for $A_n$ following \cite{CBH}.
We have $Y = \text{Spec }k[u,v]^G$, where $G = \mathbf Z/(n+1)$ acts by $(u,v) \mapsto (\zeta u, \zeta^{-1}v)$ with
$\zeta = e^{2\pi i/(n+1)}$.
A non-commutative crepant resolution (NCCR) of $Y$ is given by a {\em twisted group ring}
$S = k[u,v] \# G$, where the multiplication is defined as $ag \cdot bh = ag(b)gh$ for $a,b \in k[u,v]$ and $g,h \in G$.

According to \cite{CBH}, we define the \lq\lq versal'' NC deformation $\mathcal S$ of $S$ by
\[
\mathcal S = k[s_0,\dots, s_n]\langle u,v \rangle \# G/(uv - vu - \sum_{i=0}^n s_ig^i)
\]
where $s_0, \dots, s_n$ are deformation parameters and $g$ is a generator of $G$.
$s_1, \dots, s_n$ parametrize commutative deformations and $s_0$ corresponds to a purely non-commutative deformation.
Indeed, the ring of invariants $e \mathcal S e$ of $\mathcal S$ for $e = \sum_{i=0}^n g^i/(n+1)$
becomes commutative when we put $s_0 = 0$ (loc. cit.).

\section{$\text{Qcoh}(\mathcal X)$ for $A_n$}

We will investigate the structure of the category $\text{Qcoh}(\mathcal X)$ of quasi-coherent modules 
over the NC deformation $\mathcal X$ for the singular surface $Y$ of type $A_n$.

\subsection{\v Cech cohomology}

We will define \v Cech cohomology groups for NC schemes in this subsection.
They will be compared with the cohomology groups defined by injective resolutions later.
We consider here only $1$-dimensional special case.
The general case will be treated in a subsequent paper.  

We recall that a quasi-coherent module $\mathcal M$ on $\mathcal X$ is a collection of 
right $\mathcal R_i$-modules $\mathcal M_i$ and right $\mathcal R_{i,i+1}$-modules $\mathcal M_{i,i+1}$
together with isomorphisms
\[
\begin{split}
&\psi^{\mathcal M}_{i,+}: \mathcal M_i \otimes_{\mathcal R_i} \mathcal R_{i,i+1} \to \mathcal M_{i,i+1}, \\
&\psi^{\mathcal M}_{i,-}: \mathcal M_i \otimes_{\mathcal R_i} \mathcal R_{i-1,i} \to \mathcal M_{i-1,i}.
\end{split}
\]
A homomorphism $h: \mathcal M \to \mathcal N$ consists of $\mathcal R_i$-homomorphisms
$h_i: \mathcal M_i \to \mathcal N_i$ and $\mathcal R_{i,i+1}$-homomorphisms
$h_{i,i+1}: \mathcal M_{i,i+1} \to \mathcal N_{i,i+1}$ such that the following diagrams are commutative:
\[
\begin{CD}
\mathcal M_i \otimes_{\mathcal R_i} \mathcal R_{i,i+1} @>{\psi^{\mathcal M}_{i,+}}>> \mathcal M_{i,i+1} \\
@V{h_i \otimes_{\mathcal R_i} \mathcal R_{i,i+1}}VV @V{h_{i,i+1}}VV \\
\mathcal N_i \otimes_{\mathcal R_i} \mathcal R_{i,i+1} @>{\psi^{\mathcal N}_{i,+}}>> \mathcal N_{i,i+1}, \\
\mathcal M_i \otimes_{\mathcal R_i} \mathcal R_{i-1,i} @>{\psi^{\mathcal M}_{i,-}}>> \mathcal M_{i-1,i} \\
@V{h_i \otimes_{\mathcal R_i} \mathcal R_{i-1,i}}VV @V{h_{i-1,i}}VV \\
\mathcal N_i \otimes_{\mathcal R_i} \mathcal R_{i-1,i} @>{\psi^{\mathcal N}_{i,-}}>> \mathcal N_{i-1,i}.
\end{CD}
\]
The following is an immediate consequence of the definition:

\begin{Lem}
Let $\mathcal M,\mathcal N \in \text{Qcoh}(\mathcal X)$ be quasi-coherent modules.
Define a $k$-homomorphism 
\[
\begin{split}
\Delta: &\prod_{i=0}^n \text{Hom}_{\mathcal R_i}(\mathcal M_i, \mathcal N_i)
\to \prod_{i=1}^n \text{Hom}_{\mathcal R_{i-1,i}}(\mathcal M_{i-1,i}, \mathcal N_{i-1,i}) \\
&(h_0,\dots,h_n) \mapsto (g_{0,1}, \dots, g_{n-1,n})
\end{split}
\]
by 
\[
\begin{split}
g_{i-1,i} = &\psi^{\mathcal N}_{i-1,+} \circ (h_{i-1} \otimes_{\mathcal R_{i-1}} \mathcal R_{i-1,i}) \circ (\psi^{\mathcal M}_{i-1,+})^{-1}
\\ &- \psi^{\mathcal N}_{i,-} \circ (h_i \otimes_{\mathcal R_i} \mathcal R_{i-1,i}) \circ (\psi^{\mathcal M}_{i,-})^{-1}.
\end{split}
\]
Then
\[
\text{Hom}_{\mathcal X}(\mathcal M, \mathcal N) \cong \text{Ker}(\Delta).
\]
\end{Lem}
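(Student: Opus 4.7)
The plan is to prove the lemma by direct unpacking of the definition of a morphism in $\text{Qcoh}(\mathcal X)$, using the fact that the gluing maps $\psi^{\mathcal M}_{i,\pm}$ and $\psi^{\mathcal N}_{i,\pm}$ are isomorphisms, so that the datum on overlaps is forced by the datum on the charts.

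First I would define a map $\Phi: \text{Hom}_{\mathcal X}(\mathcal M, \mathcal N) \to \text{Ker}(\Delta)$ by sending $h = (h_i, h_{i-1,i})$ to the tuple $(h_0, \dots, h_n)$. To check that this lands in $\text{Ker}(\Delta)$, I would read off from the two commutative squares in the definition of a morphism that $h_{i-1,i}$ equals both $\psi^{\mathcal N}_{i-1,+} \circ (h_{i-1} \otimes_{\mathcal R_{i-1}} \mathcal R_{i-1,i}) \circ (\psi^{\mathcal M}_{i-1,+})^{-1}$ (from the first square indexed by $i-1$) and $\psi^{\mathcal N}_{i,-} \circ (h_i \otimes_{\mathcal R_i} \mathcal R_{i-1,i}) \circ (\psi^{\mathcal M}_{i,-})^{-1}$ (from the second square indexed by $i$). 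Subtracting gives precisely the $i{-}1,i$ component of $\Delta(h_0,\dots,h_n)$, which therefore vanishes.

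Conversely, I would define $\Psi: \text{Ker}(\Delta) \to \text{Hom}_{\mathcal X}(\mathcal M, \mathcal N)$ by taking a tuple $(h_0,\dots,h_n) \in \text{Ker}(\Delta)$ and setting $h_{i-1,i}$ to be the common value of the two expressions above (equal because the tuple lies in $\text{Ker}(\Delta)$); this is a well-defined $\mathcal R_{i-1,i}$-homomorphism since $\psi^{\mathcal M}_{i-1,+}$ and $\psi^{\mathcal M}_{i,-}$ are $\mathcal R_{i-1,i}$-linear isomorphisms and the tensor products of the $h_j$ are $\mathcal R_{i-1,i}$-linear by base change. The compatibility squares then commute by construction.

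Finally I would verify that $\Phi$ and $\Psi$ are mutually inverse: $\Phi \circ \Psi$ is the identity since $\Psi$ does not alter the chart components $h_i$, while $\Psi \circ \Phi$ is the identity because, as observed in the first step, the overlap components $h_{i-1,i}$ of any morphism are uniquely determined by the chart components $h_i$ via the $\psi$-isomorphisms. There is no real obstacle here; the content of the lemma is essentially bookkeeping, and the only point requiring care is keeping straight which $\psi$ lives on which chart and remembering that all of them are isomorphisms so that the inverses in the formula for $\Delta$ are legitimate.
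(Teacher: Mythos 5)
Your proof is correct and is precisely what the paper intends when it states that the lemma is ``an immediate consequence of the definition'': the overlap components $h_{i-1,i}$ of a morphism are forced by the chart components via the gluing isomorphisms, and the condition that the two forced values agree is exactly membership in $\text{Ker}(\Delta)$. Nothing further is needed.
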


\begin{Defn}
We define the first \v Cech cohomology group by
\[
\text{\v CHom}^1_{\mathcal X}(\mathcal M, \mathcal N) 
\cong \text{Coker}(\Delta).
\]
\end{Defn}

\begin{Defn}
A locally free module  $\mathcal M$ of rank $1$ on $\mathcal X$ is a coherent module 
such that $\mathcal M_i \cong \mathcal R_i$ as right $\mathcal R_i$-modules for all $i$.
A {\em divisorial sheaf} $\mathcal D = \mathcal R(\sum_{i=1}^n d_iD_i)$ is a locally free module
of rank $1$ defined by gluing homomorphisms
\[
\psi_{i,-}^{\mathcal D}(r) = x_{i-1}^{d_i}r, \,\,\, \psi_{i,+}^{\mathcal D}(r) = r
\]
for $1 \le i \le n$ and $0 \le i < n$, respectively.
\end{Defn}

The notation \lq\lq $D_i$'' does not have an independent meaning, but it comes from a prime divisor 
on the minimal resolution $X$
which intersects transversally to the exceptional curve $C_i$. 

\begin{Lem}\label{LES}
Let $\mathcal D$ be a divisorial sheaf and let
$0 \to \mathcal N \to \mathcal N' \to \mathcal N'' \to 0$ be an exact sequence of quasi-coherent 
$\mathcal R$-modules.
Then there is a long exact sequence
\[
\begin{split}
&0 \to \text{Hom}_{\mathcal X}(\mathcal D, \mathcal N) \to \text{Hom}_{\mathcal X}(\mathcal D, \mathcal N') 
\to \text{Hom}_{\mathcal X}(\mathcal D, \mathcal N'') \\
&\to \text{\v CHom}^1_{\mathcal X}(\mathcal D, \mathcal N) \to \text{\v CHom}^1_{\mathcal X}(\mathcal D, \mathcal N') \to 
\text{\v CHom}^1_{\mathcal X}(\mathcal D, \mathcal N'') \to 0.
\end{split}
\]
\end{Lem}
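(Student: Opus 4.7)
My plan is to apply the snake lemma to a suitable $3 \times 3$ diagram of $\Delta$-maps, one $\Delta$ for each of $\mathcal N$, $\mathcal N'$, $\mathcal N''$. For any quasi-coherent $\mathcal F$, write
\[
A(\mathcal F) = \prod_{i=0}^n \text{Hom}_{\mathcal R_i}(\mathcal D_i,\mathcal F_i), \qquad
B(\mathcal F) = \prod_{i=1}^n \text{Hom}_{\mathcal R_{i-1,i}}(\mathcal D_{i-1,i},\mathcal F_{i-1,i}),
\]
and let $\Delta_{\mathcal F}: A(\mathcal F) \to B(\mathcal F)$ be the map from the preceding lemma. By that lemma, $\text{Hom}_{\mathcal X}(\mathcal D,\mathcal F) = \ker(\Delta_{\mathcal F})$ and by definition $\text{\v CHom}^1_{\mathcal X}(\mathcal D,\mathcal F) = \text{coker}(\Delta_{\mathcal F})$, so it suffices to produce a short exact sequence of complexes $[A(\cdot) \to B(\cdot)]$ for $\mathcal N,\mathcal N',\mathcal N''$.

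First I would argue that the short exact sequence $0 \to \mathcal N \to \mathcal N' \to \mathcal N'' \to 0$ in $\text{Qcoh}(\mathcal X)$ yields, for every $i$, short exact sequences
\[
0 \to \mathcal N_i \to \mathcal N'_i \to \mathcal N''_i \to 0, \qquad
0 \to \mathcal N_{i-1,i} \to \mathcal N'_{i-1,i} \to \mathcal N''_{i-1,i} \to 0.
\]
The first is the definition of exactness in $\text{Qcoh}(\mathcal X)$ (kernel and cokernel are taken componentwise). The second follows by tensoring with $\mathcal R_{i-1,i}$ and using the flatness of $\phi_{i-1}^+$, together with the isomorphisms $\psi^{\mathcal F}_{i-1,+}: \mathcal F_{i-1}\otimes_{\mathcal R_{i-1}}\mathcal R_{i-1,i} \cong \mathcal F_{i-1,i}$ that are part of the quasi-coherent structure.

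Next, since $\mathcal D_i \cong \mathcal R_i$ is free of rank $1$ and $\mathcal D_{i-1,i} \cong \mathcal R_{i-1,i}$ is free of rank $1$, applying $\text{Hom}_{\mathcal R_i}(\mathcal D_i,-)$ and $\text{Hom}_{\mathcal R_{i-1,i}}(\mathcal D_{i-1,i},-)$ preserves short exactness. Taking products over $i$, I obtain exact rows
\[
0 \to A(\mathcal N) \to A(\mathcal N') \to A(\mathcal N'') \to 0, \qquad
0 \to B(\mathcal N) \to B(\mathcal N') \to B(\mathcal N'') \to 0,
\]
and the maps $\Delta_{\mathcal N}, \Delta_{\mathcal N'}, \Delta_{\mathcal N''}$ form vertical arrows between them. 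Commutativity of the resulting diagram reduces to the naturality of $\psi^{\mathcal F}_{i,\pm}$ in $\mathcal F$, which holds by the definition of a morphism in $\text{Qcoh}(\mathcal X)$.

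The main technical point—and the step I expect to be the most delicate—is the bookkeeping needed to verify this naturality across the $\pm$ indices simultaneously, because $\Delta$ involves a difference of two maps coming from both sides of each intersection $U_{i-1,i}$. Once this commutative diagram is in place with exact rows, the snake lemma immediately yields the six-term sequence
\[
0 \to \ker\Delta_{\mathcal N} \to \ker\Delta_{\mathcal N'} \to \ker\Delta_{\mathcal N''} \to \text{coker}\Delta_{\mathcal N} \to \text{coker}\Delta_{\mathcal N'} \to \text{coker}\Delta_{\mathcal N''} \to 0,
\]
which is exactly the desired long exact sequence upon translating back via the identifications above.
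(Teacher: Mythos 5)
Your proposal is correct and follows essentially the same route as the paper's own proof: identify $\text{Hom}_{\mathcal X}(\mathcal D,-)$ and $\text{\v CHom}^1_{\mathcal X}(\mathcal D,-)$ as the kernel and cokernel of the two-term $\Delta$-complex, note that freeness of $\mathcal D_i$ and $\mathcal D_{i-1,i}$ (together with flatness of the gluing maps for the $(i-1,i)$-row) gives exact rows, and apply the snake lemma. The only difference is that you spell out the flatness and naturality points that the paper leaves implicit.
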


\begin{proof}
Since $\mathcal D_i \cong \mathcal R_i$, we have an isomorphism as $k$-modules
$\text{Hom}_{\mathcal R_i}(\mathcal D_i, \mathcal N_i) \to \mathcal N_i$ given by
$h_i \mapsto h_i(1)$.
Hence we have the following commutative diagram of exact sequences:
\[
\begin{CD}
0 \to \prod_{i=0}^n (\mathcal D_i, \mathcal N_i) 
@>>> \prod_{i=0}^n (\mathcal D_i, \mathcal N'_i) 
@>>> \prod_{i=0}^n (\mathcal D_i, \mathcal N''_i) \to 0 \\
@VVV @VVV @VVV \\
0 \to \prod_{i=1}^n (\mathcal D_{i-1,i}, \mathcal N_{i-1,i})
@>>> \prod_{i=1}^n (\mathcal D_{i-1,i}, \mathcal N'_{i-1,i}) 
@>>> \prod_{i=1}^n (\mathcal D_{i-1,i}, \mathcal N''_{i-1,i}) \to 0
\end{CD}
\]
where we use abbreviations $(\mathcal D_i, \mathcal N_i) = \text{Hom}_{\mathcal R_i}(\mathcal D_i, \mathcal N_i)$ and
$(\mathcal D_{i-1,i}, \mathcal N_{i-1,i}) = 
\text{Hom}_{\mathcal R_{i-1,i}}(\mathcal D_{i-1,i}, \mathcal N_{i-1,i}))$, etc.
Then our assertion is a consequence of the snake lemma.
\end{proof}

\subsection{Generator}

We will prove in this subsection that $\text{Qcoh}(\mathcal X)$ is generated by an object 
$\bigoplus_{m_1,\dots, m_n \in \mathbf Z} \mathcal R(\sum_{i=1}^n m_iD_i)$.

\begin{Lem}\label{generator}
Let $\mathcal M$ be a non-zero quasi-coherent module.
Then there exist integers $d^0_i$ such that 
$\text{Hom}_{\mathcal X}(\mathcal R(- \sum_i d_iD_i), \mathcal M) \ne 0$ if $d_i \ge d^0_i$.
\end{Lem}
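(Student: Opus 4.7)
The plan is to pick a nonzero element $m_{i_0}$ in some chart $\mathcal M_{i_0}$ and then propagate it along the chain of overlaps, using the divisorial twist $\mathcal R(-\sum d_iD_i)$ to absorb whatever power of $x_{i-1}$ is required to clear denominators at each overlap. Since $\mathcal M_{i,i+1}\cong \mathcal M_i\otimes_{\mathcal R_i}\mathcal R_{i,i+1}$, the hypothesis $\mathcal M\ne 0$ forces $\mathcal M_{i_0}\ne 0$ for some $i_0$; fix $0\ne m_{i_0}\in\mathcal M_{i_0}$. Unwinding the definitions of a divisorial sheaf and of a morphism in $\mathrm{Qcoh}(\mathcal X)$, giving a homomorphism $h:\mathcal R(-\sum d_iD_i)\to\mathcal M$ amounts to giving elements $m_i=h_i(1)\in\mathcal M_i$ satisfying, in $\mathcal M_{i-1,i}$,
\[
\psi^{\mathcal M}_{i-1,+}(m_{i-1}\otimes 1)=\psi^{\mathcal M}_{i,-}(m_i\otimes 1)\cdot x_{i-1}^{d_i},\qquad 1\le i\le n.
\]

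Lemma 3.1(5) supplies the presentation $\mathcal M_{i-1,i}=\bigcup_l(\mathcal M_{i-1}/\mathcal M_{i-1,x_{i-1}\text{-tor}})\,y_i^l$, and the parallel argument from the $\mathcal R_i$ side (viewing $\mathcal R_{i-1,i}$ as $\mathcal R_i$ with $y_i=x_{i-1}^{-1}$ inverted) gives the mirror $\mathcal M_{i-1,i}=\bigcup_l(\mathcal M_i/\mathcal M_{i,y_i\text{-tor}})\,x_{i-1}^l$. Because $y_i$ and $x_{i-1}$ are genuine two-sided inverses inside $\mathcal R_{i-1,i}$, right multiplication by $x_{i-1}^{d_i}$ converts any finite expansion $\sum_l\bar r_l y_i^l$ into $\sum_l\bar r_l x_{i-1}^{d_i-l}$; once $d_i$ exceeds every exponent $l$ appearing, the result lies in the $l=0$ piece, i.e.\ inside the image of the canonical surjection $\mathcal M_{i-1}\twoheadrightarrow\mathcal M_{i-1}/\mathcal M_{i-1,x_{i-1}\text{-tor}}$.

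From $m_{i_0}$ I build the remaining $m_i$ by downward induction for $i<i_0$ and upward induction for $i>i_0$; each step fixes exactly one of the $d^0_i$. At the downward step, given $m_i$, I expand $\psi^{\mathcal M}_{i,-}(m_i\otimes 1)=\sum_l\bar r_l y_i^l$ in the first presentation, take $d^0_i$ to be the largest $l$ occurring, and for any $d_i\ge d^0_i$ choose $m_{i-1}$ to be any lift of $\sum_l\bar r_l x_{i-1}^{d_i-l}$ along $\mathcal M_{i-1}\twoheadrightarrow\mathcal M_{i-1}/\mathcal M_{i-1,x_{i-1}\text{-tor}}$; the gluing equation at index $i$ then holds by construction. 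The upward step is symmetric, using the mirror presentation to solve for $m_{i+1}$ from $m_i$. Since the $n$ gluing equations are indexed disjointly by the induction steps and $h_{i_0}(1)=m_{i_0}\ne 0$, the resulting $h$ is a nonzero morphism whenever $d_i\ge d^0_i$ for all $i$.

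The main technical point I expect to be fiddly is stating and proving the mirror of Lemma 3.1(5) from the $\mathcal R_i$ side cleanly, together with verifying that the non-commutative cancellation $y_i^l x_{i-1}^{d_i}=x_{i-1}^{d_i-l}$ is legitimate inside $\mathcal M_{i-1,i}$ itself (it relies on $y_i x_{i-1}=x_{i-1}y_i=1$ holding in $\mathcal R_{i-1,i}$, not merely in a formal localization). Once those presentations and cancellations are in place, the induction is routine lifting along surjective quotients by the appropriate torsion subsheaves.
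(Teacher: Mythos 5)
Your proposal is correct and follows essentially the same route as the paper's proof: fix a nonzero $m_{i_0}\in\mathcal M_{i_0}$, observe that giving $h$ amounts to prescribing $m_i=h_i(1)$ subject to the gluing equation $\psi^{\mathcal M}_{i-1,+}(m_{i-1}\otimes 1)=\psi^{\mathcal M}_{i,-}(m_i\otimes 1)\,x_{i-1}^{d_i}$, and then use Lemma~3.1(5) (and its mirror on the other side of each overlap) to choose $d^0_i$ large enough that the right-hand side is pulled into the image of $\mathcal M_{i-1}$ (resp.\ $\mathcal M_{i+1}$), lift, and propagate outward by induction. The paper likewise uses the mirror of 3.1(5) implicitly when it demands $\psi_{i,+}^{\mathcal M}(m_i)\,y_{i+1}^{d_{i+1}}\in\operatorname{Im}(\mathcal M_{i+1}\to\mathcal M_{i+1}\otimes\mathcal R_{i,i+1})$, so your flagging of that point as the technical detail to be made precise is consistent with the source, not a divergence from it.
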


\begin{proof}
Since $\mathcal M \not\cong 0$, there is $i$ such that $\mathcal M_i \not\cong 0$.
Then there is a non-zero homomorphism $h_i: \mathcal R_i \to \mathcal M_i$.
We will extend $h_i$ to neighboring $\mathcal M_j$'s inductively.
We set $h_i(1) = m_i$.

We take sufficiently large positive integers $d^0_i$ and $d^0_{i+1}$ such that, 
for $d_i \ge d^0_i$ and $d_{i+1} \ge d^0_{i+1}$,  
\[
\begin{split}
&\psi_{i,-}^{\mathcal M}(m_i)x_{i-1}^{d_i} = m_{i-1} \otimes 1 
\in \text{Im}(\mathcal M_{i-1} \to \mathcal M_{i-1} \otimes_{\mathcal R_{i-1}} \mathcal R_{i-1,i}), \\
&\psi_{i,+}^{\mathcal M}(m_i)y_{i+1}^{d_{i+1}} = m_{i+1} \otimes 1 
\in \text{Im}(\mathcal M_{i+1} \to \mathcal M_{i+1} \otimes_{\mathcal R_{i+1}} \mathcal R_{i,i+1}) 
\end{split}
\] 
where we consider only $\psi_{i,\pm}^{\mathcal M}$ when $i+1 \le n$ or $i-1 \ge 0$, respectively.
Then we define homomorphisms 
\[
h_*: \mathcal R_*(-d_iD_i-d_{i+1}D_{i+1}) \to \mathcal M_*
\] 
for $* = i-1, (i-1,i), i, (i,i+1), i+1$ by
\[
\begin{split}
&h_{i-1}(r) = m_{i-1}r, \,\,\, h_i(r) = m_ir, \,\,\, h_{i+1}(r) = m_{i+1}r, \\
&h_{i-1,i}(r) = \psi_{i,-}^{\mathcal M}(m_ix_{i-1}^{d_i}r), \,\,\, h_{i,i+1}(r) = \psi_{i,+}^{\mathcal M}(m_ir).
\end{split}
\]
We check the compatibilities:
\[
\begin{split}
&\psi_{i-1.+}^{\mathcal M}h_{i-1}(r) = \psi_{i-1,+}^{\mathcal M}(m_{i-1}r) = \psi_{i,-}^{\mathcal M}(m_ix_{i-1}^{d_i}r) \\
&= h_{i-1,i}(r) = h_{i-1,i}\psi_{i-1,+}^{\mathcal D}(r), \\
&\psi_{i,-}^{\mathcal M}h_i(r) = \psi_{i,-}^{\mathcal M}(m_ir) = \psi_{i,-}^{\mathcal M}(m_ix_{i-1}^{d_i}x_{i-1}^{-d_i}r) \\
&= h_{i-1,i}(x_{i-1}^{-d_i}r) = h_{i-1,i}\psi_{i,-}^{\mathcal D}(r), \\
&\psi_{i,+}^{\mathcal M}h_i(r) = \psi_{i,+}^{\mathcal M}(m_ir) = h_{i,i+1}(r) = h_{i,i+1}\psi_{i,+}^{\mathcal D}(r), \\
&\psi_{i+1,-}^{\mathcal M}h_{i+1}(r) = \psi_{i+1,-}^{\mathcal M}(m_{i+1}r) = \psi_{i,+}^{\mathcal M}(m_iy_{i+1}^{d_{i+1}}r) \\
&= h_{i,i+1}(x_i^{-d_{i+1}}r) = h_{i,i+1}\psi_{i+1,-}^{\mathcal D}(r),
\end{split}
\]
where we denote $\mathcal D = \mathcal R(-d_iD_i-d_{i+1}D_{i+1})$.
Thus $h_i$ is extended to $h_{i-1}$, $h_{i+1}$, etc. in a compatible way.
Here we note that we use the same symbol $h_i$ to denote homomorphisms 
$\mathcal R_i \to \mathcal M_i$ and $\mathcal R_i(-d_iD_i-d_{i+1}D_{i+1}) \to \mathcal M_i$ by identifying 
$\mathcal R_i = \mathcal R_i(-d_iD_i-d_{i+1}D_{i+1})$.
We repeat this process to obtain the $h_j$ for all $j$.
\end{proof}

\begin{Lem}\label{generator1}
Let $\mathcal M$ be a coherent $\mathcal R$-module.

(1) Then there exist integers $d^0_i$ such that 
$\text{\v CHom}^1_{\mathcal X}(\mathcal R(- \sum_i d_iD_i), \mathcal M) = 0$ if $d_i \ge d^0_i$.

(2) There is a surjective homomorphism from a finite direct sum of divisorial sheaves to $\mathcal M$.
\end{Lem}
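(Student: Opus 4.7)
The plan is to establish part~(2) directly from Lemma~\ref{generator}, then deduce part~(1) via the long exact sequence of Lemma~\ref{LES} applied to a presentation of $\mathcal{M}$ by divisorial sheaves, which reduces the vanishing to an explicit calculation in the divisorial case.

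For part~(2), coherence gives finitely many generators $\xi_{i,k}$ of each $\mathcal{M}_i$ over $\mathcal{R}_i$. For each pair $(i,k)$, the construction in the proof of Lemma~\ref{generator} produces a morphism $\mathcal{R}(-\sum_j d_j^{(i,k)} D_j)\to\mathcal{M}$ whose value at $1\in\mathcal{R}_i$ is $\xi_{i,k}$. The direct sum over the finitely many $(i,k)$ yields a morphism from a finite direct sum of divisorial sheaves whose $i$-th component hits every generator of $\mathcal{M}_i$, hence is chartwise surjective, hence surjective.

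For part~(1), use part~(2) to pick a surjection $p\colon \bigoplus_l \mathcal{R}(-\sum_j d_j^{(l)} D_j)\twoheadrightarrow \mathcal{M}$. Lemma~\ref{LES} applied to the short exact sequence $0\to \ker p \to \bigoplus_l \mathcal{R}(-\sum_j d_j^{(l)} D_j) \to \mathcal{M} \to 0$ ends in a surjection
\[
\text{\v CHom}^1_{\mathcal X}\Bigl(\mathcal{D},\, \bigoplus_l \mathcal{R}(-\textstyle\sum_j d_j^{(l)} D_j)\Bigr) \twoheadrightarrow \text{\v CHom}^1_{\mathcal X}(\mathcal{D}, \mathcal{M})
\]
where $\mathcal{D}=\mathcal{R}(-\sum_j d_j D_j)$. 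So it suffices to prove $\text{\v CHom}^1_{\mathcal X}\bigl(\mathcal{R}(-\sum d_i D_i),\, \mathcal{R}(-\sum d_i' D_i)\bigr)=0$ for $d_i$ sufficiently large relative to each $d_i^{(l)}$.

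This last vanishing is a direct computation. Identifying both $\mathcal{D}_i$ and $\mathcal{R}(-\sum d_j' D_j)_i$ with $\mathcal{R}_i$ via $h\mapsto h(1)$, unpacking the definitions gives $\Delta(a_0,\ldots,a_n)_{i-1,i}=a_{i-1}-x_{i-1}^{-d_i'}\,a_i\, x_{i-1}^{d_i}$, whose commutative leading order is $a_{i-1}-x_{i-1}^{d_i-d_i'}a_i$. Its surjectivity reduces to the decomposition $\mathcal{R}_{i-1,i}=\mathcal{R}_{i-1}+x_{i-1}^{d_i-d_i'}\mathcal{R}_i$, valid for $d_i\ge d_i'$ by a coordinate calculation in $(x_{i-1},y_{i-1})$ using that $y_{i-1}\in\mathcal{R}_i$. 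The main obstacle is bookkeeping for the non-commutative corrections: moving $x_{i-1}^{d_i}$ past $a_i$ produces $t_0$-dependent commutator terms, but these are strictly of lower filtered degree (for the filtration whose associated graded is the commutative ring $k[t_0,\dots,t_n][x_i,y_i]$), so the commutative decomposition lifts inductively on degree. Setting $d_i^0=\max_l d_i^{(l)}$ completes the argument.
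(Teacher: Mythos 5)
Your proof is correct, and it takes a genuinely different (and arguably cleaner) route than the paper's.

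The paper proves (1) and (2) \emph{simultaneously} by an intertwined induction: it builds an ascending chain of submodules $\mathcal M^t \subset \mathcal M$, at each step using Lemma~\ref{generator} on the quotient $\mathcal M/\mathcal M^t$ and then invoking the inductive vanishing $\text{\v CHom}^1(\mathcal R(-\sum e^t_i D_i),\mathcal M^t)=0$ to \emph{lift} the resulting morphism from $\mathcal M/\mathcal M^t$ back to $\mathcal M$. You avoid this lifting step entirely by observing that the construction in the proof of Lemma~\ref{generator} is explicit enough to manufacture a morphism $\mathcal R(-\sum d_j D_j)\to\mathcal M$ with prescribed value $h_i(1)=m_i$ for \emph{any} given $m_i\in\mathcal M_i$, so one can directly hit a finite generating set of each $\mathcal M_i$ and sum. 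That gives (2) unconditionally; then (1) is a one-line consequence of (2), Lemma~\ref{LES}, and the divisorial-sheaf base case. This decoupling is a real simplification, and both arguments bottom out on the same computation (the \v Cech vanishing for a pair of divisorial sheaves).

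One small caution in the divisorial base case: the claim ``surjectivity of $\Delta$ reduces to the decomposition $\mathcal R_{i-1,i}=\mathcal R_{i-1}+x_{i-1}^{d_i-d'_i}\mathcal R_i$'' is a slight overstatement. Because $a_i$ appears in both $g_{i-1,i}$ and $g_{i,i+1}$, one must also check that solving the chain sequentially (as the paper does, starting from $a_0=0$) does not produce incompatible constraints on the $a_i$. In fact it does not: tracking the bidegrees shows that the constraint from step $i$ (which pins down coefficients of $a_i$ with $y_i$-exponent exceeding twice the $x_i$-exponent, up to the shift by $d_i-d'_i$) and the constraint from step $i+1$ (pinning down those with $x_i$-exponent exceeding twice the $y_i$-exponent, up to the shift by $d_{i+1}-d'_{i+1}$) are disjoint once $d_i\ge d'_i$, so the decomposition is indeed the only obstruction. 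The paper's own proof glosses over exactly this point (it says ``take $d_i$ large enough'' at each step, which superficially looks $g$-dependent), so you are at essentially the same level of rigor; but it would be worth stating the disjointness of the constraints explicitly. Your treatment of the noncommutative corrections via the filtration whose associated graded is $k[t_0,\dots,t_n][x_i,y_i]$ is exactly right and matches the paper's filtration argument.
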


\begin{proof}
First we prove (1) in the case when $\mathcal M$ is itself a divisorial sheaf; 
we assume that $\mathcal M = \mathcal R(- \sum_i d'_iD_i)$ for some integers $d'_i$.
Then we have 
\[
\begin{split}
&\text{\v CHom}^1_{\mathcal X}(\mathcal R(- \sum_i d_iD_i), \mathcal R(- \sum_i d'_iD_i)) \\
&= \text{Coker}(\Delta: \prod_{i=0}^n \text{Hom}_{\mathcal R_i}(\mathcal R_i, \mathcal R_i)
\to \prod_{i=1}^n \text{Hom}_{\mathcal R_{i-1,i}}(\mathcal R_{i-1,i}, \mathcal R_{i-1,i})) \\
&= \text{Coker}(\Delta: \prod_{i=0}^n \mathcal R_i \to \prod_{i=1}^n \mathcal R_{i-1,i})
\end{split}
\]
since $\text{Hom}_{\mathcal R_i}(\mathcal R_i, \mathcal R_i) \cong \mathcal R_i$, etc., as $k$-modules, 
by sending $h_i: \mathcal R_i \to \mathcal R_i$ to $h_i(1)$.
The map $\Delta: (h_0, \dots, h_n) \mapsto (g_{0,1}, \dots, g_{n-1,n})$ is given by
\[
g_{i-1,i}(1) = h_{i-1}(1) - x_{i-1}^{-d'_i}h_i(1)x_{i-1}^{d_i}.
\]
Suppose that we are given the $g_{i-1,i}(1) \in \mathcal R_{i-1,i}$ for $1 \le i \le n$, and we look for 
the $h_i(1) \in \mathcal R_i$ for $0 \le i \le n$ which induce the $g_{i-1.i}$.
We proceed by induction on $i$.
We set $h_0(1) = 0$.
We take $d_1$ large enough so that 
$h_1(1) = - y_1^{-d'_1}g_{0,1}(1)y_1^{d_1} \in \mathcal R_1$.
Then we take $d_2$ large enough so that
$h_2(1) = - y_2^{-d'_2}(g_{1,2}(1) - h_1(1))y_2^{d_2} \in \mathcal R_2$.
In this way, we determine the $d_i$ and the $h_i$ to obtain our first claim.

\vskip 1pc

Now we take any coherent module $\mathcal M$ and prove (1) and (2) simultaneously.
We will construct a strictly increasing sequence of submodules $\{\mathcal M^t\}$ of $\mathcal M$ and 
a weakly increasing sequence of integers $\{d^t_i\}$ and $\{e^t_i\}$ such that the following hold:

(1t) $\text{\v CHom}^1_{\mathcal X}(\mathcal R(- \sum_i d_iD_i), \mathcal M^t) = 0$ if $d_i \ge d^t_i$, 

(2t) There are surjective homomorphisms
$\bigoplus_{j=1}^t \mathcal R(- \sum_i e^j_iD_i) \to \mathcal M^t$. 

\noindent
Since the $\mathcal R_i$ are Noetherian, there is a $t$ such that $\mathcal M^t = \mathcal M$, which is our claim.

We let $M^0 = 0$.
Assuming that we have already $\mathcal M^t$ and the $d^t_i$, we will construct $\mathcal M^{t+1}$ and the $d^{t+1}_i$.
We take large integers $e^t_i \ge d^t_i$ such that there is a non-zero homomorphism
$h'_t: \mathcal R(- \sum_i e^t_iD_i) \to \mathcal M/\mathcal M^t$.
By the induction assumption, we have 
$\text{\v CHom}^1_{\mathcal X}(\mathcal R(- \sum_i e^t_iD_i), \mathcal M^t) = 0$.
Hence there is a homomorphism 
$h_t: \mathcal R(- \sum_i e^t_iD_i) \to \mathcal M$ which induces $h'_t$.
We define $\mathcal M^{t+1}$ by $\text{Im}(h_t) = \mathcal M^{t+1}/\mathcal M^t$.
Then we have (2).

We take integers $d^{t+1}_i \ge e^t_i$ such that 
$\text{\v CHom}^1_{\mathcal X}(\mathcal R(- \sum_i d_iD_i), \mathcal R(- \sum_i e^t_iD_i)) = 0$ if $d_i \ge d^{t+1}_i$.
Then by Lemma \ref{LES}, we have 
$\text{\v CHom}^1_{\mathcal X}(\mathcal R(- \sum_i d_iD_i), \text{Im}(h_t)) = 0$, hence 
$\text{\v CHom}^1_{\mathcal X}(\mathcal R(- \sum_i d_iD_i), \mathcal M^{t+1}) = 0$ for $d_i \ge d^{t+1}_i$.
This is (1).
\end{proof}

\begin{Cor}
Any object in $\text{coh}(\mathcal R)$ has a locally free resolution whose terms are direct sums of 
divisorial sheaves.
\end{Cor}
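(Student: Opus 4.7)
The plan is to build the resolution step by step by iteratively applying Lemma \ref{generator1}(2), using Noetherianness of the local rings $\mathcal R_i$ to control the kernels at each stage.

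First I would start with a coherent module $\mathcal M$ and invoke Lemma \ref{generator1}(2) to produce a surjection
\[
\mathcal E_0 = \bigoplus_{j=1}^{t_0} \mathcal R(-\textstyle\sum_i e^{0,j}_i D_i) \twoheadrightarrow \mathcal M
\]
from a finite direct sum of divisorial sheaves. Let $\mathcal K_0$ denote its kernel, formed componentwise as $(\mathcal K_0)_i = \text{Ker}(\mathcal E_{0,i} \to \mathcal M_i)$ and $(\mathcal K_0)_{i-1,i} = \text{Ker}(\mathcal E_{0,i-1,i} \to \mathcal M_{i-1,i})$. By the kernel/cokernel lemma in \S 2 (which uses flatness of $\phi_i^\pm$ to make the gluing compatible), $\mathcal K_0$ is a well-defined quasi-coherent module.

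Next I would check that $\mathcal K_0$ is actually coherent. Each component $(\mathcal K_0)_i$ is an $\mathcal R_i$-submodule of the finitely generated module $\mathcal E_{0,i}$, and by Lemma 3.2(1) the ring $\mathcal R_i$ is Noetherian, so $(\mathcal K_0)_i$ is finitely generated; the same holds for $(\mathcal K_0)_{i-1,i}$ over $\mathcal R_{i-1,i}$ (which is a localization of a Noetherian ring, Noetherian by an analogous filtration argument, or simply since $\mathcal R_{i-1,i} = \mathcal R_{i-1}[y_i]/(x_{i-1}y_i-1)$ is a quotient of a polynomial extension). Therefore $\mathcal K_0 \in \text{coh}(\mathcal X)$.

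I would then apply Lemma \ref{generator1}(2) to $\mathcal K_0$ to get a surjection $\mathcal E_1 \twoheadrightarrow \mathcal K_0$ with $\mathcal E_1$ a finite direct sum of divisorial sheaves, set $\mathcal K_1 = \text{Ker}(\mathcal E_1 \to \mathcal E_0)$, and iterate. This yields a (possibly infinite) exact sequence
\[
\cdots \to \mathcal E_2 \to \mathcal E_1 \to \mathcal E_0 \to \mathcal M \to 0
\]
whose terms are finite direct sums of divisorial sheaves, which are locally free by definition. The only point that could give trouble is verifying coherence of the successive kernels, but this is exactly taken care of by the Noetherian property of the $\mathcal R_i$ and $\mathcal R_{i-1,i}$; no finiteness in the length of the resolution is claimed, so there is no homological dimension obstacle to overcome.
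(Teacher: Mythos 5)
Your proposal is correct and takes essentially the same approach the paper intends: iterate Lemma \ref{generator1}(2) to get surjections from finite direct sums of divisorial sheaves, and use Noetherianness of the $\mathcal R_i$ (Lemma 3.2(1)) together with the kernel lemma in \S 2 to see that each successive kernel is again coherent. (A minor simplification: $(\mathcal K_0)_{i-1,i} \cong (\mathcal K_0)_i \otimes_{\mathcal R_i} \mathcal R_{i-1,i}$ by the gluing condition, so it is automatically finitely generated once $(\mathcal K_0)_i$ is; no separate Noetherianness argument for $\mathcal R_{i-1,i}$ is needed.)
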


The following is a special case of \cite{DVLL}:

\begin{Prop}
$\text{Qcoh}(\mathcal R)$ is a Grothendieck category (\cite{Grothendieck}).
\end{Prop}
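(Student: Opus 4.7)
The plan is to verify the three defining properties of a Grothendieck category beyond the abelianness already proved earlier: arbitrary (small) coproducts, exactness of filtered colimits (AB5), and the existence of a generator. Since the category has already been shown to be abelian and the $\mathcal R_i$ are Noetherian, the strategy is to compute all colimits componentwise and then appeal to the standard theory for right modules over associative rings.

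First, I would define arbitrary colimits componentwise: for a small diagram $\{\mathcal M^\alpha\}$ in $\text{Qcoh}(\mathcal X)$, set $(\text{colim}\,\mathcal M^\alpha)_i := \text{colim}\,\mathcal M^\alpha_i$ in $\text{mod-}\mathcal R_i$, and likewise on overlaps. The gluing isomorphisms are obtained as the colimit of the gluing isomorphisms $\psi^{\mathcal M^\alpha}_{i,\pm}$; this is legitimate because $-\otimes_{\mathcal R_i}\mathcal R_{i,i+1}$ is a left adjoint (to restriction along $\phi_i^+$) and hence commutes with all colimits, so the colimit of isomorphisms remains an isomorphism. The cocycle conditions pass to the colimit termwise. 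In particular, arbitrary direct sums $\bigoplus_\alpha \mathcal M^\alpha$ exist. For AB5, one uses that kernels and cokernels are already known to be computed componentwise (the lemma following the definition of $\text{Qcoh}$), so a sequence in $\text{Qcoh}(\mathcal X)$ is exact if and only if each componentwise sequence is exact; since filtered colimits are exact in each $\text{mod-}\mathcal R_i$ and $\text{mod-}\mathcal R_{i,i+1}$, exactness is preserved by filtered colimits in $\text{Qcoh}(\mathcal X)$.

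Next, I would exhibit a generator. The natural candidate, as the preceding subsection suggests, is
\[
\mathcal G \;:=\; \bigoplus_{(d_1,\dots,d_n) \in \mathbf Z^n} \mathcal R\Bigl(-\sum_{i=1}^n d_i D_i\Bigr),
\]
which exists by the coproduct construction above. Given any non-zero $\mathcal M \in \text{Qcoh}(\mathcal X)$, Lemma \ref{generator} supplies integers $d^0_i$ and a non-zero homomorphism from the summand $\mathcal R(-\sum d_i D_i)$ (for any $d_i \ge d^0_i$) to $\mathcal M$, which extends to a non-zero homomorphism $\mathcal G \to \mathcal M$ by taking the zero map on all other summands. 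Equivalently, for any proper subobject $\mathcal N \subsetneq \mathcal M$ one applies the same argument to the non-zero quotient $\mathcal M/\mathcal N$ to separate it from zero via a map from $\mathcal G$, which is the usual formulation of being a generator.

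The only real subtlety is the componentwise description of colimits: one must check that the candidate gluing data actually satisfies the flatness and birationality conditions inherited from $\mathcal X$ (they do, since these are properties of the $\mathcal R_j$ over $\mathcal R_i$, not of the modules), and that the componentwise construction enjoys the correct universal property in $\text{Qcoh}(\mathcal X)$. Both are straightforward once one verifies that $-\otimes_{\mathcal R_i}\mathcal R_{i,i+1}$ commutes with the relevant colimits. Lemma \ref{generator} is what makes the generator step work, but the genuinely category-theoretic content of the proposition is the commuting-with-colimits verification, which is the only step requiring care. Everything else reduces to standard facts about module categories applied coordinatewise along $I$.
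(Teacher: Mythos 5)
Your proof is correct and essentially matches the paper's argument: both verify coproducts and AB5 by computing colimits componentwise, and both take the generator to be $\mathcal G = \bigoplus_{(m_1,\dots,m_n)\in\mathbf Z^n}\mathcal R(\sum m_iD_i)$, relying on Lemma \ref{generator} (the paper phrases the generator property as existence of surjections $\mathcal G^{(\Lambda)}\to\mathcal M$, you use the separate-subobjects formulation; these are equivalent). Your added observation that componentwise colimits glue correctly because $-\otimes_{\mathcal R_i}\mathcal R_{i,i+1}$ is a left adjoint, and that the flatness and birationality conditions are properties of the scheme $\mathcal X$ rather than the modules, usefully fills in a detail the paper leaves implicit, but does not change the route.
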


\begin{proof}
We check the conditions.
The coproduct $\bigoplus_{\lambda \in \Lambda} \mathcal M_{\lambda}$ and the direct limit
$\varinjlim_{\lambda}\mathcal M_{\lambda}$ always exist.
For exact sequences $0 \to \mathcal L_{\lambda} \to \mathcal M_{\lambda} \to \mathcal N_{\lambda} \to 0$, 
we have an exact sequence
\[
0 \to \varinjlim_{\lambda} \mathcal L_{\lambda} \to \varinjlim_{\lambda}\mathcal M_{\lambda} \to 
\varinjlim_{\lambda}\mathcal N_{\lambda} \to 0.
\]
Finally, let $\mathcal G = \bigoplus_{m_1,\dots, m_n \in \mathbf Z} \mathcal R(\sum_{i=1}^n m_iD_i)$.
Then, for any object $\mathcal M \in \text{Qcoh}(\mathcal R)$, there exists an index set $\Lambda$ with a surjective homomorphism
$\mathcal G^{(\Lambda)} \to \mathcal M$.
\end{proof}

\begin{Cor} 
$\text{Qcoh}(\mathcal R)$ has enough injectives.
\end{Cor}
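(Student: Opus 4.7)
The plan is to invoke the classical theorem of Grothendieck (from T\^ohoku) that every Grothendieck abelian category has enough injectives, and to apply it directly to the preceding proposition. In other words, all of the work has already been done in showing that $\text{Qcoh}(\mathcal R)$ satisfies the three defining properties: small coproducts exist, filtered colimits are exact, and there is a generator (namely $\mathcal G = \bigoplus_{m_1,\dots,m_n \in \mathbf Z} \mathcal R(\sum_{i=1}^n m_iD_i)$). Given these, the corollary is a one-line application of the general result, so the proof body should consist of little more than the citation together with a pointer back to the proposition.

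If a self-contained argument is preferred rather than a bare citation, one would reproduce the standard transfinite construction: start with an arbitrary $\mathcal M \in \text{Qcoh}(\mathcal R)$, and inductively build an ascending chain $\mathcal M = \mathcal M^0 \subset \mathcal M^1 \subset \cdots$ where at each successor step every morphism from a subobject of $\mathcal G$ (a Baer-type test class, which suffices because $\mathcal G$ is a generator) that lands in $\mathcal M^\alpha$ is extended to $\mathcal G$ by pushout into $\mathcal M^{\alpha+1}$; at limit stages one takes the colimit, which is well-behaved thanks to AB5. After sufficiently many steps (beyond the cardinality of the lattice of subobjects of $\mathcal G$) the resulting object is injective and contains $\mathcal M$.

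There is no genuine obstacle here, because the heavy lifting (verification of the Grothendieck axioms, and in particular exhibiting the generator via Lemma~\ref{generator}) was carried out in the previous subsection. The only decision is stylistic: whether to cite \cite{Grothendieck} as a black box, which matches the tone set by the cited proof of the proposition, or to spell out the transfinite injective-envelope construction. I would take the former option, keeping the proof to a single sentence.
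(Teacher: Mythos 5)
Your proposal is correct and matches the paper's reasoning exactly: the corollary is stated without proof precisely because it is the standard consequence, from \cite{Grothendieck}, of the preceding proposition that $\text{Qcoh}(\mathcal R)$ is a Grothendieck category. The one-sentence citation you favor is all that is needed.
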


\subsection{Comparison of two cohomology groups}

Now that $\text{Qcoh}(\mathcal R)$ is a Grothendieck category, we can define 
the right derived functor $R\text{Hom}_{\mathcal X}(\mathcal M, \mathcal N)$ by using an injective resolution of 
$\mathcal N$.
For a divisorial sheaf $\mathcal D$, 
we will describe the cohomology groups $\text{Hom}^p_{\mathcal X}(\mathcal D, \mathcal N) = 
H^p(R\text{Hom}_{\mathcal X}(\mathcal D, \mathcal N))$ 
by using the C\v ech cohomology. 

\begin{Lem}\label{inj1}
Let $\mathcal D$ be a divisorial sheaf and let
$\mathcal N$ be a quasi-coherent module.
Then there is an injective homomorphism 
\[
\text{Hom}^1_{\mathcal X}(\mathcal D, \mathcal N) \to \text{\v CHom}^1_{\mathcal X}(\mathcal D, \mathcal N).
\]
Moreover, for a homomorphism of quasi-coherent modules $\mathcal N \to \mathcal N'$, 
there is a commutative diagram
\[
\begin{CD}
\text{Hom}^1_{\mathcal X}(\mathcal D, \mathcal N) @>>> \text{\v CHom}^1_{\mathcal X}(\mathcal D, \mathcal N)  \\
@VVV @VVV \\
\text{Hom}^1_{\mathcal X}(\mathcal D, \mathcal N') @>>> \text{\v CHom}^1_{\mathcal X}(\mathcal D, \mathcal N').
\end{CD}
\]
\end{Lem}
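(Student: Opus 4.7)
\noindent\textit{Proof proposal.} The plan is to realize $\text{Hom}^1_{\mathcal X}(\mathcal D, \mathcal N)$ as a cokernel that appears simultaneously in two six-term exact sequences — one derived, one \v Cech — by embedding $\mathcal N$ into an injective. Since $\text{Qcoh}(\mathcal X)$ has enough injectives, I would choose an embedding $\mathcal N \hookrightarrow \mathcal I$ with $\mathcal I$ injective, set $\mathcal Q = \mathcal I/\mathcal N$, and work with the short exact sequence $0 \to \mathcal N \to \mathcal I \to \mathcal Q \to 0$.

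The derived long exact sequence of $R\text{Hom}_{\mathcal X}(\mathcal D, -)$, together with $\text{Hom}^1_{\mathcal X}(\mathcal D, \mathcal I) = 0$, gives an isomorphism
\[
\text{Hom}^1_{\mathcal X}(\mathcal D, \mathcal N) \;\cong\; \text{Coker}\bigl(\text{Hom}_{\mathcal X}(\mathcal D, \mathcal I) \to \text{Hom}_{\mathcal X}(\mathcal D, \mathcal Q)\bigr).
\]
Applying Lemma \ref{LES} to the same short exact sequence presents the same cokernel as
\[
\text{Ker}\bigl(\text{\v CHom}^1_{\mathcal X}(\mathcal D, \mathcal N) \to \text{\v CHom}^1_{\mathcal X}(\mathcal D, \mathcal I)\bigr),
\]
and composing with the inclusion into $\text{\v CHom}^1_{\mathcal X}(\mathcal D, \mathcal N)$ yields the desired injection.

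For the commutative square attached to a morphism $\mathcal N \to \mathcal N'$, I would pick embeddings $\mathcal N \hookrightarrow \mathcal I$ and $\mathcal N' \hookrightarrow \mathcal I'$ into injectives and lift the composite $\mathcal N \to \mathcal N' \to \mathcal I'$ to a morphism $\mathcal I \to \mathcal I'$ using injectivity of $\mathcal I'$. This produces a morphism of short exact sequences with respect to which both the derived and \v Cech six-term sequences are functorial, and the desired square commutes. Independence of the arrow from the choice of injective follows from the standard dominate-by-a-common-injective trick: two embeddings can be compared through a common injective $\mathcal J$ containing both, and the resulting comparison maps agree.

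The main obstacle I anticipate is precisely this intrinsic well-definedness of the comparison arrow, which requires a careful diagram chase to check that the cokernel-to-kernel identification does not depend on the auxiliary $\mathcal I$. Once that is in hand, injectivity of the resulting arrow is tautological from exactness of the \v Cech sequence at $\text{\v CHom}^1_{\mathcal X}(\mathcal D, \mathcal N)$, and functoriality in $\mathcal N$ follows directly from the lifting argument above.
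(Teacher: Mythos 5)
Your proof is correct and follows essentially the same approach as the paper: both embed $\mathcal N$ into an injective $\mathcal I$, use $\text{Hom}^1_{\mathcal X}(\mathcal D,\mathcal I)=0$ together with the derived and \v Cech six-term sequences of the resulting short exact sequence to identify $\text{Hom}^1_{\mathcal X}(\mathcal D,\mathcal N)$ with a cokernel that injects into $\text{\v CHom}^1_{\mathcal X}(\mathcal D,\mathcal N)$, and obtain the commutative square by lifting $\mathcal N\to\mathcal N'$ to a morphism of these short exact sequences. Your extra remark on independence of the choice of $\mathcal I$ (via a common dominating injective) is a reasonable refinement that the paper's proof silently elides.
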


\begin{proof}
We embed $\mathcal N$ into an exact sequence $0 \to \mathcal N \to \mathcal I \to \mathcal L \to 0$ where
$\mathcal I$ is an injective module.
Then we have a commutative diagram of exact sequences
\[
\begin{CD}
(\mathcal D, \mathcal I) @>>> (\mathcal D, \mathcal L) @>>>
(\mathcal D, \mathcal N)^1 @>>> 0 \\
@V=VV @V=VV  \\
(\mathcal D, \mathcal I) @>>> (\mathcal D, \mathcal L) @>>>
\check C(\mathcal D, \mathcal N)^1 @>>> \check C(\mathcal D, \mathcal I)^1 
\end{CD}
\]
because $(\mathcal D, \mathcal I)^1 = 0$, 
where we abbreviate $(\mathcal D, \mathcal N)^p = \text{Hom}^p_{\mathcal X}(\mathcal D, \mathcal N)$, 
$\check C(\mathcal D, \mathcal N)^p = \text{\v CHom}^1_{\mathcal X}(\mathcal D, \mathcal N)$, etc.  
Hence we obtain our injection.

From a commutative diagram
\[
\begin{CD}
0 @>>> \mathcal N @>>> \mathcal I @>>> \mathcal L @>>> 0 \\
@. @VVV @VVV @VVV \\
0 @>>> \mathcal N' @>>> \mathcal I' @>>> \mathcal L' @>>> 0
\end{CD}
\]
we obtain the second result.
\end{proof}

Let $\mathcal N$ be a quasi-coherent $\mathcal R$-module.
We define $\mathcal N^{(i-1,i)} \in \text{Qcoh}(\mathcal R)$ by 
\[
\begin{split}
&\mathcal N^{(i-1,i)}_{i-1} = \mathcal N^{(i-1,i)}_{i-1,i} = \mathcal N^{(i-1,i)}_i = \mathcal N_{i-1,i}, \\
&\mathcal N^{(i-1,i)}_j = \mathcal N^{(i-1,i)}_{k-1,k} = \mathcal N^{(i-1,i)}_{\eta} = \mathcal N_{\eta}, 
\end{split}
\]
for $j \ne i-1,i$ and $k \ne i$.
Indeed, by the birationality condition, we confirm that $\mathcal N^{(i-1,i)} \in \text{Qcoh}(\mathcal R)$.
Let $\mathcal N^e = \bigoplus_{i=1}^n \mathcal N^{(i-1,i)}$.

Let $\mathcal N^t$ be the kernel of the natural homomorphism
$\mathcal N \to \mathcal N^e$.
Then we have $\mathcal N^t_{i-1,i} = 0$ for all $i$.
Hence $\text{\v CHom}^1(\mathcal M, \mathcal N^t) = 0$ for any $\mathcal M$.

\begin{Lem}
$\text{\v CHom}^1_{\mathcal X}(\mathcal D, \mathcal N^e) = 0$.
\end{Lem}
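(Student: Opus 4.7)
By the additivity of the cokernel functor $\text{\v CHom}^1_{\mathcal X}(\mathcal D,-)$ in its second argument (it is the cokernel of a $k$-linear map between finite products of Hom-groups), the claim reduces to showing $\text{\v CHom}^1_{\mathcal X}(\mathcal D,\mathcal N^{(i-1,i)})=0$ for each $i \in \{1,\dots,n\}$ separately. Fix such an $i$. Using the identifications $\mathcal D_j \cong \mathcal R_j$ to rewrite $\text{Hom}_{\mathcal R_j}(\mathcal D_j,N) \cong N$ via $h \mapsto h(1)$, the Čech differential takes the shape
\[
\Delta \colon \prod_{j=0}^n \mathcal N^{(i-1,i)}_j \longrightarrow \prod_{k=1}^n \mathcal N^{(i-1,i)}_{k-1,k},
\]
whose $k$th component sends $(m_\bullet)$ to $\psi^{\mathcal N^{(i-1,i)}}_{k-1,+}(m_{k-1}) - \psi^{\mathcal N^{(i-1,i)}}_{k,-}(m_k x_{k-1}^{-d_k})$, and the task is to prove surjectivity of $\Delta$.

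The plan is to exploit the defining feature of $\mathcal N^{(i-1,i)}$: it agrees with $\mathcal N$ at every position except at $j\in\{i-1,i\}$ and at the intersection $(i-1,i)$, where all three modules are replaced by the common localization $\mathcal N_{i-1,i}$, with identity gluings between them. Given a target cocycle $(g_{k-1,k})_{k=1}^n$, I construct a preimage $(m_j)$ in two stages. First, solve the two "tails" $k \in \{1,\dots,i-1\}$ and $k \in \{i+1,\dots,n\}$ recursively, treating $m_{i-1}$ and $m_i$ as free parameters valued in the enlarged space $\mathcal N_{i-1,i}$; starting from $m_0=0$ on the left (resp.\ $m_n=0$ on the right), each equation determines the next $m_j$ uniquely, landing in $\mathcal N_j$ at intermediate indices $j \ne i-1,i$ and accumulating at the endpoint a residue that lies in $\mathcal N_{i-1,i}$ — precisely the localization that absorbs the denominators produced by the recursion. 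Second, adjust $m_{i-1}, m_i \in \mathcal N_{i-1,i}$ freely (their tail-contributions having already been accounted for) to satisfy the remaining central equation $g_{i-1,i} = m_{i-1} - m_i x_{i-1}^{-d_i}$, which is a single affine-linear equation inside $\mathcal N_{i-1,i}$ and is trivially solvable (e.g.\ by taking $m_i=0$ and setting $m_{i-1}=g_{i-1,i}$ plus the accumulated tail-residues).

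The main obstacle is verifying that the recursion genuinely closes inside $\mathcal N_j$ at each intermediate $j$ and that its final residue lies in $\mathcal N_{i-1,i}$ rather than in some strictly larger localization. This reduces to tracking how the birationality identifications among the localizations $\mathcal R_{k-1,k}$ propagate denominators along the chain, and amounts in essence to the observation that $\mathcal N_{i-1,i}$ — being the full localization of $\mathcal N_{i-1}$ at $x_{i-1}$, equivalently of $\mathcal N_i$ at $y_i$ — is exactly the universal absorber for the Čech obstruction supported at the central intersection $(i-1,i)$, which is in turn the very reason $\mathcal N^{(i-1,i)}$ was defined this way.
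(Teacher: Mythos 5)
Your proof has a genuine gap, and it stems from a misreading of the definition of $\mathcal N^{(i-1,i)}$.

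You write that $\mathcal N^{(i-1,i)}$ ``agrees with $\mathcal N$ at every position except at $j\in\{i-1,i\}$ and at the intersection $(i-1,i)$.'' That is not what the paper defines. The paper sets $\mathcal N^{(i-1,i)}_j = \mathcal N^{(i-1,i)}_{k-1,k} = \mathcal N_\eta$ for all $j \ne i-1,i$ and $k \ne i$, where $\mathcal N_\eta$ is the \emph{generic} localization (all $x_j,y_j$ inverted). This is not an aesthetic choice; it is forced by quasi-coherence. If one tried to keep $\mathcal N^{(i-1,i)}_j = \mathcal N_j$ at the outer positions, the gluing $\mathcal N^{(i-1,i)}_{i-1}\otimes_{\mathcal R_{i-1}}\mathcal R_{i-2,i-1} \to \mathcal N^{(i-1,i)}_{i-2,i-1}$ would have to identify $\mathcal N_{i-1,i}\otimes_{\mathcal R_{i-1}}\mathcal R_{i-2,i-1} \cong \mathcal N_{i-1}[x_{i-1}^{-1},y_{i-1}^{-1}] = \mathcal N_\eta$ with $\mathcal N_{i-2,i-1}$, which fails.

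This misreading makes your recursion break. Even granting the correct definition, your strategy of starting from $m_0 = 0$ (resp.\ $m_n = 0$) and solving inward toward the core fails at the last step: when you reach the equation $g_{i-2,i-1} = \rho(m_{i-2}) - \rho'(m_{i-1})$ at the intersection $(i-2,i-1)$, the accumulated datum $\rho(m_{i-2}) - g_{i-2,i-1}$ lives in $\mathcal N_\eta$, and you must lift it along the restriction $\rho'\colon \mathcal N^{(i-1,i)}_{i-1} = \mathcal N_{i-1,i} \to \mathcal N^{(i-1,i)}_{i-2,i-1} = \mathcal N_\eta$. But that map inverts $y_{i-1}$ and is \emph{not} surjective. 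Your sentence claiming $\mathcal N_{i-1,i}$ is ``exactly the universal absorber for the Čech obstruction'' has the inclusion the wrong way round: the absorber is $\mathcal N_\eta$, which is strictly larger, and nothing guarantees the residue falls back into $\mathcal N_{i-1,i}$. You flagged this exact point as ``the main obstacle,'' but then waved it away with a circular appeal to ``the very reason $\mathcal N^{(i-1,i)}$ was defined this way''; the definition in fact points in the opposite direction.

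The repair, which is what the paper's one-line proof is encoding, is to run the recursion \emph{outward from the core}, not inward from the ends. Set $m_i = 0$. The equation at $(i-1,i)$ then reads $\rho_{i-1}(m_{i-1}) = g_{i-1,i}$, and the relevant restriction $\rho_{i-1}\colon \mathcal N_{i-1,i}\to\mathcal N_{i-1,i}$ is a bijection, so $m_{i-1}$ is determined. Moving one step further out, the restriction $\mathcal N^{(i-1,i)}_{i-2}=\mathcal N_\eta \to \mathcal N^{(i-1,i)}_{i-2,i-1}=\mathcal N_\eta$ is again a bijection, and so on through the tails in both directions. At every step you use the restriction from the \emph{outer} index, which is always a bijection because the source has already been fully localized (either $\mathcal N_{i-1,i}$ at the core, or $\mathcal N_\eta$ outside). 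That is the content of the paper's assertion that the relevant $\text{Hom}_{\mathcal R_i}(\mathcal D_i, \mathcal N^e_i)\to\text{Hom}_{\mathcal R_{i-1,i}}(\mathcal D_{i-1,i},\mathcal N^e_{i-1,i})$ is surjective: the $j$-th summand of $\mathcal N^e$ contributes a bijection at the intersection you need, so the Čech differential is onto.

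Your reduction to individual summands via additivity of $\text{\v CHom}^1_{\mathcal X}(\mathcal D,-)$ is fine, and the observation that the core blocks of $\mathcal N^{(i-1,i)}$ are glued by identities is also correct; the error is only in the direction of the recursion and in the mistaken description of the outer layers.
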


\begin{proof}
This is because $\text{Hom}_{\mathcal R_i}(\mathcal D_i, \mathcal N^e_i) \to 
\text{Hom}_{\mathcal R_{i-1,i}}(\mathcal D_{i-1,i}, \mathcal N^e_{i-1,i}))$ is surjective for all $i$.
\end{proof}

\begin{Lem}\label{inj2}
Let $\mathcal D$ be a divisorial sheaf and let
$\mathcal N$ be a quasi-coherent module.
Then there is an injective homomorphism 
\[
\text{\v CHom}^1_{\mathcal X}(\mathcal D, \mathcal N) \to \text{Hom}^1_{\mathcal X}(\mathcal D, \mathcal N).
\]
Moreover, for a homomorphism of quasi-coherent modules $\mathcal N \to \mathcal N'$, 
there is a commutative diagram
\[
\begin{CD}
\text{\v CHom}^1_{\mathcal X}(\mathcal D, \mathcal N) @>>> \text{Hom}^1_{\mathcal X}(\mathcal D, \mathcal N) \\
@VVV @VVV \\
\text{\v CHom}^1_{\mathcal X}(\mathcal D, \mathcal N') @>>> \text{Hom}^1_{\mathcal X}(\mathcal D, \mathcal N').
\end{CD}
\]
\end{Lem}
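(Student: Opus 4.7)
The plan is to realize the map $\text{\v CHom}^1_{\mathcal X}(\mathcal D, \mathcal N) \to \text{Hom}^1_{\mathcal X}(\mathcal D, \mathcal N)$ explicitly via the Yoneda correspondence, sending a \v Cech $1$-cocycle to the extension class it determines. Starting from a representative $(g_{i-1,i})$ of a class in $\text{\v CHom}^1_{\mathcal X}(\mathcal D, \mathcal N)$, I would build a quasi-coherent module $\mathcal E_g$ by putting $(\mathcal E_g)_i = \mathcal N_i \oplus \mathcal D_i$ and $(\mathcal E_g)_{i-1,i} = \mathcal N_{i-1,i} \oplus \mathcal D_{i-1,i}$, with untwisted gluing on the $+$-side and $g$-twisted gluing on the $-$-side:
\[
\psi^{\mathcal E_g}_{i-1,+} = \begin{pmatrix} \psi^{\mathcal N}_{i-1,+} & 0 \\ 0 & \psi^{\mathcal D}_{i-1,+} \end{pmatrix}, \qquad
\psi^{\mathcal E_g}_{i,-} = \begin{pmatrix} \psi^{\mathcal N}_{i,-} & g_{i-1,i} \circ \psi^{\mathcal D}_{i,-} \\ 0 & \psi^{\mathcal D}_{i,-} \end{pmatrix}.
\]
Both maps are isomorphisms because they are upper triangular with invertible diagonal entries, so $\mathcal E_g \in \text{Qcoh}(\mathcal X)$. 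The inclusion $\mathcal N \hookrightarrow \mathcal E_g$ and projection $\mathcal E_g \twoheadrightarrow \mathcal D$ are morphisms in $\text{Qcoh}(\mathcal X)$ fitting into a short exact sequence, so $[\mathcal E_g] \in \text{Ext}^1_{\mathcal X}(\mathcal D, \mathcal N) = \text{Hom}^1_{\mathcal X}(\mathcal D, \mathcal N)$.

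The next step is to verify that $(g_{i-1,i}) \mapsto [\mathcal E_g]$ descends to $\text{\v CHom}^1_{\mathcal X}(\mathcal D, \mathcal N)$. If $g = \Delta(h_0,\dots,h_n)$ is a coboundary, then the maps $\sigma_i(d) = (-h_i(d), d)$, together with the analogous $\sigma_{i-1,i}$ on overlaps, are $\mathcal R_i$-linear and are compatible with $\psi^{\mathcal E_g}_{i,-}$ precisely because the defining formula for $\Delta$ agrees with $g_{i-1,i}$; hence they assemble into a splitting of $\mathcal E_g \to \mathcal D$, forcing $[\mathcal E_g] = 0$. Naturality in $\mathcal N$, i.e., the commutativity of the displayed square for $\mathcal N \to \mathcal N'$, follows because the pushout of $\mathcal E_g$ along $\mathcal N \to \mathcal N'$ is chart by chart isomorphic to $\mathcal E_{g'}$, where $g'$ is the image of $g$ under post-composition with $\mathcal N \to \mathcal N'$.

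For injectivity, suppose $[\mathcal E_g] = 0$ and fix a splitting $s: \mathcal D \to \mathcal E_g$. Each component $s_i$ is forced by $\mathcal R_i$-linearity and the shape of $(\mathcal E_g)_i$ to have the form $s_i(d) = (-h_i(d), d)$ for a unique $h_i \in \text{Hom}_{\mathcal R_i}(\mathcal D_i, \mathcal N_i)$, and compatibility of $s$ with $\psi^{\mathcal E_g}_{i,-}$ reads back exactly as $g_{i-1,i} = \Delta(h)_{i-1,i}$, so $(g)$ is a coboundary. The main obstacle is the bookkeeping involved in writing out the upper-triangular gluing and in checking, case by case, that $\sigma_i$ and $s_i$ are simultaneously compatible with both $\psi^{\mathcal E_g}_{i,-}$ and $\psi^{\mathcal E_g}_{i-1,+}$ while remaining right $\mathcal R_i$-homomorphisms; once these identifications are recorded, well-definedness on cohomology classes, naturality, and injectivity are all consequences of the same calculation read in the three directions.
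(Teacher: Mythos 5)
Your proof is correct, but it proceeds along a genuinely different route from the paper. The paper's argument never mentions extensions: it introduces the auxiliary module $\mathcal N^e=\bigoplus_i\mathcal N^{(i-1,i)}$, shows $\text{\v CHom}^1(\mathcal D,\mathcal N^e)=0$ by surjectivity of the restriction maps, then compares the long exact $\text{Hom}$-sequences associated to $0\to\mathcal N^f\to\mathcal N^e\to\mathcal L\to 0$ (with $\mathcal N^f=\mathcal N/\mathcal N^t$) and finally uses $\check C(\mathcal D,\mathcal N^t)^1=0$ to pass from $\mathcal N^f$ back to $\mathcal N$. Your argument instead realizes the map concretely as the \v Cech--Yoneda dictionary: a $1$-cochain $g$ gives an extension $\mathcal E_g$ of $\mathcal D$ by $\mathcal N$ with diagonal $+$-gluing and unipotent upper-triangular $-$-gluing, and a splitting of $\mathcal E_g$ is precisely a primitive $h$ with $\Delta(h)=g$. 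Both proofs are standard homological algebra; yours is more explicit (you can see exactly which extension each \v Cech class produces), while the paper's is more uniform and dovetails with the parallel diagram chase already set up for Lemma~\ref{inj1}.

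Two remarks you may want to make explicit. First, you are silently identifying Yoneda $\text{Ext}^1$ with the derived-functor group $\text{Hom}^1_{\mathcal X}$; this is legitimate because $\text{Qcoh}(\mathcal X)$ is a Grothendieck category with enough injectives, but it should be said. Second, the construction of $\mathcal E_g$ as a quasi-coherent module relies on the fact that the indexing poset here has only chains $i<(i-1,i)$ and $i<(i,i+1)$ and no triple overlaps, so the cocycle condition in the definition of quasi-coherent module is vacuous; this is what makes every $1$-cochain automatically eligible to twist the gluing of $\mathcal N\oplus\mathcal D$.
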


\begin{proof}
Let $\mathcal N^f = \mathcal N/\mathcal N^t$.
We have an exact sequence $0 \to \mathcal N^f \to \mathcal N^e \to \mathcal L \to 0$ for some $\mathcal L$.
Then we have a commutative diagram of exact sequences
\[
\begin{CD}
(\mathcal D, \mathcal N^e) @>>> (\mathcal D, \mathcal L) @>>>
\check C(\mathcal D, \mathcal N^f)^1 @>>> \check C(\mathcal D, \mathcal N^e)^1 \\
@V=VV @V=VV \\
(\mathcal D, \mathcal N^e) @>>> (\mathcal D, \mathcal L) @>>>
(\mathcal D, \mathcal N^f)^1 @>>> (\mathcal D, \mathcal N^e)^1.
\end{CD}
\]
Since $\check C(\mathcal D, \mathcal N^e)^1 = 0$, we obtain our injection 
$\text{\v CHom}^1(\mathcal D, \mathcal N^f) \to \text{Hom}^1(\mathcal D, \mathcal N^f)$.

From an exact sequence
\[
0 \to \mathcal N^t \to \mathcal N \to \mathcal N^f \to 0
\]
and $\check C(\mathcal D, \mathcal N^t)^1 = 0$, 
we deduce that $\check C(\mathcal D, \mathcal N)^1 \cong \check C(\mathcal D, \mathcal N^f)^1$.
Hence we have an injection $\check C(\mathcal D, \mathcal N)^1 \to (\mathcal D, \mathcal N)^1$.
The functoriality is obvious.
\end{proof}

\begin{Cor}
Let $\mathcal D$ be a divisorial sheaf and let
$\mathcal N$ be a quasi-coherent module.
Then there is a functorial isomorphism 
\[
\text{Hom}^1_{\mathcal X}(\mathcal D, \mathcal N) \to \text{\v CHom}^1_{\mathcal X}(\mathcal D, \mathcal N).
\]
\end{Cor}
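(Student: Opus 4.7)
The plan is to upgrade the functorial injection $\alpha : \text{Hom}^1_{\mathcal X}(\mathcal D, -) \to \text{\v CHom}^1_{\mathcal X}(\mathcal D, -)$ of Lemma \ref{inj1} to an isomorphism by showing it is surjective. Both Lemmas \ref{inj1} and \ref{inj2} already produce opposite-direction natural injections, but the real payoff from Lemma \ref{inj2} is not the arrow itself but what it says when applied to an injective object.

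First I would apply Lemma \ref{inj2} to an injective module $\mathcal I$: the resulting injection $\text{\v CHom}^1_{\mathcal X}(\mathcal D, \mathcal I) \hookrightarrow \text{Hom}^1_{\mathcal X}(\mathcal D, \mathcal I) = 0$ forces the vanishing $\text{\v CHom}^1_{\mathcal X}(\mathcal D, \mathcal I) = 0$. In other words, injective objects of $\text{Qcoh}(\mathcal X)$ are \v Cech-acyclic for divisorial sheaves. This is the crucial input that the Lemmas by themselves do not spell out.

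Next, embed $\mathcal N$ into an injective as $0 \to \mathcal N \to \mathcal I \to \mathcal L \to 0$. Applying Lemma \ref{LES} gives a Čech six-term exact sequence; the middle portion reads
\[
\text{Hom}_{\mathcal X}(\mathcal D, \mathcal I) \to \text{Hom}_{\mathcal X}(\mathcal D, \mathcal L) \xrightarrow{\delta_{\check C}} \text{\v CHom}^1_{\mathcal X}(\mathcal D, \mathcal N) \to \text{\v CHom}^1_{\mathcal X}(\mathcal D, \mathcal I) = 0,
\]
so the Čech connecting map $\delta_{\check C}$ is surjective. On the other side, the defining injective resolution of $\mathcal N$ identifies $\text{Hom}^1_{\mathcal X}(\mathcal D, \mathcal N)$ with $\text{coker}(\text{Hom}(\mathcal D, \mathcal I) \to \text{Hom}(\mathcal D, \mathcal L))$, and the commutative diagram used in the proof of Lemma \ref{inj1} shows that $\alpha$ is precisely the factorization of $\delta_{\check C}$ through this cokernel. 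Since $\delta_{\check C}$ is surjective, so is $\alpha$, and together with the injectivity from Lemma \ref{inj1} this yields the desired isomorphism. Functoriality in $\mathcal N$ is inherited from the naturality statement already contained in Lemma \ref{inj1}.

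The only potential obstacle is ensuring that the naturality of $\alpha$ is independent of the choice of injective resolution; this is routine, since any two injective embeddings of $\mathcal N$ admit a common refinement and the comparison induces the same map on cokernels. No circularity arises: Lemma \ref{inj2} is invoked only at the injective object $\mathcal I$, where it reduces to the vanishing statement, and the surjectivity of $\alpha$ at a general $\mathcal N$ is then extracted from Lemma \ref{LES} alone.
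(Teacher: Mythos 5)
Your proposal is correct and follows essentially the same route as the paper: you apply Lemma \ref{inj2} to an injective object $\mathcal I$ to deduce $\text{\v CHom}^1_{\mathcal X}(\mathcal D, \mathcal I) = 0$, and then compare the two cokernels arising from the short exact sequence $0 \to \mathcal N \to \mathcal I \to \mathcal L \to 0$ via the commutative diagram already built in the proof of Lemma \ref{inj1} (whose bottom row is the \v Cech sequence of Lemma \ref{LES}). The paper's proof is just a terser statement of exactly this argument.
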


\begin{proof}
By Lemma \ref{inj2}, we have $\text{\v CHom}^1(\mathcal D, \mathcal I) = 0$ for an injective module $\mathcal I$.
Then by the commutative diagram in the proof of Lemma \ref{inj1}, we obtain our isomorphism.
\end{proof}

\begin{Cor}
Let $\mathcal D$ be a divisorial sheaf and let $\mathcal N$ be quasi-coherent module.
Then $\text{Hom}^1_{\mathcal X}(\mathcal D, \mathcal N) \cong 0$ for $p \ne 0,1$.
\end{Cor}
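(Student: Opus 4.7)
I read the statement as asserting $\text{Hom}^p_{\mathcal X}(\mathcal D, \mathcal N) = 0$ for $p \ne 0, 1$ (the superscript on the left side is evidently a typographical slip for $p$). The negative degrees vanish trivially, so the content is the vanishing for $p \ge 2$. The plan is to combine dimension shifting against an injective resolution with the fact that the \v Cech complex built in $\S 4.1$ has only two nonzero terms, so \v Cech cohomology is automatically zero in degrees $\ge 2$, and then transfer this to the derived $\text{Hom}^p$ via the previous corollary.

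First I would embed $\mathcal N$ in a short exact sequence $0 \to \mathcal N \to \mathcal I \to \mathcal L \to 0$ with $\mathcal I$ injective, which exists because $\text{Qcoh}(\mathcal X)$ was shown to be Grothendieck and hence to have enough injectives. The associated derived long exact sequence, combined with $\text{Hom}^p_{\mathcal X}(\mathcal D, \mathcal I) = 0$ for all $p \ge 1$, yields the dimension shift
\[
\text{Hom}^p_{\mathcal X}(\mathcal D, \mathcal N) \cong \text{Hom}^{p-1}_{\mathcal X}(\mathcal D, \mathcal L) \qquad (p \ge 2).
\]
For $p = 2$ this reduces the problem to showing $\text{Hom}^1_{\mathcal X}(\mathcal D, \mathcal L) = 0$, and by the preceding corollary this is equivalent to $\text{\v CHom}^1_{\mathcal X}(\mathcal D, \mathcal L) = 0$.

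The key step is to exploit Lemma \ref{LES}: applied to $0 \to \mathcal N \to \mathcal I \to \mathcal L \to 0$ it gives a six-term exact sequence whose tail is
\[
\text{\v CHom}^1_{\mathcal X}(\mathcal D, \mathcal I) \to \text{\v CHom}^1_{\mathcal X}(\mathcal D, \mathcal L) \to 0.
\]
But Lemma \ref{inj2} exhibits $\text{\v CHom}^1_{\mathcal X}(\mathcal D, \mathcal I)$ as a subgroup of $\text{Hom}^1_{\mathcal X}(\mathcal D, \mathcal I) = 0$, so the left-hand group vanishes. Therefore $\text{\v CHom}^1_{\mathcal X}(\mathcal D, \mathcal L) = 0$, hence $\text{Hom}^1_{\mathcal X}(\mathcal D, \mathcal L) = 0$, and hence $\text{Hom}^2_{\mathcal X}(\mathcal D, \mathcal N) = 0$. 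For $p \ge 3$ I would proceed by induction: applying the same dimension-shift argument to $\mathcal L$ in place of $\mathcal N$ yields $\text{Hom}^p_{\mathcal X}(\mathcal D, \mathcal N) \cong \text{Hom}^{p-1}_{\mathcal X}(\mathcal D, \mathcal L)$, which vanishes by the induction hypothesis.

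I do not expect a real obstacle here; the whole statement is essentially a formal corollary of the two pieces assembled in \S 4.3 --- the isomorphism $\text{Hom}^1_{\mathcal X} \cong \text{\v CHom}^1_{\mathcal X}$ on divisorial sheaves and the six-term \v Cech exact sequence of Lemma \ref{LES} --- together with the observation that in the poset $I$ there are no chains of length three, so Cech degrees $\ge 2$ are absent a priori. The only point worth handling carefully is making sure the dimension shift is used with an \emph{injective} $\mathcal I$ (so that $\text{\v CHom}^1_{\mathcal X}(\mathcal D, \mathcal I) = 0$), rather than merely an effacer, because it is precisely this injectivity that drives the cascading vanishing.
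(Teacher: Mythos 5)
Your proposal is correct and follows essentially the same route as the paper: embed $\mathcal N$ in an injective $\mathcal I$ with cokernel $\mathcal L$, dimension-shift $\text{Hom}^p(\mathcal D, \mathcal N) \cong \text{Hom}^{p-1}(\mathcal D, \mathcal L)$, use Lemma \ref{LES} together with $\text{\v CHom}^1(\mathcal D, \mathcal I) = 0$ to kill $\text{\v CHom}^1(\mathcal D, \mathcal L) \cong \text{Hom}^1(\mathcal D, \mathcal L)$, and then iterate. The only cosmetic difference is that you invoke Lemma \ref{inj2} directly to see $\text{\v CHom}^1(\mathcal D, \mathcal I) = 0$, whereas the paper cites the comparison corollary; both amount to the same fact.
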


\begin{proof}
We consider an exact sequence
\[
0 \to \mathcal N \to \mathcal I \to \mathcal N' \to 0
\]
where $\mathcal I$ is an injective object.
Since 
\[
\text{\v CHom}^1(\mathcal D, \mathcal I) \cong \text{Hom}^1(\mathcal D, \mathcal I) \cong 0
\]
we deduce that 
\[
\text{Hom}^1(\mathcal D, \mathcal N') \cong \text{\v CHom}^1(\mathcal D, \mathcal N') \cong 0.
\]
Then $\text{Hom}^2(\mathcal D, \mathcal N) \cong 0$.
Since $\mathcal N$ is general, we also have $\text{Hom}^2(\mathcal D, \mathcal N') \cong 0$, hence
$\text{Hom}^3(\mathcal D, \mathcal N) \cong 0$.
If we repeat this process, we obtain our result.
\end{proof}

\begin{Lem}
Let $\mathcal D = \mathcal R(\sum d_iD_i)$ and $\mathcal D' = \mathcal R(\sum d'_iD_i)$ be divisorial sheaves.
Assume that $d'_i \ge d_i$ for all $i$ except possibly $d'_{i_0} = d_{i_0} - 1$ for one $i_0$.
Then $\text{\v CHom}^1_{\mathcal X}(\mathcal D, \mathcal D') = 0$.
\end{Lem}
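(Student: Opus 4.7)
My plan is to identify the \v{C}ech $1$-cohomology as the cokernel of an explicit coboundary map between direct sums of the affine rings $\mathcal R_i$ and $\mathcal R_{i-1,i}$, and then to prove surjectivity by constructing a preimage for every cocycle through a sweep across the chain of charts.

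First I would unwind the \v{C}ech complex using the isomorphisms $\mathcal D_i \cong \mathcal D'_i \cong \mathcal R_i$ of right $\mathcal R_i$-modules: each hom module identifies with $\mathcal R_i$ via $h \mapsto h(1)$, so that $\Delta$ becomes the map
\[
\Delta\colon \bigoplus_{i=0}^n \mathcal R_i \to \bigoplus_{i=1}^n \mathcal R_{i-1,i}, \qquad (h_0,\ldots,h_n) \mapsto \bigl(h_{i-1} - x_{i-1}^{d'_i}\, h_i\, x_{i-1}^{-d_i}\bigr)_{i=1}^n,
\]
using $\psi^{\mathcal D}_{i,-}(1) = x_{i-1}^{d_i}$, $\psi^{\mathcal D'}_{i,-}(1) = x_{i-1}^{d'_i}$, and $\psi^{\mathcal D}_{i,+} = \psi^{\mathcal D'}_{i,+} = \mathrm{id}$, in parallel with the calculation in Lemma \ref{generator1}(1). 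The lemma then reduces to the surjectivity of $\Delta$.

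Second, I would establish the local decomposition
\[
\mathcal R_{i-1,i} = \mathcal R_{i-1} + x_{i-1}^{d'_i}\, \mathcal R_i\, x_{i-1}^{-d_i} \qquad \text{whenever } e_i := d'_i - d_i \ge -1,
\]
by showing on the PBW basis $\{x_i^\alpha y_i^\gamma : \alpha \ge 0,\, \gamma \in \mathbf Z\}$ of $\mathcal R_{i-1,i}$ (obtained by localizing $\mathcal R_i$ at $y_i = x_{i-1}^{-1}$) that every basis monomial lies in one of the two summands — an elementary exponent inequality for $e_i \ge -1$. The non-commutative corrections from $x_i y_i - y_i x_i = t_0$ and from the conjugation $x_{i-1}^{d'_i}(\cdot)x_{i-1}^{-d_i}$ are strictly lower order in the filtration introduced in the proof of Lemma 3.1, so they can be absorbed inductively once the commutative decomposition on the associated graded is established.

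Finally, I would construct $(h_0,\ldots,h_n)$ from $(g_{0,1},\ldots,g_{n-1,n})$ by sweeping from left to right. At step $1$ the local decomposition splits $g_{0,1} = h_0 - x_0^{d'_1} h_1 x_0^{-d_1}$; the ambiguity of this splitting is a torsor for $\mathcal R_0 \cap x_0^{d'_1}\mathcal R_1 x_0^{-d_1}$, which via conjugation produces an explicit subspace $F_1 \subseteq \mathcal R_1$ of admissible modifications of $h_1$. At step $i+1$ the question reduces to whether $\mathcal R_{i,i+1} = F_i + x_i^{d'_{i+1}}\mathcal R_{i+1} x_i^{-d_{i+1}}$, with $F_{i+1}$ then determined from $F_i$ by a similar conjugation-restriction recursion. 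The main obstacle, and also the reason behind the precise hypothesis, is that a direct lattice-point check shows this equality fails exactly when two of the $e_j$'s equal $-1$: the unit $1 \in \mathcal R_{i,i+1}$ at the corner of the exponent lattice becomes unreachable by either $F_i$ or $x_i^{d'_{i+1}}\mathcal R_{i+1} x_i^{-d_{i+1}}$. The hypothesis that $d'_{i_0} = d_{i_0} - 1$ holds for at most one $i_0$ excludes this configuration, so the sweep runs all the way to $i = n$ and produces the desired $(h_0,\ldots,h_n)$, giving $\text{\v CHom}^1_{\mathcal X}(\mathcal D, \mathcal D') = 0$.
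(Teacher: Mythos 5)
Your proposal follows the same route as the paper: identify $\text{\v CHom}^1$ with $\mathrm{Coker}(\Delta)$ and then prove surjectivity of $\Delta$ by a chart-by-chart sweep in which one tracks the residual freedom left for each $h_i$, passing to the associated graded of the filtration from Lemma~3.1 to kill the non-commutative corrections and then inducting downward on total degree. The only structural difference from the paper is the direction of the sweep (you go $i=1\to n$; the paper goes $i=n\to 0$), which is immaterial.

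The gap is that the entire technical content of the lemma is contained in the sentence "a direct lattice-point check shows this equality fails exactly when two of the $e_j$'s equal $-1$", and you do not carry out that check. It is not a one-line corner argument. The admissible spaces $F_i$ are not stable cones: already $F_2$ is cut out by a slope-$\tfrac32$ condition and the slope of the dominant cutting hyperplane for $F_i$ drifts with $i$ (in the paper's bookkeeping it appears as the ratio $(n-i+1)/(n-i+2)$), so the recursion $F_{i+1} = \{\,\delta\in\mathcal R_{i+1} : x_i^{d'_{i+1}}\delta x_i^{-d_{i+1}}\in F_i\,\}$ must be solved globally, not just at the step adjacent to $i_0$. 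The second and larger omission is the propagation of the defect created by $e_{i_0}=-1$: once you pass $i_0$, every subsequent inequality defining $F_i$ is shifted by a fixed amount (in the paper it is the additive constant $n-i_0+1$), and you must verify that this shift never reaches the forbidden strip $l'<0$ for any later $i$ --- the paper explicitly observes that $d'_{i_0}=d_{i_0}-2$ would already produce $l'=-1$ and break the argument, which shows how tight the margin is. Finally, "absorbed inductively" for the non-commutative corrections needs the further observation (stated in the paper) that the lower-order terms drop in bidegree by $(-e,-e)$ and hence stay inside the relevant cone sectors; without that the induction on total degree does not close. In short: the reduction, the recursion for $F_i$, and the filtration device are right, but the lattice-cone verification with the sliding slopes and the propagated $-1$-shift --- which is where the hypothesis "at most one $i_0$" is actually used and which occupies most of the paper's proof --- is asserted rather than proved.
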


\begin{proof}
We have 
\[
\begin{split}
&\mathcal D_i \cong \mathcal D'_i \cong \mathcal R_i = k[t_0,\dots,t_n]\langle x_i,y_i \rangle/(x_iy_i - y_ix_i - t_0), \\
&\mathcal D_{i,i+1} \cong \mathcal D'_{i,i+1} \cong \mathcal R_{i,i+1} = k[t_0,\dots,t_n]\langle x_i,y_i,x_i^{-1} \rangle
/(x_iy_i - y_ix_i - t_0)
\end{split}
\]
with gluing
\[
\psi_{i,-}{\mathcal D}(r) = x_{i-1}^{d_i}r, \,\,\, \psi_{i,+}^{\mathcal D}(r) = r, \,\,\,
\psi_{i,-}^{\mathcal D'}(r) = x_{i-1}^{d'_i}r, \,\,\, \psi_{i,+}^{\mathcal D'}(r) = r
\]
and a transformation rule
\[
x_{i+1} = x_i^2y_i + t_{i+1}x_i, \,\,\, y_{i+1} = x_i^{-1}.
\]

Taking homomorphisms $h_{i-1,i}: \mathcal D_{i-1,i} \to \mathcal D'_{i-1,i}$ for $1 \le i \le n$, we look for 
homomorphisms $h_i: \mathcal D_i \to \mathcal D'_i$ for $0 \le i \le n$ which induce the $h_{i-1,i}$ in the sense that 
\[
h_{i-1,i} = h_{i-1} \otimes_{\mathcal R_{i-1}} \mathcal R_{i-1,i} - h_i \otimes_{\mathcal R_i} \mathcal R_{i-1,i} 
\]
for all $i$, that is 
\[
h_{i-1,i}(r) = h_{i-1}(r) - x_{i-1}^{d'_i}h_i(y_i^{d_i}r).
\]
We write
\[
h_i(r) = \sum_{l,m \ge 0} a^i_{l,m}x_i^ly_i^mr, \,\,\, h_{i-1,i}(r) = \sum_{l \ge 0, m \in \mathbf Z} b^i_{l,m}x_i^ly_i^mr
\]
for some $a^i_{l,m}, b^i_{l,m} \in k[t_0,\dots,t_n]$.
Then our condition becomes
\[
\sum_{l \ge 0, m \in \mathbf Z} b^i_{l,m}x_i^ly_i^m
= \sum_{l',m' \ge 0} a^{i-1}_{l',m'}x_{i-1}^{l'}y_{i-1}^{m'} - \sum_{l'',m'' \ge 0} a^i_{l'',m''}x_{i-1}^{d'_i}x_i^{l''}y_i^{m''+d_i}.
\]
We can write
\[
\begin{split}
&x_{i-1}^{l'}y_{i-1}^{m'} = x_i^{m'}y_i^{2m'-l'} + \text{ lower terms}, \\
&x_{i-1}^{d'_i}x_i^{l''}y_i^{m''+d_i} = x_i^{l''}y_i^{m''+d_i-d'_i} + \text{ lower terms},
\end{split}
\]
with respect to the bidegree defined by $\text{deg}(x_i,y_i) = (1,1)$ 
so that $x_i$ and $y_i$ become commutative modulo lower degree terms.
We note that the lower terms have smaller bidegrees by $(-e,-e)$ for some positive integers $e$. 

We will prove that all the $b^i_{l,m}$ are realized by some choices of the $a^{i-1}_{l',m'} $ and the $a^i_{l'',m''}$.
We proceed by the descending induction on the total degree $l+m$.

\vskip 1pc

We start with the case where $d'_i \ge d_i$ for all $i$.
We proceed be the descending induction on $i$.
First we consider the case $i = n$.
The terms $b^n_{l,m}$ with $l,m \ge 0$
are taken care of modulo lower terms by the $a^n_{l'',m''}$, while 
the remaining terms $b^n_{l,m}$ with $l \ge 0$ and $m < 0$ are by $a^{n-1}_{l',m'}$ 
with $l' > 2m' \ge 0$. 

We note that the lower degree terms are added to the given $b^n_{l,m}$ with lower degrees and treated by the 
$a^{n-1}_{l',m'} $ and the $a^n_{l'',m''}$ with lower degrees.
There are only finitely many non-zero $b^n_{l,m}$ initially, hence such $l,m$ are bounded.
Then $l',m',l'',m''$ are also bounded, even after taking care of lower degree terms.
Therefore we can obtain our claim by induction on the total degree.

Next we assume that the case $i = i_1$ is proved for some $i_1 \le n$ in the sense that 
the terms $b^{i_1}_{l,m}$
are taken care of modulo lower terms by the $a^{i_1}_{l'',m''}$ with $0 \le (n-i_1) l'' \le (n-i_1+1) m''$ and 
the $a^{i_1-1}_{l',m'}$ with $(n-i_1+1) l' > (n-i_1+2) m' \ge 0$. 
Thus we have still terms $a^{i_1-1}_{l',m'}$ with $0 \le (n-i_1+1) l' \le (n-i_1+2) m'$ to be used for the $b^{i_1-1}_{l,m}$. 

We will prove that all the $b^{i_1-1}_{l,m}$ are realized by some choices of the $a^{i_1-1}_{l'',m''}$ 
with $0 \le (n-i_1+1) l'' \le (n-i_1+2) m''$ and the $a^{i_1-2}_{l',m'}$ with $(n-i_1+2) l' > (n-i_1+3) m' \ge 0$. 
We proceed by the descending induction on the total degree $l+m$ again.
Indeed the $b^{i_1-1}_{l,m}$ with $0 \le (n-i_1+1) l \le (n-i_1+2) m$
are realized by the $a^{i_1-1}_{l'',m''}$ with $0 \le (n-i_1+1) l'' \le (n-i_1+2) m''$.
And the $b^{i_1-1}_{l,m}$ with $(n-i_1+1) l > (n-i_1+2) m$ and $l \ge 0$
are realized by the $a^{i_1-2}_{l',m'}$ with $(n-i_1+2) l' > (n-i_1+3) m' \ge 0$, 
because $(n - i_1 + 1) m' > (n - i_1 + 2)(2m' - l')$ is equivalent to $(n-i_1+2) l' > (n-i_1+3) m'$.
Therefore our assertion for the case where $d'_i \ge d_i$ for all $i$ is proved.

\vskip 1pc

Next we consider the case where $d'_i \ge d_i$ except $i = i_0$ and $d'_{i_0} = d_{i_0} - 1$.
The argument is similar, and we only check the difference from the previous argument.
We proceed by the descending induction on $i$.
We prove by the previous argument that the terms $b^i_{l,m}$ for $i > i_0$
are taken care of modulo lower terms by the $a^i_{l'',m''}$ with $0 \le (n-i) l'' \le (n-i+1) m''$ and 
the $a^{i-1}_{l',m'}$ with $(n-i+1) l' > (n-i+2) m' \ge 0$.

Then we consider the case $i = i_0$.
We can use the remaining terms $a^{i_0}_{l'',m''}$ with $0 \le (n-i_0) l'' \le (n-i_0+1) m''$ for the $b^{i_0}_{l,m}$. 
Then the terms $b^{i_0}_{l,m}$ with $0 \le (n-i_0) l \le (n-i_0+1) (m-1)$
are realized modulo lower terms by such $a^{i_0}_{l'',m''}$.
The remaining terms $b^{i_0}_{l,m}$ with $(n-i_0) l > (n-i_0+1) (m-1)$ and $l \ge 0$
are realized modulo lower terms by the $a^{i_0-1}_{l',m'}$ with
$(n-i_0 + 1) (l' +1) > (n-i_0+2) m' \ge 0$, because the last inequality is equivalent to 
$(n-i_0) m' > (n-i_0+1) (2m'-l'-1)$ and $m' \ge 0$.
Here we note that the forbidden case $l' = -1$ does not happen.
We note that, if we had $d'_{i_0} = d_{i_0} - 2$, then the case $l' = -1$ would appear. 

We have the remaining terms $a^{i_0-1}_{l'',m''}$ with 
$(n-i_0 + 1) l'' + n-i_0 + 1 \le (n-i_0+2) m''$ and $l'',m'' \ge 0$.
Then the $b^{i_0-1}_{l,m}$ with $(n-i_0 + 1) l + n-i_0 + 1 \le (n-i_0+2) m$ and $l \ge 0$ are 
taken care of modulo lower terms by such $a^i_{l'',m''}$.
We see that the remaining $b^{i_0-1}_{l,m}$ are those with
$(n-i_0 + 1) l + n-i_0 + 1 > (n-i_0+2) m$ and $l \ge 0$.
Then they are realized modulo lower terms by the $a^{i_0-1}_{l',m'}$ with $(n-i_0+2) l'  + (n - i_0 + 1) > (n-i_0+3) m' \ge 0$, $l' \ge 0$,
because the last inequality is equivalent to
$(n-i_0 + 1) m' + n-i_0 + 1 > (n-i_0+2) (2m' - l')$ and $l', m' \ge 0$. 
We note that $l' < 0$ does not happen, because $(n-i_0+2) l'  + (n - i_0 + 1) < 0$ otherwise.

For $i < i_0$, we claim that the terms 
$b^i_{l,m}$ are taken care of modulo lower terms by the $a^i_{l'',m''}$ with $(n-i) l'' + (n - i_0 + 1) \le (n-i+1) m''$, $l'' \ge 0$, and 
the $a^{i-1}_{l',m'}$ with $(n-i+1) l'  + (n - i_0 + 1) > (n-i+2) m' \ge 0$, $l' \ge 0$. 
We proceed by the descending induction on $i$.
The case $i = i_0 - 1$ is already proved.

Suppose that the case for $i = i_1$ is proved.
Then we can still use the $a^{i_1-1}_{l'',m''}$ with $(n-i_1+1) l''  + (n - i_0 + 1) \le (n-i_1+2) m''$, $l'' \ge 0$, $m'' \ge 0$ 
for the $b^{i_1-1}_{l,m}$
with $(n-i_1+1) l  + (n - i_0 + 1) \le (n-i_1+2) m$, $l \ge 0$.
We note that $m \ge 0$ is automatic.
The remaining $b^{i_1-1}_{l,m}$ with $(n-i_1+1) l  + (n - i_0 + 1) > (n-i_1+2) m$, $l \ge 0$
are taken care by the 
$a^{i_1-2}_{l',m'}$ with $(n-i_1+2) l'  + (n - i_0 + 1) > (n-i_1+3) m' \ge 0$, $l' \ge 0$, because the last inequality is equivalent to
$(n-i_1 + 1) m' + n-i_0 + 1 > (n-i_1+2) (2m' - l')$ and $m' \ge 0$. 
We note that $l' \ge 0$ is again automatic.
Therefore we complete the proof by the descending induction on $i$.
\end{proof}

The following says that $\mathcal T$ is a tilting locally free module:

\begin{Cor}
Let $\mathcal T = \mathcal R \oplus \bigoplus_{i=1}^n \mathcal R(-D_i)$.
Then $\text{Hom}^p(T, T) = 0$ for $p \ne 0$.
\end{Cor}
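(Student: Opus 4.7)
The target statement $\mathrm{Hom}^p(\mathcal T,\mathcal T)=0$ for $p\ne 0$ splits, via the additivity of $\mathrm{Hom}^p$ in each variable, into a finite collection of vanishing statements for pairs of divisorial sheaves $(\mathcal D,\mathcal D')$ with $\mathcal D,\mathcal D'\in\{\mathcal R,\mathcal R(-D_1),\dots,\mathcal R(-D_n)\}$. My plan is to reduce everything to the preceding lemma on \v Cech vanishing between specific divisorial sheaves.

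First I would dispose of the degrees $p\ge 2$. The Corollary just proved states that $\mathrm{Hom}^p_{\mathcal X}(\mathcal D,\mathcal N)\cong 0$ for every divisorial sheaf $\mathcal D$, every quasi-coherent $\mathcal N$, and every $p\ne 0,1$. Each summand of $\mathcal T$ is divisorial, so this kills $\mathrm{Hom}^p(\mathcal T,\mathcal T)$ for $p\ge 2$ automatically.

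Next, for $p=1$, I would invoke the Corollary identifying $\mathrm{Hom}^1_{\mathcal X}(\mathcal D,\mathcal N)\cong\text{\v C}\mathrm{Hom}^1_{\mathcal X}(\mathcal D,\mathcal N)$ when $\mathcal D$ is divisorial, reducing the task to showing $\text{\v C}\mathrm{Hom}^1_{\mathcal X}(\mathcal D,\mathcal D')=0$ for all pairs of summands. Writing $\mathcal D=\mathcal R(\sum d_iD_i)$ and $\mathcal D'=\mathcal R(\sum d'_iD_i)$ for each pair, I would simply verify the numerical hypothesis of the last Lemma (that $d'_i\ge d_i$ for every $i$ with at most one exception where $d'_{i_0}=d_{i_0}-1$): for $(\mathcal R,\mathcal R)$ and $(\mathcal R(-D_i),\mathcal R(-D_i))$ the vectors coincide; for $(\mathcal R(-D_i),\mathcal R)$ one has $d'_k-d_k\ge 0$ with equality except at $k=i$ where the difference is $+1$; for $(\mathcal R,\mathcal R(-D_j))$ the single exception is at $k=j$ with $d'_j=d_j-1$; and for $(\mathcal R(-D_i),\mathcal R(-D_j))$ with $i\ne j$ the differences are $+1$ at $k=i$, $-1$ at $k=j$, and $0$ elsewhere, again with exactly one index where $d'=d-1$. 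In every case the Lemma applies and gives the desired vanishing.

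Since all pairs are covered by this enumeration, no new computation is required; the work has already been done in the long combinatorial lemma on \v Cech vanishing between divisorial sheaves. The only delicate point is purely bookkeeping, namely checking that the ``one exception'' condition $d'_{i_0}=d_{i_0}-1$ is not violated by becoming $d_{i_0}-2$ for any of our pairs, which is immediate because each summand differs from $\mathcal R$ by at most a single $-D_i$. Thus the ``main obstacle'' was really settled in the preceding Lemma; the present Corollary is an essentially automatic specialization, and the proof will consist of citing the two Corollaries and the Lemma and running through the four-case table above.
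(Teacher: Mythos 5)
Your proposal is correct and is precisely the argument the paper intends but leaves implicit: vanishing for $p\ge 2$ follows from the corollary that $\text{Hom}^p_{\mathcal X}(\mathcal D,\mathcal N)=0$ for $p\ne 0,1$, and vanishing for $p=1$ reduces via the Čech comparison corollary to the long combinatorial lemma, whose numerical hypothesis you have correctly verified to hold for every pair of summands of $\mathcal T$. Your four-case bookkeeping (including the check that the single exception is always exactly $d'_{i_0}=d_{i_0}-1$, never $-2$) is accurate and complete.
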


We need the following lemma later:

\begin{Lem}\label{SES}
There exists an exact sequence of $\mathcal R$-modules
\[
\begin{split}
&0 \to \mathcal R(\sum d_iD_i) \to \mathcal R(\sum d_iD_i + D_j) \oplus \mathcal R(\sum d_iD_i + D_k) \\
&\to \mathcal R(\sum d_iD_i + D_j + D_k) \to 0
\end{split}
\]
for any integers $j \le k$, 
where the first and the second arrows $(h,h')$ and $(g,g')$ are given by 
\[
\begin{split}
&h = (\dots, h_{j-1,j}, h_j, h_{j,j+1}, \dots) =  (\dots, x_{j-1}, 1, 1, \dots), \\
&h' = (\dots, h'_{k-1,k}, h'_k, h'_{k,k+1}, \dots) = (\dots, 1, y_k, y_k, \dots), \\
&g = (\dots, g_{k-1,j}, g_k, g_{k,k+1}, \dots) = (\dots, 1, y_k, y_k, \dots), \\
&g' = (\dots, g'_{j-1,j}, g'_j, g'_{j,j+1}, \dots) = (\dots, -x_{j-1}, -1, -1, \dots).
\end{split}
\]
\end{Lem}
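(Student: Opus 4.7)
The plan is to view this sequence as a Koszul-type complex associated to two sections of the line bundles $\mathcal{R}(D_j)$ and $\mathcal{R}(D_k)$, and to verify its exactness piece-by-piece on the affine charts $\mathcal{R}_i$. First I would check well-definedness: a homomorphism $\mathcal{R}(\sum d_i D_i) \to \mathcal{R}(\sum d'_i D_i)$ between divisorial sheaves is given on each chart by left multiplication by some $a_i \in \mathcal{R}_i$, with gluing compatibility
\[
a_{i-1}\, x_{i-1}^{d_i} = x_{i-1}^{d'_i}\, a_i \quad \text{in } \mathcal{R}_{i-1,i}.
\]
Wherever $d_i = d'_i$ this forces $a_{i-1} = a_i$, so the data prescribed at the ``interesting'' indices $j{-}1, j$ and $k{-}1, k$ propagate uniquely to all remaining indices; the only nontrivial checks reduce to the tautologies $x_{j-1}\cdot 1 = 1 \cdot x_{j-1}$ and $x_{k-1}\cdot y_k = 1$ (using $y_k = x_{k-1}^{-1}$ in the overlap $\mathcal{R}_{k-1,k}$).

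Next I would verify the composition $g\circ h + g' \circ h' = 0$. On each $\mathcal{R}_i$ this is multiplication by $g_i h_i + g'_i h'_i$; splitting into the cases $i<j$, $j \le i < k$, $i \in \{j, k\}$, and $i>k$ (plus the degenerate case $j=k$), the two summands cancel either tautologically as $1-1$ or by the same commutation identity that guaranteed gluing compatibility.

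For exactness, by flatness of $\mathcal{R}_{i-1,i}$ over $\mathcal{R}_i$ and $\mathcal{R}_{i-1}$ it suffices to check exactness as $\mathcal{R}_i$-modules on each chart. Under the trivializations the sequence reads
\[
0 \to \mathcal{R}_i \xrightarrow{(a_i,\, b_i)} \mathcal{R}_i \oplus \mathcal{R}_i \xrightarrow{(b_i,\, -a_i)} \mathcal{R}_i \to 0
\]
for suitable pairs $(a_i, b_i)$ read off from the explicit multipliers. In every case one entry of the pair equals $1$—the coordinate unaffected by the twist at that chart—so the left map is split-injective by projection onto the $1$-entry, the right map is split-surjective by the same section, and exactness in the middle is the standard Koszul identity for a pair whose left-inverse contains a unit.

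The main obstacle I anticipate is bookkeeping: expressing the multipliers on charts far from $j$ and $k$ requires iterated substitution via $x_{i+1} = x_i^2 y_i + t_{i+1} x_i$ and $y_{i+1} = x_i^{-1}$, and the ``boundary'' pieces $i = j-1$ and $i = k$ need separate attention because that is precisely where the multiplier jumps from $1$ to $x_{j-1}$ or from $1$ to $y_k$. The borderline case $j = k$ must also be treated separately: the two middle summands coincide, and injectivity of $(h, h') = (1, y_j)$ on chart $j$ relies on $y_j$ being a non-zero-divisor in $\mathcal{R}_j$, which is visible from the filtration whose associated graded is the polynomial ring $k[t_0,\dots,t_n][x_j, y_j]$.
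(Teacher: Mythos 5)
Your proposal is essentially the same argument as the paper's, and it works. Both proofs verify the gluing compatibility at the two ``interesting'' boundaries and then establish exactness chart-by-chart by observing that on each chart $\mathcal R_i$ at least one of the two multipliers is the identity element $1$, forcing a split short exact sequence. The paper spells out only the case $j = k$ at the middle term and declares the rest ``similar''; your Koszul framing makes that split structure explicit and uniform, which is a modest expository gain. The unit-entry observation is correct: for $h$ (twist at $j$) the multiplier is $1$ on charts $i \ge j$, for $h'$ (twist at $k$) it is $1$ on charts $i < k$, and since $j \le k$ these ranges cover all $i$. The vanishing of the composite is also safe, because $g_i h_i + g'_i h'_i = h'_i h_i - h_i h'_i$ is a commutator with one factor equal to $1$.

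Two minor imprecisions, neither of which damages the main argument. First, the claim that $d_i = d'_i$ ``forces $a_{i-1} = a_i$'' is not correct in the noncommutative setting: the constraint $a_{i-1}x_{i-1}^{d_i} = x_{i-1}^{d_i}a_i$ forces $a_{i-1} = x_{i-1}^{d_i}a_i x_{i-1}^{-d_i}$, which is a conjugate rather than equality, and it is a genuine (though routine) check that this conjugate lies in $\mathcal R_{i-1}$ rather than merely in the overlap $\mathcal R_{i-1,i}$. The paper acknowledges this by deferring the ``dots'' to a separate lemma giving explicit multipliers. Your exactness argument is unaffected, since it relies only on the unit-entry observation, not on how the nonunit entries propagate. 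Second, the appeal to $y_j$ being a non-zero-divisor for injectivity in the case $j = k$ is superfluous: on chart $j$ the pair is $(h_j, h'_j) = (1, y_j)$, and the $1$ in the first component already gives split injectivity.
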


\begin{proof}
First we check that these maps are compatible with the gluing maps: $h,h'$ are given by
\[
\begin{CD}
\mathcal R_{j-1,j} @<{x_{j-1}^{d_j}}<< \mathcal R_j @>1>> \mathcal R_{j,j+1} \\
@V{x_{j-1}}VV @V1VV @V1VV \\
\mathcal R_{j-1,j} @<{x_{j-1}^{d_j+1}}<< \mathcal R_j @>1>> \mathcal R_{j,j+1},
\end{CD}
\]
\[
\begin{CD}
\mathcal R_{k-1,k} @<{x_{k-1}^{d_k}}<< \mathcal R_k @>1>> \mathcal R_{k,k+1} \\
@V1VV @V{y_k}VV @V{y_k}VV \\
\mathcal R_{k-1,k} @<{x_{k-1}^{d_k+1}}<< \mathcal R_k @>1>> \mathcal R_{k,k+1}.
\end{CD}
\]
and $g,g'$ are similar.
We note that the dots have more complicated expressions (see Lemma \ref{global X}).

It is easy to see that the first arrow is injective, the second surjective, and the composite vanishes.
We will check the exactness at the middle term.
For example, assume that $j = k$ and $g_j(u) + g'_j(v) = 0$, i.e., $y_j u - v = 0$.
Then $v = h'_j(u)$.
Other cases are similar.
\end{proof}

\section{Comparison of $\text{End}(\mathcal T)$ and $\mathcal S$}

We will compare associative algebras $\mathcal S = k[s_0,\dots,s_n]\langle u,v \rangle \# G/(uv - vu - s)$ and 
$\mathcal A = \text{End}(\mathcal T)$, 
where $\mathcal T = \mathcal R \oplus \bigoplus_{i=1}^n \mathcal R(-D_i)$.
We will prove that there is an isomorphism of algebras $\mathcal S \to \text{End}(\mathcal T)$
over a linear coordinate change from the parameters $s_i$ to the $t_j$.

\subsection{Decomposition of $\mathcal S$ by orthogonal idempotents}

$\mathcal S$ has natural orthogonal idempotents 
corresponding to irreducible representations of $G$.
We define 
\[
e^s_i = \sum_{j=0}^n \zeta^{ij}g^j/(n+1) \in \mathcal S
\]
for $0 \le i \le n$, where $\zeta = e^{2\pi i/(n+1)}$ and $g$ is a generator of $\mathbf Z/(n+1)$.
We regard the index $i$ of $e^s_i$ is an element of $\mathbf Z/(n+1)$, i.e., we consider it only modulo $n+1$, 
we use the convention $e^s_{i+n+1} = e^s_i$.

The following is clear from the definition:

\begin{Lem}\label{unipotents}
(0) $e^s_0 = e$.

(1) $1 = \sum_{i=0}^n e^s_i$.

(2) $(e^s_i)^2 = e^s_i$, and $e^s_ie^s_j = 0$ for $i \ne j$.

(3) $e^s_iu = ue^s_{i+1}$ and $e^s_iv = ve^s_{i-1}$.
\end{Lem}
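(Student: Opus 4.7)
The proof plan consists of direct character-theoretic calculations, since all four claims reduce to manipulations of the sums $e^s_i = \frac{1}{n+1}\sum_{j=0}^n \zeta^{ij} g^j$ together with the twisted commutation rules $g \cdot u = \zeta u \cdot g$ and $g \cdot v = \zeta^{-1} v \cdot g$ in $\mathcal S$. Nothing here uses the relation $uv - vu = \sum s_i g^i$; these are purely facts about the group algebra piece $k[G] \subset \mathcal S$ and the action of $G$ on $u,v$.

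For (0), substitute $i = 0$ into the defining sum: $e^s_0 = \frac{1}{n+1}\sum_{j=0}^n g^j = e$. For (1), I would swap the order of summation and use the standard orthogonality $\sum_{i=0}^n \zeta^{ij} = (n+1)\delta_{j,0}$, so that $\sum_i e^s_i$ collapses to $g^0 = 1$. For (2), a similar direct computation gives
\[
e^s_i e^s_k = \frac{1}{(n+1)^2}\sum_{j,l} \zeta^{ij+kl} g^{j+l} = \frac{1}{(n+1)^2}\sum_m g^m \sum_j \zeta^{(i-k)j}\zeta^{km},
\]
where we substitute $l = m - j$ modulo $n+1$, and the inner sum vanishes unless $i \equiv k \pmod{n+1}$, in which case it equals $n+1$; the surviving sum recognizes $e^s_i$ itself, giving $(e^s_i)^2 = e^s_i$ and $e^s_i e^s_k = 0$ for $i \neq k$.

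For (3), I would commute $u$ past each $g^j$ using $g^j u = \zeta^j u g^j$, so that
\[
e^s_i u = \frac{1}{n+1}\sum_{j=0}^n \zeta^{ij} g^j u = u \cdot \frac{1}{n+1}\sum_{j=0}^n \zeta^{(i+1)j} g^j = u e^s_{i+1},
\]
and symmetrically $e^s_i v = v e^s_{i-1}$ using $g^j v = \zeta^{-j} v g^j$. Indices are read modulo $n+1$, matching the stated convention $e^s_{i+n+1} = e^s_i$.

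There is no real obstacle here; the only thing to be careful about is (a) the cyclic reindexing when summing over $l$ in (2), which uses that $\{g^j\}$ is a basis of $k[G]$ so the coefficients in each $g^m$-component may be compared, and (b) making sure the commutation rule $g u = \zeta u g$ is applied in the correct order when iterating to $g^j u = \zeta^j u g^j$. Both are immediate from the definition of the twisted group algebra structure on $\mathcal S$.
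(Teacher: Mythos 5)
Your proof is correct and is exactly the computation the paper has in mind; the paper simply asserts the lemma is ``clear from the definition'' and omits the details. All four items are standard facts about the primitive idempotents of the group algebra $k[\mathbf Z/(n+1)]$ combined with the twisted multiplication $gu = \zeta ug$, $gv = \zeta^{-1}vg$, and your character-orthogonality calculations for (1), (2) and the commutation argument for (3) supply precisely the verifications being suppressed.
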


Thus we have a decomposition as $k[s_0,\dots,s_n]$-module:
\[
\mathcal S = \bigoplus_{i,j=0}^n \mathcal S_{i,j}, \,\,\, \mathcal S_{i,j} = e^s_i \mathcal S e^s_j.
\]
We define elements $\alpha^s_{i,i+1} \in \mathcal S_{i,i+1}$ and 
$\beta^s_{i+1,i} \in \mathcal S_{i+1,i}$ by
\[
\alpha^s_{i,i+1} = e^s_iue^s_{i+1}, \,\,\, \beta^s_{i+1,i} = e^s_{i+1}ve^s_i
\]
for $0 \le i \le n$.

\begin{Lem}
(1) $u = \sum_{i=0}^n \alpha^s_{i,i+1}$ and $v = \sum_{i=0}^n \beta^s_{i+1,i}$.

(2) $\mathcal S$ is generated as a $k[s_0,\dots,s_n]$-algebra by the $e^s_i$, $\alpha^s_{i,i+1}$ and 
$\beta^s_{i+1,i}$ for $0 \le i \le n$.

(2) There are following relations:
\[
\begin{split}
&\alpha^s_{i,i+1}\beta^s_{i+1,i} = uv e^s_i, \\
&\beta^s_{i+1,i}\alpha^s_{i,i+1} = vu e^s_{i+1}, \\
&\alpha^s_{i,i+1} \alpha^s_{i+1,i+2} \dots \alpha^s_{i+n-1,i+n} \alpha^s_{i+n,i+n+1} = u^{n+1} e^s_i, \\
&\beta^s_{i,i-1} \beta^s_{i-1,i-2} \dots \beta^s_{i-n+1,i-n} \beta^s_{i-n,i-n-1} = v^{n+1} e^s_i.
\end{split}
\]
\end{Lem}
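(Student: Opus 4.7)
The plan is to derive all three parts as formal consequences of the idempotent decomposition in Lemma \ref{unipotents}, using only the commutation relations $e^s_i u = u e^s_{i+1}$ and $e^s_i v = v e^s_{i-1}$, together with $(e^s_i)^2 = e^s_i$, $e^s_i e^s_j = 0$ for $i \ne j$, and $\sum_i e^s_i = 1$.

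For (1), I insert $1 = \sum_j e^s_j$ on both sides of $u$ and expand. Using $u e^s_j = e^s_{j-1} u$, the double sum $\sum_{i,j} e^s_i u e^s_j$ collapses to the diagonal: $\sum_i e^s_i u = \sum_i e^s_i u e^s_{i+1} = \sum_i \alpha^s_{i,i+1}$, and the formula for $v$ is symmetric. For (2), $\mathcal S$ is generated as a $k[s_0,\dots,s_n]$-algebra by $u$, $v$ and the elements of $G$ by definition, and the Fourier inversion $g^j = \sum_i \zeta^{-ij} e^s_i$ expresses each $g^j$ in the $k$-span of the $e^s_i$, so combined with (1) the listed elements suffice.

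For the third part, the two quadratic relations follow by direct telescoping:
\[
\alpha^s_{i,i+1}\beta^s_{i+1,i} = e^s_i u e^s_{i+1} v e^s_i = e^s_i u v e^s_i = uv e^s_i,
\]
where I first used $e^s_{i+1} v = v e^s_i$ to eliminate the middle idempotent and then used the fact that $u$ and $v$ shift the idempotent by opposite amounts, so $uv$ commutes with $e^s_i$. The mirror computation gives $\beta^s_{i+1,i}\alpha^s_{i,i+1} = vu e^s_{i+1}$. For the length-$(n+1)$ products, an easy induction on $k$ yields $\alpha^s_{i,i+1} \cdots \alpha^s_{i+k-1,i+k} = e^s_i u^k e^s_{i+k}$; at $k = n+1$ this becomes $e^s_i u^{n+1} e^s_i$ by the cyclic convention $e^s_{i+n+1} = e^s_i$, and $u^{n+1}$ commutes with $e^s_i$ (iterate $u e^s_j = e^s_{j-1} u$ a total of $n+1$ times), so the product equals $u^{n+1} e^s_i$. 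The product of $\beta$'s is handled identically with $v$ in place of $u$.

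The entire argument is formal once Lemma \ref{unipotents} is in hand, so there is no genuine obstacle; the only care needed is consistency with the cyclic index convention $e^s_{i+n+1} = e^s_i$ and with the fact that $u,v$ act as opposite shifts on idempotents, which is precisely what makes $uv$, $vu$, $u^{n+1}$ and $v^{n+1}$ central against the $e^s_i$.
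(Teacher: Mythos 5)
Your proposal is correct and takes essentially the same approach as the paper: expand $u = \sum_{i,j} e^s_i u e^s_j$ and use the shift relation $e^s_i u = u e^s_{i+1}$ from Lemma \ref{unipotents} to kill the off-diagonal terms, then derive (3) from the observation that $uv$, $vu$, $u^{n+1}$, $v^{n+1}$ commute with the idempotents. The only difference is cosmetic; you spell out the telescoping for part (3) where the paper simply cites the commutation facts.
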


\begin{proof}
(1) If $j \ne i+1$, then
\[
e^s_iu e^s_j = e^s_ie^s_iu e^s_j = e^s_iue^s_{i+1} e^s_j = 0.
\]
Hence 
\[
u = \sum_{i,j=0}^n e^s_i u e^s_j = \sum_{i=0}^n e^s_i u e^s_{i+1}.
\]
The assertion for $v$ is similarly proved.

(2) The claim is clear.

(3) The assertion follows from $uv e^s_i = e^s_i uv$, $u^{n+1}e^s_i = e^s_i u^{n+1}$ and $v^{n+1}e^s_i = e^s_i v^{n+1}$.
\end{proof}

\subsection{Decomposition of $\mathcal A$ by orthogonal idempotents}

We will determine the structure of the endomorphism algebra 
$\mathcal A = \text{End}_{\mathcal R}(\mathcal T)$ of the tilting object 
$\mathcal T = \mathcal R \oplus \bigoplus_{i=1}^n \mathcal R(-D_i)$.

$\mathcal A$ has natural orthogonal idempotents 
$e^t_i \in \mathcal A$ for $0 \le i \le n$ defined by the projections
\[
e^t_i: \mathcal T \to \mathcal R(-D_i) \to \mathcal T
\]
where we set $D_0 = 0$ and $\mathcal R(-D_0) = \mathcal R$.
We have a decomposition of the identity to orthogonal idempotents $\sum_{i=0}^n e^t_i = 1 \in \mathcal A$.
Thus we have a decomposition as $k[t_0,\dots,t_n]$-modules:
\[
\mathcal A = \bigoplus_{i,j=0}^n \mathcal A_{i,j}, \,\,\, \mathcal A_{i,j} = e^t_i \mathcal A e^t_j.
\]
We note that we use again the convention $e^t_i = e^t_{i+n+1}$.

We will express any element $h \in \mathcal A_{i,j}$ by a sequence of functions 
$(h_0,h_1,\dots,h_n)$, where $h_k \in \mathcal R_k$, such that
$h_k(r) = h_kr$, because other components $h_{k-1,k}$ are determined by the $h_k$.

\begin{Lem}\label{alphabeta}
There are elements $\alpha^t_{i,i+1} \in \mathcal A_{i,i+1}$, $\beta^t_{i+1,i} \in \mathcal A_{i+1,i}$ 
for $0 \le i \le n$ such that
\[
(\alpha^t_{i,i+1})_k = \begin{cases} 
x_ky_k + (i-k-1)t_0 + t_{k+1} + \dots + t_i, \,\,\, &(0 \le k < i), \\
x_i, \,\,\, &(k=i), \\
1, \,\,\, &(i < k \le n). \end{cases}
\]
\[
(\beta^t_{i+1,i})_k = \begin{cases} 
1, \,\,\, &(0 \le k < i), \\
y_i, \,\,\, &(k = i), \\
x_ky_k - (k - i)t_0 - t_{i+1} - \dots - t_k, \,\,\, &(i < k \le n). \end{cases}
\]
\end{Lem}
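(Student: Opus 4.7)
The plan is to verify directly that the proposed sequences $(h_0, \ldots, h_n)$ satisfy the gluing compatibility required to define morphisms in $\text{Hom}_{\mathcal R}(\mathcal R(-D_{i+1}), \mathcal R(-D_i))$ and $\text{Hom}_{\mathcal R}(\mathcal R(-D_i), \mathcal R(-D_{i+1}))$ respectively, thus producing elements of $\mathcal A_{i,i+1}$ and $\mathcal A_{i+1,i}$. Here I use the identifications $\mathcal A_{i,j} = e^t_i \mathcal A e^t_j \cong \text{Hom}_{\mathcal R}(\mathcal R(-D_j), \mathcal R(-D_i))$, with $D_0 := 0$ as in the statement.

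First I would unpack the compatibility condition. Since $\psi^{\mathcal R(\sum d_j D_j)}_{k,+}$ is the identity and $\psi^{\mathcal R(\sum d_j D_j)}_{k,-}$ is multiplication by $x_{k-1}^{d_k}$, a collection of elements $h_k \in \mathcal R_k$ acting by left multiplication defines a morphism $\mathcal R(-D_j) \to \mathcal R(-D_i)$ exactly when, for each $1 \le k \le n$,
\[
h_{k-1} \;=\; x_{k-1}^{-\delta_{k,i}} \, h_k \, x_{k-1}^{\delta_{k,j}}
\]
holds in $\mathcal R_{k-1,k}$, where $\delta$ is the Kronecker symbol. The engine of all verifications is then the single identity
\[
x_k y_k - x_{k-1} y_{k-1} \;=\; t_0 + t_k
\]
in $\mathcal R_{k-1,k}$, which I would establish by substituting $x_k = x_{k-1}^2 y_{k-1} + t_k x_{k-1}$ and $y_k = x_{k-1}^{-1}$ into $x_k y_k$ and then applying the commutator $x_{k-1} y_{k-1} - y_{k-1} x_{k-1} = t_0$ to rewrite $x_{k-1}^2 y_{k-1} x_{k-1}^{-1}$ as $x_{k-1} y_{k-1} + t_0$.

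For $\alpha^t_{i,i+1}$ the verification then splits by the position of $k$. When $k \notin \{i, i+1\}$, both Kronecker symbols vanish and the compatibility reduces to $h_{k-1} = h_k$, which is either the tautology $1 = 1$ or, in the range where $h_k = x_k y_k + (i-k-1)t_0 + t_{k+1} + \cdots + t_i$, follows directly from the key identity, which exactly absorbs the increment $t_0 + t_k$. When $k = i$ the condition becomes $x_{i-1} y_{i-1} + t_i = x_{i-1}^{-1} x_i$, immediate from the factorization $x_i = x_{i-1}(x_{i-1} y_{i-1} + t_i)$ of the transformation rule. When $k = i+1$ it is the tautology $x_i = 1 \cdot x_i$. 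The verification for $\beta^t_{i+1,i}$ is entirely symmetric: at $k = i$ it reduces to $y_i x_{i-1} = 1$, which holds because $y_i = x_{i-1}^{-1}$ in $\mathcal R_{i-1,i}$, and at $k = i+1$ it amounts to $x_i^{-1}(x_{i+1} y_{i+1} - t_0 - t_{i+1}) = y_i$, which is a rearrangement of the key identity.

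The main obstacle is the combinatorial bookkeeping forced by noncommutativity. In deriving the key identity, one must keep track of the order in which $y_{k-1}$ is moved past $x_{k-1}$ via the commutator relation, and one should similarly be alert in the $k=i$ and $k=i+1$ cases, where left and right multiplications by $x_{k-1}^{\pm 1}$ do not commute with $h_k$. Once the key identity is established, the remaining checks reduce to termwise matching of polynomials in the given formulas.
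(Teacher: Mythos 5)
Your proof is correct and follows essentially the same route as the paper: you encode the gluing compatibility as $h_{k-1} = x_{k-1}^{-\delta_{k,i}}\,h_k\,x_{k-1}^{\delta_{k,j}}$ in $\mathcal R_{k-1,k}$ and reduce every case to the single identity $x_k y_k = x_{k-1}y_{k-1} + t_0 + t_k$, which is exactly the relation the paper's diagram-by-diagram verification rests on (the paper phrases the off-diagonal cases as a descending induction on $k$ using the same increment $t_0+t_k$). The Kronecker-delta packaging is a modest stylistic streamlining, not a different argument.
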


\begin{proof}
Since $x_{i+1} = x_i^2y_i + t_{i+1}x_i$ and $y_{i+1} = x_i^{-1}$, we have
\[
x_{i+1}y_{i+1} = x_iy_i + t_0 + t_{i+1}.
\]
The following is a part of the diagram for the homomorphism 
$\alpha^t_{i,i+1}: \mathcal R(-D_{i+1}) \to \mathcal R(-D_i)$: 
\[
\begin{CD}
\mathcal R_{i-1} @>1>> \mathcal R_{i-1,i} @<1<< \mathcal R_i @>1>> \mathcal R_{i,i+1} @<{x_i^{-1}}<< R_{i+1} \\
@V{x_{i-1}y_{i-1}+t_i}VV @V{x_{i-1}y_{i-1}+t_i}VV @V{x_i}VV @V{x_i}VV @V1VV  \\
\mathcal R_{i-1} @>1>> \mathcal R_{i-1,i} @<{x_{i-1}^{-1}}<< \mathcal R_i @>1>> \mathcal R_{i,i+1} @<1<< R_{i+1}.
\end{CD}
\]
We confirm the commutativity of the middle left square by $x_{i-1}^{-1}x_i = y_ix_i = x_{i-1}y_{i-1} + t_i$.
We note that the case $i = 0, n$ are also confirmed by the following diagrams for $\alpha^t_{0,1}$ and $\alpha^t_{n,0}$:
\[
\begin{CD}
\mathcal R_0 @>1>> \mathcal R_{0,1} @<{x_0^{-1}}<< \mathcal R_1 \\
@V{x_0}VV @V{x_0}VV @V1VV \\
\mathcal R_0 @>1>> \mathcal R_{0,1} @<1<< \mathcal R_1, 
\end{CD}
\]
\[
\begin{CD}
\mathcal R_{n-1} @>1>> \mathcal R_{n-1,n} @<1<< \mathcal R_n \\
@V{x_{n-1}y_{n-1}+t_n}VV @V{x_{n-1}y_{n-1}+t_n}VV @V{x_n}VV \\
\mathcal R_{n-1} @>1>> \mathcal R_{n-1,n} @<{x_{n-1}^{-1}}<< \mathcal R_n.
\end{CD}
\]
Then the assertion for $k < i-1$ is confirmed by the descending induction on $k$ by
\[
x_{k+1}y_{k+1} + (i-k-2)t_0 + t_{k+2} + \dots + t_i
= x_ky_k + (i-k-1)t_0 + t_{k+1} + \dots + t_i.
\]

The assertion for $\beta^t_{i+1,i}$ with $\beta^t_{1,0}$ and $\beta^t_{0,n}$ is similarly confirmed by the following diagrams:
\[
\begin{CD}
\mathcal R_{i-1} @>1>> \mathcal R_{i-1,i} @<{x_{i-1}^{-1}}<< \mathcal R_i @>1>> \mathcal R_{i,i+1} @<1<< R_{i+1} \\
@V1VV @V1VV @V{y_i}VV @V{y_i}VV @VV{x_{i+1}y_{i+1} - t_0 - t_{i+1}}V  \\
\mathcal R_{i-1} @>1>> \mathcal R_{i-1,i} @<1<< \mathcal R_i @>1>> \mathcal R_{i,i+1} @<{x_i^{-1}}<< R_{i+1}.
\end{CD}
\]
\[
\begin{CD}
\mathcal R_0 @>1>> \mathcal R_{0,1} @<1<< \mathcal R_1 \\
@V{y_0}VV @V{y_0}VV @VV{x_0y_0 - t_0 - t_1}V \\
\mathcal R_0 @>1>> \mathcal R_{0,1} @<{x_0^{-1}}<< \mathcal R_1.
\end{CD}
\]
\[
\begin{CD}
\mathcal R_{n-1} @>1>> \mathcal R_{n-1,n} @<{x_{n-1}^{-1}}<< \mathcal R_n \\
@V1VV @V1VV @V{y_n}VV \\
\mathcal R_{n-1} @>1>> \mathcal R_{n-1,n} @<1<< \mathcal R_n
\end{CD}
\]
and
\[
x_{k+1}y_{k+1} - (k + 1 - i)t_0 - t_{i+1} - \dots - t_{k+1}
= x_ky_k - (k - i)t_0 - t_{i+1} - \dots - t_k.
\]
\end{proof}

\begin{Defn}
We define $u^t, v^t \in \mathcal A$ by
\[
u^t = \sum_{i=0}^n \alpha^t_{i,i+1}, \,\,\, v^t = \sum_{i=0}^n \beta^t_{i+1,i}.
\]
\end{Defn}

Then we have $\alpha^t_{i,i+1} = e^t_i u e^t_{i+1}$, $\beta^t_{i+1,i} = e^t_{i+1} v e^t_i$.

We can express $u^t$ and $v^t$ as matrix forms when they are restricted to an affine chart $\mathcal R_0$:

\begin{Lem}
\[
\begin{split}
&u^t \vert_{\mathcal R_0} =  \\
&\left( \begin{matrix}
0 & x_0 & 0 & \dots & 0 \\
0 & 0 & x_0y_0 + t_1 & \dots & 0 \\
\dots & \dots & \dots & \dots & \dots \\ 
0 & 0 & 0 & \dots & x_0y_0 + (n-2)t_0 + t_1 + \dots + t_{n-1} \\
x_0y_0 + (n-1)t_0 + t_1 + \dots + t_n & 0 & 0 & \dots & 0
\end{matrix} \right)
\end{split}
\]
\[
v^t \vert_{\mathcal R_0} = \left( \begin{matrix}
0 & 0 & \dots & 0 & 1 \\
y_0 & 0 & \dots & 0 & 0 \\
0 & 1 & \dots & 0 & 0 \\ 
\dots & \dots & \dots & \dots & \dots \\ 
0 & 0 & \dots & 1 & 0
\end{matrix} \right)
\]
\end{Lem}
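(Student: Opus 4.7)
The plan is to read off the matrix entries directly from Lemma \ref{alphabeta}, specialized at the affine chart index $k=0$. Since $\mathcal T = \mathcal R \oplus \bigoplus_{i=1}^n \mathcal R(-D_i)$, the restriction to $\mathcal R_0$ identifies $\mathcal T\vert_{\mathcal R_0}$ with $\mathcal R_0^{\oplus (n+1)}$, so an element $h \in \mathcal A$ restricted to $\mathcal R_0$ is represented by an $(n+1)\times(n+1)$ matrix over $\mathcal R_0$; the $(i,j)$-entry is the $k=0$ component of the corresponding homomorphism $\mathcal R(-D_j) \to \mathcal R(-D_i)$, acting by left multiplication as explained in the paragraph preceding Lemma \ref{alphabeta}. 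Here we use the convention $D_0 = 0$ so that the $0$-indexed summand is $\mathcal R$.

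First I would note that each $\alpha^t_{i,i+1} \in \mathcal A_{i,i+1}$ contributes a single non-zero entry, located in row $i$ and column $i+1 \pmod{n+1}$, whose value is $(\alpha^t_{i,i+1})_0$. Substituting $k=0$ into the piecewise formula of Lemma \ref{alphabeta}: when $i=0$ we are in the case $k=i$, giving entry $x_0$ in position $(0,1)$; when $0 < i \le n-1$ we are in the case $k < i$, giving entry $x_0y_0 + (i-1)t_0 + t_1 + \cdots + t_i$ in position $(i,i+1)$; and for $i = n$ the entry $(\alpha^t_{n,0})_0 = x_0 y_0 + (n-1)t_0 + t_1 + \cdots + t_n$ sits in position $(n,0)$ due to the cyclic wraparound $n+1 \equiv 0$. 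Summing $u^t = \sum_i \alpha^t_{i,i+1}$ assembles exactly the displayed matrix, with the ``corner'' entry in the lower-left coming from $\alpha^t_{n,0}$.

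The argument for $v^t$ is entirely parallel. Each $\beta^t_{i+1,i}$ contributes one non-zero entry in row $i+1 \pmod{n+1}$ and column $i$, equal to $(\beta^t_{i+1,i})_0$. Specializing Lemma \ref{alphabeta} at $k=0$: the case $k=i$ applies only when $i=0$ and gives $y_0$ in position $(1,0)$; the case $k < i$ gives $1$ in positions $(i+1,i)$ for $1 \le i \le n-1$; and the cyclic wraparound places $(\beta^t_{0,n})_0 = 1$ in position $(0,n)$. This reproduces the second displayed matrix.

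There is no real obstacle beyond careful bookkeeping of the cyclic indexing ($\alpha^t_{n,n+1} = \alpha^t_{n,0}$ and $\beta^t_{n+1,n} = \beta^t_{0,n}$), which produces the wraparound entries in the corners of each matrix. Everything else is direct substitution into the formulas already established.
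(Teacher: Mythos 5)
Your proof is correct and takes exactly the approach the paper intends: the Lemma carries no proof precisely because it is a direct read-off of the $k=0$ case of Lemma \ref{alphabeta}, with $\alpha^t_{i,i+1}$ contributing the $(i,i+1 \bmod n+1)$ entry and $\beta^t_{i+1,i}$ contributing the $(i+1 \bmod n+1, i)$ entry. Your handling of the cyclic wraparound for $\alpha^t_{n,0}$ and $\beta^t_{0,n}$, and of the case split at $i=0$ (where the $k=i$ branch applies), is exactly the required bookkeeping.
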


We have the following identities:

\begin{Cor}\label{uv}
\[
\begin{split}
u^tv^t \vert_{\mathcal R_0} &= \text{diag}(x_0y_0, x_0y_0 + t_1, x_0y_0 + t_0 + t_1 + t_2, \\
&\dots, x_0y_0 + (n-1)t_0 + t_1 + \dots + t_n), \\
v^tu^t \vert_{\mathcal R_0} &= \text{diag}(x_0y_0 + (n-1)t_0 + t_1 + \dots + t_n, x_0y_0 - t_0, x_0y_0 + t_1, \\
&\dots, x_0y_0 + (n-2)t_0 + t_1 + \dots + t_{n-1}), \\
(u^tv^t - v^tu^t) \vert_{\mathcal R_0} &= \text{diag}(- (n-1)t_0 - t_1 - \dots - t_n, t_0 + t_1, t_0 + t_2, \dots, t_0 + t_n).
\end{split}
\]
\end{Cor}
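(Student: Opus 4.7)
The plan is to read off the two products directly from the matrix forms of $u^t$ and $v^t$ given in the preceding lemma. The crucial feature is that each row of $u^t|_{\mathcal R_0}$ has exactly one nonzero entry (at column $i+1 \pmod{n+1}$ of row $i$), and the same for $v^t|_{\mathcal R_0}$ (at column $i-1 \pmod{n+1}$ of row $i$). Hence both $u^tv^t$ and $v^tu^t$ are automatically supported on the diagonal, and each diagonal entry is a product of exactly two scalars from the entries listed in the lemma.

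First I would compute the $(i,i)$-entry of $u^t v^t$ as $(u^t)_{i,i+1}(v^t)_{i+1,i}$ (indices mod $n+1$). For $i=0$ this is $x_0 \cdot y_0 = x_0y_0$; for $1 \le i \le n-1$ the factor from $v^t$ is $1$, yielding exactly $x_0y_0 + (i-1)t_0 + t_1+\dots+t_i$; for $i=n$ the factor from $v^t$ is again $1$, yielding $x_0y_0 + (n-1)t_0 + t_1 + \dots + t_n$. This recovers the first displayed diagonal.

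Next I would compute $(v^tu^t)_{i,i} = (v^t)_{i,i-1}(u^t)_{i-1,i}$ by the same principle. For $i=0$ the $v^t$-factor is $1$ and the $u^t$-factor is the bottom-left corner entry of $u^t$, giving $x_0y_0 + (n-1)t_0 + t_1+\dots+t_n$; for $2 \le i \le n$ the $v^t$-factor is $1$ and we pick up $x_0y_0 + (i-2)t_0 + t_1+\dots+t_{i-1}$. The only computation that is not trivial multiplication by $1$ is the case $i=1$, where the product is $y_0 \cdot x_0$; here I would invoke the defining commutation relation $x_0y_0 - y_0x_0 = t_0$ of $\mathcal R_0$ to rewrite this as $x_0y_0 - t_0$. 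This is the only place where non-commutativity actually enters the calculation, and it is the single point one has to be careful about.

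Finally the commutator formula follows by subtracting the two diagonals entry by entry: at $i=0$ we get $x_0y_0 - (x_0y_0 + (n-1)t_0 + t_1+\dots+t_n)$; at $i=1$ we get $(x_0y_0+t_1)-(x_0y_0-t_0) = t_0+t_1$; and for $2 \le i \le n$ the telescoping sum $(i-1)t_0+t_1+\dots+t_i - (i-2)t_0 - t_1 - \dots - t_{i-1} = t_0 + t_i$ gives the remaining entries. No step is a serious obstacle — the whole argument is bookkeeping built on top of the matrix forms from the previous lemma, with the one genuine input being the relation $y_0x_0 = x_0y_0 - t_0$.
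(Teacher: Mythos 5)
Your computation is correct and is exactly the proof the paper leaves implicit: the corollary follows from the matrix forms in the preceding lemma by direct multiplication, the only genuine non-commutative input being $y_0x_0 = x_0y_0 - t_0$ in the $(1,1)$ entry of $v^tu^t$. You also correctly observe that composition of right-module endomorphisms corresponds to the usual ordered matrix product $(u^tv^t)_{ii} = (u^t)_{i,i+1}(v^t)_{i+1,i}$ and similarly for $v^tu^t$, so the bookkeeping and the final telescoping for the commutator are all sound.
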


\subsection{Algebra homomorphism $\mathcal S \to \mathcal A$}

We defined $e_i^s = \frac 1{n+1} \sum_{j = 0}^n \zeta^{ij}g^j$.
Thus we have $g^j = \sum_{i=0}^n \zeta^{-ij}e^s_i$.
Now we define
\[
g^t = \sum_{i = 0}^n \zeta^{-i}e^t_i.
\]



\begin{Lem}\label{gu}
(1) $(g^t)^{n+1} = 1$ and $(g^t)^j \ne 1$ for $0 < j < n+1$.

(2) $e^t_i = \sum_{j=0}^n \zeta^{ij}(g^t)^j/(n+1)$.

(3) $g^tu^t = \zeta u^tg^t$ and $g^tv^t = \zeta^{-1} v^tg^t$.
\end{Lem}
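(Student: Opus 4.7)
The plan is to exploit the decomposition of $\mathcal A$ into orthogonal idempotent components and the fact that multiplication by $\alpha^t_{i,i+1}$ shifts the idempotent index by one, so that $g^t$ behaves exactly like the discrete Fourier transform dual of the system $\{e^t_i\}$.

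First I would handle (1) by noting that since the $e^t_i$ are mutually orthogonal idempotents summing to $1$, for any integer $j$ we have $(g^t)^j = \sum_{i=0}^n \zeta^{-ij} e^t_i$. Setting $j = n+1$ gives $\zeta^{-i(n+1)} = 1$ for all $i$ and hence $(g^t)^{n+1} = \sum_i e^t_i = 1$. For $0 < j < n+1$, the equality $(g^t)^j = 1 = \sum_i e^t_i$ would force $\zeta^{-ij} = 1$ for every $i$ (using linear independence of the $e^t_i$, which holds because they are projections onto the distinct summands $\mathcal R(-D_i)$ of $\mathcal T$), and this fails for $i = 1$.

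For (2) I would do the standard discrete Fourier inversion: substitute $(g^t)^j = \sum_k \zeta^{-kj} e^t_k$ into the right-hand side and swap sums, obtaining
\[
\sum_{j=0}^n \zeta^{ij}(g^t)^j = \sum_{k=0}^n \Bigl(\sum_{j=0}^n \zeta^{(i-k)j}\Bigr) e^t_k = (n+1)\, e^t_i,
\]
by the orthogonality relation $\sum_j \zeta^{(i-k)j} = (n+1)\delta_{ik}$.

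For (3), the key observation is that $\alpha^t_{k,k+1} = e^t_k u^t e^t_{k+1}$, so $e^t_i \alpha^t_{k,k+1} = \delta_{ik}\alpha^t_{k,k+1}$ and $\alpha^t_{k,k+1} e^t_j = \delta_{j,k+1}\alpha^t_{k,k+1}$. Then a direct expansion gives
\[
g^t u^t = \sum_{i,k} \zeta^{-i} e^t_i \alpha^t_{k,k+1} = \sum_k \zeta^{-k} \alpha^t_{k,k+1},
\]
\[
u^t g^t = \sum_{k,j} \zeta^{-j} \alpha^t_{k,k+1} e^t_j = \sum_k \zeta^{-(k+1)} \alpha^t_{k,k+1} = \zeta^{-1} \sum_k \zeta^{-k} \alpha^t_{k,k+1},
\]
which yields $g^t u^t = \zeta\, u^t g^t$. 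The argument for $v^t$ is identical except that $\beta^t_{k+1,k} = e^t_{k+1} v^t e^t_k$ shifts the index the other way, producing $g^t v^t = \zeta^{-1} v^t g^t$. The computations are routine; the only potential obstacle is keeping track of the index conventions (the cyclic shift $e^t_{i+n+1} = e^t_i$ and the direction of the shift for $\alpha$ versus $\beta$), but once the shift relations are written down, each identity is immediate.
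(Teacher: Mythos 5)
Your proof is correct and follows essentially the same approach as the paper: expand $(g^t)^j$ in terms of the orthogonal idempotents, use discrete Fourier inversion for (2), and use the idempotent-shifting identities $\alpha^t_{k,k+1}=e^t_k u^t e^t_{k+1}$, $\beta^t_{k+1,k}=e^t_{k+1} v^t e^t_k$ for (3). The paper verifies (3) componentwise against each $e^t_{i+1}$ (resp.\ $e^t_i$) rather than summing everything at once, but that is a presentational difference, not a different argument.
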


\begin{proof}
(1) This is because $(g^t)^j = \sum_{i=0}^n \zeta^{-ij}e^t_i$.

(2) We have $\sum_{j=0}^n \zeta^{ij}(g^t)^j = \sum_{j,k=0}^n \zeta^{ij}\zeta^{-jk}e^t_k = (n+1)e^t_i$.

(3) We will prove that $g^tu^te^t_{i+1} = \zeta u^tg^te^t_{i+1}$ and $g^tv^te^t_i = \zeta^{-1} v^tg^te^t_i$
for all $i$.
Indeed
\[
\begin{split}
&g^tu^te^t_{i+1} = \zeta^{-i}e^t_i\alpha^t_{i,i+1}e^t_{i+1}
= \zeta \alpha_{i,i+1} \zeta^{-i-1}e^t_{i+1} = \zeta u^tg^t e^t_{i+1}, \\
&g^tv^te^t_i = \zeta^{-i-1}e^t_{i+1}\beta^t_{i+1,i}e^t_i 
= \zeta^{-1}\beta^t_{i+1,i}\zeta^{-i}e^t_i = \zeta^{-1}v^tg^te^t_i.
\end{split}
\]
\end{proof}

We define $w_0, \dots, w_n$ by
\[
s_i = \frac 1{n+1}\sum_{j=0}^n \zeta^{ij}w_j.
\]
We note that this linear transformation is invertible.
Then we compare two formulas:
\begin{itemize}
\item $\mathcal S$: $uv - vu = \sum_{i=0}^n s_ig^i =\sum_{j=0}^n w_je^s_j$.

\item $\mathcal X$: 
$u^tv^t - v^tu^t = (- (n-1)t_0 - t_1 - \dots - t_n)e^t_0 + (t_0 + t_1)e^t_1 \break + (t_0 + t_2)e^t_2 + \dots + (t_0 + t_n)e^t_n$.
\end{itemize}
From the comparison, we define another invertible linear transformation:
\[
w_0 = - (n-1)t_0 - t_1 - \dots - t_n, \,\,\, w_1 = t_0 + t_1, \dots, w_n = t_0 + t_n.
\]

We define an algebra homomorphism 
\[
\phi: \mathcal S \to \mathcal A
\]
over the linear coordinate change $k[s_0,\dots,s_n] \to k[t_0,\dots,t_n]$ defined above by  
\[
\phi(e^s_i) = e^t_i, \,\,\, \phi(u) = u^t, \,\,\, \phi(v) = v^t, \,\,\, \phi(g) = g^t.
\]
By Corollary \ref{uv}, $\phi$ is a well defined algebra homomorphism.

\begin{Thm}
$\phi:  \mathcal S \to \mathcal A$ is an algebra isomorphism. 
\end{Thm}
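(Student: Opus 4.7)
The plan is to exhibit both $\mathcal S$ and $\mathcal A$ as the path algebra of the same cyclic quiver of type $\tilde A_n$ (with arrows in both directions) modulo identical relations, and to verify that $\phi$ identifies these presentations. Since $\phi(e^s_i) = e^t_i$, the map respects the block decompositions $\mathcal S = \bigoplus_{i,j} \mathcal S_{i,j}$ and $\mathcal A = \bigoplus_{i,j} \mathcal A_{i,j}$, so it suffices to prove $\phi$ restricts to an isomorphism $\mathcal S_{i,j} \to \mathcal A_{i,j}$ on each block.

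First I would extract the defining relation on each side. On the $\mathcal S$ side, expanding $uv - vu = \sum_i s_i g^i$ via the inverse Fourier identity $\sum_i s_i g^i = \sum_j w_j e^s_j$ and multiplying by $e^s_i$ yields the commutator identity
\[
\alpha^s_{i,i+1}\beta^s_{i+1,i} - \beta^s_{i,i-1}\alpha^s_{i-1,i} = w_i\, e^s_i.
\]
On the $\mathcal A$ side, Corollary \ref{uv} gives exactly the same commutator identity for $\alpha^t, \beta^t$ with the same linear forms $w_i$ in the $t_j$ — this is precisely the point of the coordinate change $s_i \leftrightarrow t_j$ set up just before the theorem. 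Combined with the idempotent and source–target relations (automatic from the quiver structure) and the $G$-twist relations of Lemma \ref{gu}, one obtains side-by-side presentations that match term-by-term under $\phi$.

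The real work is then to show that $\mathcal A_{i,j}$ has no further relations beyond those pulled back from $\mathcal S_{i,j}$. For surjectivity, I would compute $\mathcal A_{i,j} = \text{Hom}_{\mathcal R}(\mathcal R(-D_j), \mathcal R(-D_i))$ explicitly as a $k[t_0,\dots,t_n]$-module by analysing compatible tuples $(h_0,\dots,h_n)$ with $h_k \in \mathcal R_k$, in the same bidegree-filtration style used in the proof of the $\text{\v CHom}^1$-vanishing for divisorial sheaves. This exhibits a $k[t]$-basis of $\mathcal A_{i,j}$ consisting of ``standard monomials'' in the $\alpha^t, \beta^t$. For injectivity I would produce the matching $k[s]$-basis of $\mathcal S_{i,j}$, obtained by using the commutator relation to push $\alpha^s$'s past $\beta^s$'s in any quiver path until a normal form is reached, and verify that $\phi$ carries one basis to the other.

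The main obstacle I anticipate is the explicit verification that $\mathcal A_{i,j}$ is freely generated, as a $k[t]$-module, by the images under $\phi$ of the standard monomial basis of $\mathcal S_{i,j}$, with no leftover torsion or relations. Should this coordinate computation become unwieldy, a fallback is to specialize at $t=0$ (equivalently $s=0$), where $\phi$ reduces to the classical derived McKay isomorphism $S \cong \text{End}(T)$ of \cite{KV} and is therefore an isomorphism; combined with flatness of both sides over the parameter polynomial rings and a graded Nakayama argument (placing $s_i, t_i$ in positive degree so that the commutator relation is homogeneous), this would lift the isomorphism from the central fiber to the full deformation.
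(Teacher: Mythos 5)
Your primary route---splitting into blocks via the orthogonal idempotents, extracting the relation $\alpha^s_{i,i+1}\beta^s_{i+1,i}-\beta^s_{i,i-1}\alpha^s_{i-1,i}=w_ie^s_i$ and matching it against Corollary~\ref{uv}, and then proving completeness of relations by a degree/filtration computation in the charts $\mathcal R_0$ and $\mathcal R_n$---is essentially the paper's route. The paper's surjectivity proof is precisely your ``standard monomials via bidegree induction'' idea (reducing any $h\in\mathcal A_{i,j}$ by subtracting $k[t]$-multiples of products of $\alpha^t,\beta^t$), while its injectivity proof differs tactically: instead of matching explicit bases on all blocks, the paper first reduces to the corner algebra $e\mathcal Se\to\Gamma(\mathcal R)$ using that $\mathcal S$ has no zero divisors, and then checks that the induced map on associated graded rings is injective (both are $k[\cdot][x,y,z]/(xy-z^{n+1})$). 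This is shorter than producing a full PBW-type basis in each $\mathcal S_{i,j}$, which is what your presentation-matching strategy would require; on the other hand your version, if carried out, would give stronger information (a normal form for elements of $\mathcal S$).

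The fallback via specialization at $t=0$, flatness, and graded Nakayama has a real gap: no $\mathbf Z_{\ge0}$-grading of the system $\{\mathcal R_i\}$ exists that is compatible with the gluing and places the $t_j$ in positive degree. To make $x_iy_i-y_ix_i-t_0$ homogeneous with $\deg t_0>0$ you need $\deg x_i+\deg y_i>0$ on every chart, but the transition rules $x_{i+1}=x_i^2y_i+t_{i+1}x_i$, $y_{i+1}=x_i^{-1}$ force $\deg y_{i+1}=-\deg x_i$, and chasing these constraints around the chain of charts for $n\ge2$ leads to a contradiction (one cannot keep all $\deg x_i,\deg y_i\ge0$ simultaneously). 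So $\mathcal A=\operatorname{End}(\mathcal T)$ is naturally only \emph{filtered}, not graded, over $k[t]$, and since $k[t]$ is not local the bare Nakayama argument does not yield surjectivity of $\phi$ from surjectivity at the closed fibre $t=0$. One could try to replace this with a filtered-to-graded argument, but then one has to prove that passing to associated graded commutes with forming $\operatorname{End}(\mathcal T)$, which requires the same Čech-cohomology vanishing input as the explicit surjectivity proof---at which point the fallback is no longer a shortcut. Also note that the flatness of $\mathcal A$ over $k[t]$ is itself not free: it follows from the vanishing of $\operatorname{\check CHom}^1(\mathcal T,\mathcal T)$ and flatness of the Čech terms, so it already presupposes the hard tilting lemma.
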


We will prove that $\phi$ is injective and surjective in the following subsections.

\subsection{Injectivity of $\phi$}

We call the subalgebra $e \mathcal S e$ of $\mathcal S$ the {\em ring of $G$-invariants}, where 
$e = e^s_0$ (\cite{CBH}). 
We denote the subalgebra $e^t_0 \mathcal A e^t_0 = \text{Hom}(\mathcal R, \mathcal R)$ by
$\Gamma(\mathcal R)$, and call it the {\em ring of global functions}.

\begin{Lem}
The injectivity of $\phi$ is reduced to the injectivity of its restriction 
$\phi_e: e \mathcal S e \to \Gamma(\mathcal R)$.
\end{Lem}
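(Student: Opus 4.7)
The plan is to exploit the block decompositions
\[
\mathcal S = \bigoplus_{i,j=0}^n \mathcal S_{i,j}, \qquad \mathcal A = \bigoplus_{i,j=0}^n \mathcal A_{i,j}
\]
induced by the orthogonal idempotents, and then to transport each block into the $(0,0)$-corner by left and right multiplication by powers of $u$ and $v$.

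First I would observe that, since $\phi(e^s_i) = e^t_i$, the homomorphism $\phi$ respects these decompositions and splits as a direct sum of $k[s_0,\dots,s_n]$-linear maps $\phi_{i,j}: \mathcal S_{i,j} \to \mathcal A_{i,j}$. Consequently, $\phi$ is injective if and only if every $\phi_{i,j}$ is injective, and the task reduces to deducing the injectivity of each $\phi_{i,j}$ from that of $\phi_{0,0} = \phi_e$.

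Next, using the shift identities $e^s_i u = u e^s_{i+1}$ and $e^s_i v = v e^s_{i-1}$ from Lemma \ref{unipotents}, one checks that for $x \in \mathcal S_{i,j}$ the element $u^i x v^j$ lies in $e \mathcal S e$, so
\[
\Phi_{i,j}: \mathcal S_{i,j} \to e \mathcal S e, \qquad x \mapsto u^i x v^j
\]
is a well-defined $k[s_0,\dots,s_n]$-linear map, and likewise one defines $\Phi'_{i,j}: \mathcal A_{i,j} \to \Gamma(\mathcal R)$ by $y \mapsto (u^t)^i y (v^t)^j$. Because $\phi(u) = u^t$ and $\phi(v) = v^t$, the diagram
\[
\begin{CD}
\mathcal S_{i,j} @>{\phi_{i,j}}>> \mathcal A_{i,j} \\
@V{\Phi_{i,j}}VV @VV{\Phi'_{i,j}}V \\
e\mathcal S e @>{\phi_e}>> \Gamma(\mathcal R)
\end{CD}
\]
commutes. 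Granting that $\Phi_{i,j}$ is injective, any $x \in \ker(\phi_{i,j})$ satisfies $\phi_e(\Phi_{i,j}(x)) = \Phi'_{i,j}(\phi_{i,j}(x)) = 0$, whence $\Phi_{i,j}(x) = 0$ by the hypothesis on $\phi_e$, and then $x = 0$.

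The main obstacle is therefore the injectivity of $\Phi_{i,j}$, i.e.\ the statement that $u$ is a left non-zerodivisor and $v$ is a right non-zerodivisor in $\mathcal S$. I would establish this via the PBW-type basis $\{u^a v^b g^c : a,b \ge 0,\ 0 \le c \le n\}$ of $\mathcal S$ over $k[s_0,\dots,s_n]$ coming from the defining relation $uv - vu = \sum_i s_i g^i$ and the twisted product with $G$: with the filtration $\deg(u) = \deg(v) = 1$, $\deg(s_i) = \deg(g) = 0$, the associated graded ring is the ordinary skew group algebra $k[s_0,\dots,s_n][u,v] \# G$, where the non-zerodivisor claims are obvious, and lifting through the filtration then yields the result for $\mathcal S$ itself.
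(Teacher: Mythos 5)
Your proposal is correct and follows essentially the same route as the paper: both decompose $\mathcal S$ and $\mathcal A$ by the idempotents, observe that $\phi$ respects the decomposition, use the filtration $\deg(s_i,g,u,v)=(0,0,1,1)$ with $\mathrm{Gr}(\mathcal S)\cong k[s_0,\dots,s_n][u,v]\#G$ to conclude that $u$ and $v$ are non-zero-divisors, and then transport a nonzero kernel element in $\mathcal S_{i,j}$ into $e\mathcal S e$ by multiplying by $u^i$ on the left and $v^j$ on the right. Your explicit commutative square $\Phi'_{i,j}\circ\phi_{i,j}=\phi_e\circ\Phi_{i,j}$ is merely a more spelled-out version of the paper's remark that $\phi$ is an algebra homomorphism with $\phi(u)=u^t$, $\phi(v)=v^t$.
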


\begin{proof}
Assume that $\text{Ker}(\phi) \ne 0$.
Since $\phi$ preserves idempotents, we have $\phi(e^s_i\mathcal S e^s_j) \subset e^t_i\mathcal A e^t_j$.
Thus there are $i,j$ such that $\text{Ker}(\phi) \cap e^s_i\mathcal S e^s_j \ne 0$.

If we set $\text{deg}(s_i,g,u,v) = (0,0,1,1)$ and introduce a filtration 
$F_p(\mathcal S) = \{x \in \mathcal S \mid \text{deg}(x) \le p\}$, 
then $\text{Gr}(\mathcal S) \cong k[s_0,\dots,s_n][u,v] \# G$.
Hence $\mathcal S$ has no non-zero divisor.
By multiplying $u^i$ and $v^j$ from the left and the right respectively, we obtain 
$\text{Ker}(\phi) \cap e \mathcal S e \ne 0$, because we have
$u^i e^s_i\mathcal S e^s_j v^j \subset e \mathcal S e$.
\end{proof}

We define $x^t_{i,i},y^t_{i,i},z^t_{i,i} \in \mathcal A_{i,i}$ for $0 \le i \le n$ by
\[
x^t_{i,i} = e^t_i(u^t)^{n+1} e^t_i, \,\,\, y^t_{i,i}= e^t_i(v^t)^{n+1} e^t_i, \,\,\, z^t_{i,i} = e^t_i u^tv^t e^t_i.
\]
In particular we denote $x^t = x^t_{0,0}, y^t = y^t_{0,0}, z^t = z^t_{0,0}$. 
We have $x^t,y^t,z^t \in \Gamma(\mathcal R)$.

\begin{Lem}\label{alphabeta2}
(1) 
\[
\begin{split}
&(\alpha^t_{i,i+1} \alpha^t_{i+1,i+2} \dots \alpha^t_{i+n-1,i+n} \alpha^t_{i+n,i+n+1})_n = x_n, \\
&(\beta^t_{i,i-1} \beta^t_{i-1,i-2} \dots \beta^t_{i-n+1,i-n} \beta^t_{i-n,i-n-1})_0 = y_0. \\
\end{split}
\]

(2) 
\[
\begin{split}
&(\alpha^t_{i,i+1}\beta^t_{i+1,i})_0 = \begin{cases} 
x_0y_0, \,\,\, &(i = 0), \\
x_0y_0 + (i-1)t_0 + t_1 + \dots + t_i, \,\,\, &(0 < i \le n), \end{cases} \\
&(\alpha^t_{i,i+1}\beta^t_{i+1,i})_n = \begin{cases} 
x_ny_n - (n-i)t_0 - t_{i+1} - \dots - t_n, \,\,\, &(0 \le i < n), \\
x_ny_n, \,\,\, &(i = n). \end{cases}
\end{split}
\]
\end{Lem}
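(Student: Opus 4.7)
The plan is to reduce each formula to a direct evaluation using the piecewise component expressions of Lemma \ref{alphabeta}, together with the observation that composition of homomorphisms between summands of $\mathcal T$ restricts on each affine chart $\mathcal R_k$ to left multiplication in $\mathcal R_k$ by the product of the component values, taken in the order of composition. Both formulas in (1) are cyclic products of $n+1$ factors covering every residue class mod $n+1$ exactly once, and both formulas in (2) are products of just two factors, so everything will be visible once the correct case of the formula from Lemma \ref{alphabeta} is selected at the relevant chart.

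For part (1), the product $\alpha^t_{i,i+1}\alpha^t_{i+1,i+2}\cdots\alpha^t_{i+n,i+n+1}$ contains, after reducing first indices modulo $n+1$, exactly one factor $\alpha^t_{j,j+1}$ for each $j\in\{0,\dots,n\}$. Reading Lemma \ref{alphabeta} at chart $\mathcal R_n$, we see $(\alpha^t_{j,j+1})_n=1$ whenever $j<n$ (the "$i<k\le n$" case), whereas $(\alpha^t_{n,0})_n=x_n$ (the "$k=i$" case). All factors except one collapse to $1$ inside $\mathcal R_n$, so the entire cyclic product equals $x_n$, independently of the ordering. The second formula is proved by the symmetric observation at chart $\mathcal R_0$: only the factor $\beta^t_{1,0}$ is nontrivial there, contributing $(\beta^t_{1,0})_0=y_0$, and every $\beta^t_{j+1,j}$ with $j>0$ contributes $1$.

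For part (2), I simply substitute the component values. At chart $\mathcal R_0$: if $i=0$ the factors specialize to $x_0$ and $y_0$, yielding $x_0y_0$; if $0<i\le n$ then $(\beta^t_{i+1,i})_0=1$ and the product equals $(\alpha^t_{i,i+1})_0=x_0y_0+(i-1)t_0+t_1+\cdots+t_i$. At chart $\mathcal R_n$ the roles of $\alpha$ and $\beta$ are swapped: for $i<n$ we have $(\alpha^t_{i,i+1})_n=1$ and the product equals $(\beta^t_{i+1,i})_n=x_ny_n-(n-i)t_0-t_{i+1}-\cdots-t_n$; for $i=n$ the product equals $x_n\cdot y_n$.

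The whole argument is bookkeeping; I do not foresee any real obstacle. The only delicate point is the cyclic indexing convention, namely interpreting $\alpha^t_{n,n+1}$ as $\alpha^t_{n,0}$ and likewise for $\beta$, which is what guarantees that each cyclic product in (1) contains exactly one "boundary" factor falling in the case $k=i$ of Lemma \ref{alphabeta} and contributing the generator $x_n$ or $y_0$.
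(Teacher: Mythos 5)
Your proposal is correct and follows the same route as the paper: specialize the piecewise formulas of Lemma~\ref{alphabeta} to the two boundary charts $\mathcal R_0$ and $\mathcal R_n$, observe that each cyclic product in (1) contains exactly one factor not equal to $1$ on the chosen chart, and for (2) simply multiply the two component values. The paper's proof is exactly this tabulation, so there is nothing to add.
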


\begin{proof}
The assertion follows from
\[
\begin{split}
&(\alpha^t_{i,i+1})_0 = \begin{cases} x_0, \,\,\, &(i = 0),  
\\ x_0y_0 + (i-1)t_0 + t_1 + \dots + t_i, \,\,\, &(i \ne 0), \end{cases}, \\
&(\alpha^t_{i,i+1})_n = \begin{cases} 1, \,\,\, &(i \ne n),  \\ x_n, \,\,\, &(i = n), \end{cases} \\
&(\beta^t_{i+1,i})_0 = \begin{cases} y_0, \,\,\, &(i = 0), \\ 1, \,\,\, &(i \ne 0), \end{cases}, \\
&(\beta^t_{i+1,i})_n = \begin{cases} x_ny_n - (n-i)t_0 - t_{i+1} - \dots - t_n, \,\,\, &(i \ne n), 
\\ y_n, \,\,\, &(i = n). \end{cases}
\end{split}
\]  
\end{proof}

\begin{Cor}
\[
\begin{split}
&(x^t)_0 = x_0 \prod_{i=1}^n (x_0y_0 + (i-1)t_0 + t_1 \dots + t_i), \,\,\, (y^t)_0 = y_0, \,\,\, (z^t)_0 = x_0y_0, \\
&(x^t)_n = x_n, \,\,\, (y^t)_n = y_n \prod_{i=1}^n (x_ny_n - (n + 1- i)t_0 - t_i \dots - t_n), \\
&(z^t)_n = x_ny_n - nt_0 - t_1 - \dots - t_n.
\end{split}
\]
\end{Cor}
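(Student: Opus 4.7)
The plan is to reduce the statement directly to Lemma~\ref{alphabeta2}, which already lists the $0$-th and $n$-th components of the products of $\alpha^t$'s and $\beta^t$'s appearing on the right. The only extra input needed beyond that lemma is the observation that $x^t$, $y^t$, and $z^t$ are themselves such products of elementary factors.

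First I would use the identities $\alpha^t_{i,i+1} = e^t_i u^t e^t_{i+1}$ and $\beta^t_{i+1,i} = e^t_{i+1} v^t e^t_i$, combined with the orthogonality $e^t_i e^t_j = \delta_{ij} e^t_i$ of the idempotents, to expand
\[
x^t = e^t_0(u^t)^{n+1}e^t_0 = \alpha^t_{0,1}\alpha^t_{1,2}\cdots\alpha^t_{n,0},
\]
\[
y^t = e^t_0(v^t)^{n+1}e^t_0 = \beta^t_{0,n}\beta^t_{n,n-1}\cdots\beta^t_{1,0},
\]
\[
z^t = e^t_0 u^t v^t e^t_0 = \alpha^t_{0,1}\beta^t_{1,0}.
\]
Under the presentation $h_k(r) = h_k r$, composition of $\mathcal R$-endomorphisms corresponds to the (left) product in $\mathcal R_k$, so the $k$-th component of a composition is the product of the $k$-th components of the factors, taken in the same order. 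Substituting the data of Lemma~\ref{alphabeta2} with $k = 0$ and $k = n$ then yields each of the six claimed identities in a single line.

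The main point requiring care, and essentially the only obstacle, is that $\mathcal R_0$ and $\mathcal R_n$ are non-commutative, so the product notation $\prod$ on the right-hand side needs to be interpreted consistently. Happily the factors $(\alpha^t_{i,i+1})_0 = x_0 y_0 + c_i$ for $i \ge 1$ are all of the form ``polynomial in $x_0 y_0$ plus central scalar,'' and any two such factors commute with each other. Hence $\prod_{i=1}^n\bigl(x_0 y_0 + (i-1)t_0 + t_1 + \cdots + t_i\bigr)$ is unambiguous, and one only needs to keep the non-commuting leading factor $(\alpha^t_{0,1})_0 = x_0$ on the far left, exactly as written. The mirror observation handles $(y^t)_n$, where $y_n$ sits on the left of a product of mutually commuting terms in $x_n y_n$ and the $t_j$. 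Apart from this interpretational point and the cyclic index convention $\alpha^t_{n,n+1} = \alpha^t_{n,0}$, the computation is routine bookkeeping with no further difficulty.
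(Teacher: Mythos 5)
Your proposal is correct and follows the same route as the paper: both expand $x^t,y^t,z^t$ as the compositions $\alpha^t_{0,1}\cdots\alpha^t_{n,0}$, $\beta^t_{0,n}\cdots\beta^t_{1,0}$, $\alpha^t_{0,1}\beta^t_{1,0}$ and read off components, and your remark that the factors beyond the leading $x_0$ (resp.\ $y_n$) are mutually commuting polynomials in $x_0y_0$ (resp.\ $x_ny_n$) matches the paper's remark on order-independence. The only cosmetic point is that for $(x^t)_0$ and $(y^t)_n$ the needed individual-factor data come from Lemma~\ref{alphabeta} (restated inside the proof of Lemma~\ref{alphabeta2}) rather than from the statement of Lemma~\ref{alphabeta2} itself, which only lists the $n$- and $0$-components of the full $\alpha$- and $\beta$-products respectively.
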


We note that the products $\prod_{i=1}^n$ in the above formula do not depend on the order of the factors.

\begin{proof}
Indeed we have
\[
\begin{split}
&e^t_0(u^t)^{n+1} e^t_0 = \alpha^t_{0,1} \alpha^t_{1,2} \dots \alpha^t_{n-1,n} \alpha^t_{n,0}, \\
&e^t_0(v^t)^{n+1} e^t_0 = \beta^t_{0,n} \beta^t_{n,n-1} \dots \beta^t_{2,1} \beta^t_{1,0}, \\
&e^t_0 u^tv^t e^t_0 = \alpha^t_{0,1} \beta^t_{1,0}.
\end{split}
\] 
\end{proof}

\begin{proof}[Proof of the injectivity]
We define 
\[
x^s = e u^{n+1}e, \,\,\, y^s = ev^{n+1}e, \,\,\, z^s = euve.
\]
Then we have 
\[
\phi(x^s) = x^t, \,\,\, \phi(y^s) = y^t, \,\,\, \phi(z^s) = z^t. 
\]

We recall that, if we define a filtration on $\mathcal S$ by setting degrees $\text{deg}(s_i,g,u,v) = (0,0,1,1)$, then 
$\text{Gr}(\mathcal S) \cong k[s_0,\dots,s_n][u,v] \# G$.
We define a filtration on $\mathcal R_0$ by setting degrees 
\[
\text{deg}(t_i,x_0,y_0) = (0,-n+1,n+1).
\]
Then we have $\text{Gr}(\mathcal R_0) \cong k[t_0,\dots,t_n][x_0,y_0]$.
Moreover $\phi_e: e \mathcal S e \to \Gamma(\mathcal R)$ preserves the degrees, so that  
$\phi_e$ induces a homomorphism 
$\text{Gr}(\phi_e): \text{Gr}(e\mathcal S e) \to \text{Gr}(\mathcal R_0)$.

We note that $e\mathcal S e$ is generated by $x^s$, $y^s$ and $z^s$ over $k[s_0,\dots,s_n]$.
The only relation among generators $x^s$, $y^s$, $z^s$ in $\text{Gr}(\mathcal S) \cong k[s_0,\dots,s_n][u,v]^G$ 
is $x^sy^s \equiv (z^s)^{n+1}$.
We also note that the only relation among $(x^t)_0$, $(y^t)_0$, $(z^t)_0$ in $\text{Gr}(\mathcal R_0)$ 
is $(x^t)_0(y^t)_0 \equiv (z^t)_0^{n+1}$.
Therefore $\text{Gr}(\phi_e)$ is injective, hence so is $\phi_e$.
\end{proof}

\subsection{Surjectivity of $\phi$}

We will prove the surjectivity.
First we consider a $k[t_0,\dots,t_n]$-algebra $\mathcal A_{i,i} = \text{End}(\mathcal R(-D_i))$ for $0 \le i \le n$.
We define $x^t_{i,i},y^t_{i,i},z^t_{i,i} \in \mathcal A_{i,i}$ by
\[
x^t_{i,i} = e^t_i(u^t)^{n+1} e^t_i, \,\,\, y^t_{i,i}= e^t_i(v^t)^{n+1} e^t_i, \,\,\, z^t_{i,i} = e^t_i u^tv^t e^t_i.
\]
These elements are contained in the image of $\phi$.
We claim that $\mathcal A_{i,i}$ is generated by $x^t_{i,i}, y^t_{i,i}, z^t_{i,i}$
as an algebra over $k[t_0,\dots,t_n]$.

\begin{Lem}
\[
\begin{split}
&(y^t_{i,i})_0 = y_0, \,\,\, (z^t_{i,i})_0 = \begin{cases} x_0y_0, \,\,\, &(i = 0), \\
x_0y_0 + (i-1)t_0 + t_1 + \dots + t_i, \,\,\, &(0 < i \le n), \end{cases} \\
&(x^t_{i,i})_n = x_n, \,\,\, (z^t_{i,i})_n = \begin{cases} 
x_ny_n - (n-i)t_0 - t_{i+1} - \dots - t_n, \,\,\, &(0 \le i < n), \\
x_ny_n, \,\,\, &(i = n). \end{cases}
\end{split}
\]
\end{Lem}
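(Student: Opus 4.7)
The plan is to reduce everything to Lemma \ref{alphabeta} by expanding each of $x^t_{i,i}$, $y^t_{i,i}$, $z^t_{i,i}$ as a product of $\alpha^t$'s and $\beta^t$'s and then reading off the relevant components.

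First, I would observe that since $u^t = \sum_j \alpha^t_{j,j+1}$ and $\alpha^t_{p,p+1}\alpha^t_{q,q+1}$ is nonzero only when $q = p+1$, the definition $x^t_{i,i} = e^t_i(u^t)^{n+1}e^t_i$ collapses to the single cyclic product
\[
x^t_{i,i} = \alpha^t_{i,i+1}\,\alpha^t_{i+1,i+2}\cdots\alpha^t_{i+n,i+n+1}
\]
with indices taken mod $n+1$; similarly $y^t_{i,i} = \beta^t_{i,i-1}\beta^t_{i-1,i-2}\cdots\beta^t_{i-n,i-n-1}$, and trivially $z^t_{i,i} = \alpha^t_{i,i+1}\beta^t_{i+1,i}$. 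Next, at each chart $\mathcal R_k$ a right-$\mathcal R_k$-endomorphism of $\mathcal R_k$ is left multiplication by its value at $1$, so composition in $\mathcal A$ becomes multiplication in $\mathcal R_k$: the $k$th component of a product of homomorphisms is the product (in $\mathcal R_k$) of the $k$th components of the factors.

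It then suffices to plug in. For $(x^t_{i,i})_n$, Lemma \ref{alphabeta} gives $(\alpha^t_{j,j+1})_n = 1$ when $j<n$ and $= x_n$ when $j = n$, and the cyclic product contains exactly one factor with $j = n$ (namely $\alpha^t_{n,0}$), so $(x^t_{i,i})_n = x_n$. Symmetrically, Lemma \ref{alphabeta} gives $(\beta^t_{j+1,j})_0 = 1$ when $j>0$ and $= y_0$ when $j = 0$, yielding $(y^t_{i,i})_0 = y_0$. For $z^t_{i,i}$, I would substitute directly: $(z^t_{i,i})_0 = (\alpha^t_{i,i+1})_0 \cdot (\beta^t_{i+1,i})_0$ and $(z^t_{i,i})_n = (\alpha^t_{i,i+1})_n \cdot (\beta^t_{i+1,i})_n$, reading the four values from Lemma \ref{alphabeta} and splitting on the cases $i = 0$, $0 < i < n$, $i = n$ to recover the stated formulas.

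There is no genuine obstacle: the argument is pure bookkeeping of the cyclic shift of indices and of the case division for $z^t_{i,i}$. The one point that deserves a sentence of care in the write-up is justifying why only one factor in the cyclic product is nontrivial at the extreme chart $k = 0$ (resp.\ $k = n$) and why noncommutativity of $\mathcal R_k$ therefore causes no problem — all other factors being equal to $1$, the order of multiplication is immaterial.
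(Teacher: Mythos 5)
Your proposal is correct and matches the paper's argument exactly: the paper's proof simply records the cyclic product expansions $x^t_{i,i}=\alpha^t_{i,i+1}\cdots\alpha^t_{i+n,i}$, $y^t_{i,i}=\beta^t_{i,i-1}\cdots\beta^t_{i+1,i}$, $z^t_{i,i}=\alpha^t_{i,i+1}\beta^t_{i+1,i}$, and leaves the reader to read off the chart-$0$ and chart-$n$ components from the earlier lemma computing $(\alpha^t_{i,i+1})_k$ and $(\beta^t_{i+1,i})_k$, precisely as you do. Your explicit remarks about the idempotent orthogonality collapsing $(u^t)^{n+1}$ to a single cyclic product and about componentwise multiplication under composition are the implicit steps the paper is suppressing.
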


\begin{proof}
Indeed we have
\[
\begin{split}
&e^t_i(u^t)^{n+1} e^t_i = \alpha^t_{i,i+1} \alpha^t_{i+1,i+2} \dots \alpha^t_{i+n-1,i+n} \alpha^t_{i+n,i}, \\
&e^t_i(v^t)^{n+1} e^t_i = \beta^t_{i,i-1} \beta^t_{i-1,i-2} \dots \beta^t_{i+2,i+1} \beta^t_{i+1,i}, \\
&e^t_i u^tv^t e^t_i = \alpha^t_{i,i+1} \beta^t_{i+1,i}.
\end{split}
\]
\end{proof}

\begin{proof}[Proof of the surjectivity]
We first prove the surjectivity of $\phi$ for $\mathcal A_{i,i}$. 
We take an arbitrary $h \in \mathcal A_{i,i} = \text{Hom}(\mathcal R(-D_i),\mathcal R(-D_i))$, 
and let $r_k = h_k(1)$.  
We define degree by $\text{deg}(t_j, x_k,y_k) = (0,1,-1)$.
Then all relations in $\mathcal R_k$ as well as gluings for $\mathcal R(-D_i)$ are homogeneous.
Hence we may assume that the $r_k$ are homogeneous 
with a fixed degree $\text{deg}(r)$.

We claim that, if $\text{deg}(r) \le 0$, then we can express $h$ by using $y_t^{(i)}, z_t^{(i)} \in \mathcal A_{i,i}$
with coefficients in $k[t_0, \dots, t_n]$. 
For this purpose, we look at the affine chart $\mathcal R_0$.
We consider the secondary grading in $\mathcal R_0$ defined by $\text{deg'}(t_j, x_0,y_0) = (0,1,1)$.
Then we can write
\[
r_0 = a_0(x_0y_0)^by_0^c + \text{ lower deg' terms}
\]
for $a_0 \in k[t_0,\dots,t_n]$ and $b,c \ge 0$ using the commutation relation $x_0y_0 = y_0x_0 + t_0$, 
because $\text{deg}(r) \le 0$.
Let $h' = h - a_0(z^t_{i,i})^b(y^t_{i,i})^c \in \mathcal A_{i,i}$.
Then $h'$ is again homogeneous of degree $-c$, and the secondary degree is less than $2b + c$.
Therefore we infer that $h$ is expressed by $y^t_{i,i},z^t_{i,i}$ using induction on the secondary degree.

We assume next that $\text{deg}(r) > 0$.
We consider the secondary grading in $\mathcal R_n$ defined by $\text{deg''}(t_j,x_n,y_n) = (0,1,1)$.
Then we can write
\[
r_n = a_nx_n^b(x_ny_n)^c + \text{ lower deg'' terms}
\]
for $a_n \in k[t_0,\dots,t_n]$ and $b,c \ge 0$ using the commutation relation $x_ny_n = y_nx_n + t_0$, 
because $\text{deg}(r) > 0$.
Let $h'' = h - a_n(x^t_{i,i})^b(z^t_{i,i})^c \in \mathcal A_{i,i}$.
Then $h''$ is again homogeneous of degree $b$, and secondary degree less than $b + 2c$.
Therefore we conclude that $h$ is expressed by $x^t_{i,i},z^t_{i,i}$ using induction on the secondary degree.

\vskip 1pc

Now we consider $\mathcal A_{i,j}$ for $i \ne j$.
We take $h \in \mathcal A_{i,j}$ and let $r_k = h_k(1)$ as before.
We may assume that the $r_k$ are homogeneous, but the gluing does not preserve the degree.
Indeed we have 
\[
\text{deg}(r_n) = \text{deg}(r_0) + \begin{cases} 0, \,\,\, &(i,j \ne 0), \\
-1, \,\,\, &(i = 0), \\
1, \,\,\, &(j = 0).  \end{cases} 
\] 

Assume first that (i) or (ii) holds:
\begin{enumerate}
\item[(i)] $\text{deg}(r_0) \le 0$. 
$0 \ne j < i$ or $i = 0$.

\item[(ii)] $\text{deg}(r_0) < 0$. 
$0 \ne i < j$ or $j = 0$.
\end{enumerate}

We have 
\[
(\beta^t_{i,i-1} \dots \beta^t_{j+1,j})_0 = \begin{cases} 1, \,\,\, &(0 < j < i \text{ or } i = 0), \\
y_0, \,\,\, &(0 < i < j \text{ or } j = 0). \end{cases}
\]
We can write
\[
r_0 = a_0(x_0y_0)^by_0^c + \text{ lower deg' terms}
\]
with $a_0 \in k[t_0,\dots,t_n]$, $b \ge 0$, and (i) $c \ge 0$ or (ii) $c > 0$.
We set 
\[
h' = h - a_0(z^t_{i,i})^b(y^t_{i,i})^{c'}\beta^t_{i,i-1} \dots \beta^t_{j+1,j} \in \mathcal A_{i,j}
\]
where (i) $c' = c$, or (ii) $c' = c - 1$.
Then $h'_0$ is again homogeneous of degree $-c$, and the secondary degree is less than $2b + c$.
Therefore we infer that $h$ is expressed by $y^t_{i,i},z^t_{i,i}$ and the $\beta_{k+1,k}$
using induction on the secondary degree.

Assume next that (i') or (ii') holds:
\begin{enumerate}
\item[(i')] $\text{deg}(r_n) \ge 0$. 
$i < j$ or $j = 0$.

\item[(ii')] $\text{deg}(r_n) > 0$. 
$0 \ne j < i$.
\end{enumerate}

We note that the cases (i), (ii), (i'), (ii') exhaust all cases.
Indeed, in the case $i,j \ne 0$, if $j < i$, then (i) or (ii') holds, and if $i < j$, then (i') or (ii) holds. 
In the case $i = 0$, if (i) does not hold, then then (i') holds.
In the case $j = 0$, if (ii) does not hold, then (i') holds.

We have 
\[
(\alpha^t_{i,i+1} \dots \alpha^t_{j-1,j})_n = \begin{cases} 1, \,\,\, &(i < j), \\
x_n, \,\,\, &(j < i). \end{cases}
\]
We can write
\[
r_n = a_n(x_ny_n)^bx_n^c + \text{ lower deg'' terms}
\]
with $a_n \in k[t_0,\dots,t_n]$, $b \ge 0$, and (i') $c \ge 0$ or (ii') $c > 0$.
We set 
\[
h' = h - a_n(z^t_{i,i})^b(x^t_{i,i})^{c'}\alpha^t_{i,i+1} \dots \alpha^t_{j-1,j} \in \mathcal A_{i,j}
\]
where (i) $c' = c$, or (ii) $c' = c - 1$.
Then $h'_n$ is again homogeneous of degree $c$, and the secondary degree is less than $2b + c$.
Therefore we infer that $h$ is expressed by $x^t_{i,i},z^t_{i,i}$ and the $\alpha_{k,k+1}$
using induction on the secondary degree.
\end{proof}

\begin{Rem}
We have $s_0 = \frac 1{n+1} \sum_{j=0}^n w_j = t_0$.
The commutative deformations of $S$ and $X$ are defined by $s_0 = 0$ and $t_0 = 0$ respectively.
Thus the parameters $s_0$ and $t_0$ correspond to the direction of purely non-commutative deformations.
\end{Rem}

\section{Derived equivalence}

Let $\mathcal T$ be a coherent locally free $\mathcal R$-module.
It is called a {\em tilting generator} of $D^b(\text{coh}(\mathcal R))$ if the following conditions are 
satisfied:

(1) (tilting) $\text{Hom}(\mathcal T, \mathcal T[m]) = 0$ for $m \ne 0$.

(2) (generator) For any object $x \in D^-(\text{coh}(\mathcal R))$, if $\text{Hom}(\mathcal T, x[m]) = 0$ for
all integers $m$, then $x \cong 0$.

\begin{Thm}
$\mathcal T = \mathcal R \oplus \bigoplus_{i=1}^n \mathcal R(-D_i)$ is a tilting generator
of $D^-(\text{coh}(\mathcal R))$.
\end{Thm}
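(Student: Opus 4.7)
The tilting condition $\text{Hom}(\mathcal T,\mathcal T[m])=0$ for $m\neq 0$ has already been established as the Corollary ending \S 4.3, so only the generator condition remains. My plan is to first place every divisorial sheaf $\mathcal R(\sum m_iD_i)$ inside the thick (triangulated, Karoubi-closed) subcategory $\langle\mathcal T\rangle$ by iterated application of Lemma \ref{SES}; next to observe that the hypothesis $\text{Hom}(\mathcal T,x[m])=0$ for every $m$ therefore propagates to every divisorial sheaf in place of $\mathcal T$; and finally to combine this with the Ext-vanishing of \S 4.3 and with Lemma \ref{generator} to produce a contradiction when $x\neq 0$.

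For the first step, the summands $\mathcal R$ and the $\mathcal R(-D_i)$ of $\mathcal T$ provide the base. The distinguished triangle
\[
\mathcal R(\textstyle\sum d_iD_i)\to \mathcal R(\sum d_iD_i+D_j)\oplus \mathcal R(\sum d_iD_i+D_k)\to \mathcal R(\sum d_iD_i+D_j+D_k)\to
\]
from Lemma \ref{SES} forces the third of its three terms to lie in $\langle\mathcal T\rangle$ whenever the other two do (using Karoubi closure to split the direct sum). Applying the $j\neq k$ version first and inducting on $|S|$ gives $\mathcal R(-\sum_{i\in S}D_i)$ for every $S\subseteq\{1,\dots,n\}$; then with $j=k$ the triangle becomes $\mathcal R(D)\to\mathcal R(D+D_j)^{\oplus 2}\to\mathcal R(D+2D_j)\to$, which, starting from two consecutive known points, propagates in both directions along the $j$-th coordinate axis. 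Iterating axis by axis delivers every $\mathcal R(\sum m_iD_i)$ for $(m_1,\dots,m_n)\in\mathbf Z^n$.

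Now let $x\in D^-(\text{coh}(\mathcal R))$ satisfy $\text{Hom}(\mathcal T,x[m])=0$ for all $m$, and suppose for contradiction $x\neq 0$; let $N$ be the largest integer with $H^N(x)\neq 0$. Because every divisorial sheaf is in $\langle\mathcal T\rangle$, the hypothesis gives $\text{Hom}^*(\mathcal D,x)=0$ for every divisorial $\mathcal D$. The Corollary in \S 4.3 yields $\text{Hom}^p(\mathcal D,\mathcal N)=0$ for $p\notin\{0,1\}$ and any quasi-coherent $\mathcal N$, so the standard hypercohomology spectral sequence for $\text{Hom}(\mathcal D,\tau^{<N}x)$ (supported in only two columns $p\in\{0,1\}$ with bounded-above rows $q<N$) forces $\text{Hom}^{N+1}(\mathcal D,\tau^{<N}x)=0$. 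Applied to the truncation triangle $\tau^{<N}x\to x\to H^N(x)[-N]\to$, the long exact Hom-sequence shows that the edge map
\[
\text{Hom}^N(\mathcal D,x)\to \text{Hom}(\mathcal D,H^N(x))
\]
is surjective, and since the left-hand side vanishes we obtain $\text{Hom}(\mathcal D,H^N(x))=0$ for every divisorial $\mathcal D$. But Lemma \ref{generator} furnishes integers $d_i\gg 0$ with $\text{Hom}(\mathcal R(-\sum d_iD_i),H^N(x))\neq 0$, contradicting the case $\mathcal D=\mathcal R(-\sum d_iD_i)$.

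The step I expect to require the most care is the lattice induction of the first part: the two moves provided by Lemma \ref{SES} do not uniformly reduce any obvious norm on $\mathbf Z^n$ when positive and negative coordinates coexist, so the argument must be organized carefully, e.g.\ by first covering the $2^n$ base divisors $\mathcal R(-\sum_{i\in S}D_i)$, then extending along one coordinate at a time using the $j=k$ triangle from two adjacent base points, and finally building up the lattice dimension by dimension. The secondary convergence point for the hypercohomology spectral sequence is mild, since only two columns $p\in\{0,1\}$ are nonvanishing and the rows are bounded above.
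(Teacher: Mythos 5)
Your argument is correct and follows the same strategy as the paper's proof: use Lemma~\ref{SES} to transport the vanishing $\text{Hom}(\mathcal T, x[\ast]) = 0$ to every divisorial sheaf, then derive a contradiction from Lemma~\ref{generator} at the top nonvanishing cohomology of $x$. The only differences are in packaging --- the paper replaces $x$ by an explicit bounded-above complex with nonsurjective final differential and invokes Lemma~\ref{generator1} directly, whereas you use the truncation triangle at degree $N$ together with the corollary that $\text{Hom}^p(\mathcal D, -)$ vanishes for $p \neq 0, 1$; these produce the same surjectivity of $\text{Hom}^N(\mathcal D, x) \to \text{Hom}(\mathcal D, H^N(x))$. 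You also make explicit the lattice induction deducing all $\mathcal R(\sum m_i D_i)$ from the summands of $\mathcal T$ via Lemma~\ref{SES}, a step the paper leaves unstated.
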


\begin{proof}
We know already that $\mathcal T$ is a tilting object.
We will prove that, for any non-zero object $x \in D^-(\text{coh}(\mathcal R))$, 
there exists an integer $m$ such that $\text{Hom}(\mathcal T, x[m]) \ne 0$. 
$x$ is quasi-isomorphic to the following type of complex
\[
\begin{CD}
\dots @>>> \mathcal M^{p-1} @>h>> \mathcal M^p @>>> 0
\end{CD}
\]
such that $h$ is not surjective. 
By Lemmas \ref{generator} and \ref{generator1}, there exists a divisorial sheaf 
$\mathcal D = \mathcal R(\sum d_iD_i)$ such that $\text{Hom}(\mathcal D, \text{Coker}(h)) \ne 0$
and $\text{Hom}(\mathcal D, \text{Im}(h)[1]) = 0$.
Then it follows that there is a homomorphism $\mathcal D \to \mathcal M^p$ which does not factor 
through $h$.
Therefore we have $\text{Hom}(\mathcal D, x[p]) \ne 0$.

On the other hand, assume that $\text{Hom}(\mathcal T, x[m]) = 0$ for all $m$.
Then by Lemma \ref{SES}, we have $\text{Hom}(\mathcal D, x[m]) = 0$ for all divisorial sheaves $\mathcal D$, 
a contradiction.
Therefore we have our claim.
\end{proof}

Let $\mathcal A = \text{End}(\mathcal T)$ be the endomorphism algebra.
Since it is isomorphic to $\mathcal S$, it is a Noetherian algebra.

\begin{Lem}\label{coherent}
Let $M$ be a coherent module on $\mathcal X$.
Then $\text{Hom}(\mathcal T, \mathcal M[p])$ is a 
finitely generated right $\mathcal A$-modules for $p = 0,1$, and 
$\text{Hom}(\mathcal T, \mathcal M[p]) = 0$ for $p \ne 0,1$.
\end{Lem}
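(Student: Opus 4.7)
The plan is to split the assertion into vanishing and finite generation, reducing the finite generation first to divisorial sheaves via Lemma \ref{generator1} and then to the summands of $\mathcal T$ via Lemma \ref{SES}, repeatedly exploiting that $\mathcal A \cong \mathcal S$ is Noetherian.

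First I would dispatch the vanishing for $p \ne 0, 1$. Since $\mathcal T = \mathcal R \oplus \bigoplus_{i=1}^n \mathcal R(-D_i)$ is a finite direct sum of divisorial sheaves, the corollary established above (that $\text{Hom}^p_{\mathcal X}(\mathcal D, \mathcal N) = 0$ for any divisorial sheaf $\mathcal D$, any quasi-coherent $\mathcal N$, and $p \ne 0, 1$) applies to each summand and yields the desired vanishing.

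For finite generation, given coherent $\mathcal M$, Lemma \ref{generator1}(2) supplies a short exact sequence $0 \to \mathcal K \to \mathcal L \to \mathcal M \to 0$ with $\mathcal L$ a finite direct sum of divisorial sheaves and $\mathcal K$ coherent (by the Noetherianity of the $\mathcal R_i$). Applying $R\text{Hom}_{\mathcal X}(\mathcal T, -)$ and using the vanishing above truncates the associated long exact sequence to six terms, in which $\text{Hom}(\mathcal T, \mathcal M)$ and $\text{Hom}^1(\mathcal T, \mathcal M)$ appear as quotients of $\text{Hom}(\mathcal T, \mathcal L)$ and $\text{Hom}^1(\mathcal T, \mathcal L)$ respectively. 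Since $\mathcal A$ is Noetherian, quotients of finitely generated $\mathcal A$-modules remain finitely generated, so it is enough to prove the lemma when $\mathcal M$ is a single divisorial sheaf.

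For $\mathcal D = \mathcal R(\sum d_i D_i)$, I would induct on a well-founded measure of $(d_i)$, using Lemma \ref{SES} as the inductive engine. The base cases are exactly the direct summands $\mathcal R, \mathcal R(-D_1), \dots, \mathcal R(-D_n)$ of $\mathcal T$: on them $\text{Hom}(\mathcal T, -)$ yields the finitely generated projective right $\mathcal A$-module $e^t_i \mathcal A$, while $\text{Hom}^1(\mathcal T, -) = 0$ by the tilting property. The induction proceeds in three phases: (i) shrink every positive $d_i$ to zero; (ii) raise every $d_i \le -2$ up to $-1$; and (iii) reduce the number of indices with $d_i = -1$ to at most one. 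In each phase an appropriate choice of $(j, k)$ in Lemma \ref{SES} places $\mathcal D$ in one of the four slots of the short exact sequence, the other three divisorial sheaves having strictly smaller measure; the truncated six-term LES together with Noetherianity of $\mathcal A$ then propagates finite generation from those three back to $\mathcal D$.

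The hard part will be orchestrating the three-phase induction so that a well-founded measure, for example the triple $(\max_i d_i,\ -\min_i d_i,\ \#\{i : d_i = -1\})$ in lexicographic order, strictly decreases at each step without falling back into an earlier phase. Special care is required at the borderline values $d_i = 1$ and $d_i = -2$, where a naive application of Lemma \ref{SES} can reintroduce coefficients of the kind just eliminated, so the indices $j$ and $k$ must be tuned to the current phase.
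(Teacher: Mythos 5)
The overall strategy is the same as the paper's: vanishing for $p \ne 0,1$ from the corollary about divisorial sheaves, finite generation on the summands of $\mathcal T$ from Noetherianity of $\mathcal A$ and tilting, propagation to all divisorial sheaves via Lemma~\ref{SES}, and then to all coherent modules via a presentation by divisorial sheaves. However, there is a genuine gap in your reduction step. From the short exact sequence $0 \to \mathcal K \to \mathcal L \to \mathcal M \to 0$, the six-term sequence reads
\[
0 \to \text{Hom}(\mathcal T, \mathcal K) \to \text{Hom}(\mathcal T, \mathcal L) \to \text{Hom}(\mathcal T, \mathcal M)
\to \text{Hom}^1(\mathcal T, \mathcal K) \to \text{Hom}^1(\mathcal T, \mathcal L) \to \text{Hom}^1(\mathcal T, \mathcal M) \to 0,
\]
so $\text{Hom}^1(\mathcal T, \mathcal M)$ is indeed a quotient of $\text{Hom}^1(\mathcal T, \mathcal L)$, but $\text{Hom}(\mathcal T, \mathcal M)$ is \emph{not} a quotient of $\text{Hom}(\mathcal T, \mathcal L)$: it maps onto the kernel of $\text{Hom}^1(\mathcal T, \mathcal K) \to \text{Hom}^1(\mathcal T, \mathcal L)$, and $\mathcal K$ is merely coherent, not divisorial. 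Thus proving the lemma for divisorial sheaves does not by itself give $\text{Hom}(\mathcal T, \mathcal M)$ for general coherent $\mathcal M$. The fix is the two-pass argument the paper uses: first establish that $\text{Hom}^1(\mathcal T, \mathcal M)$ is finitely generated for \emph{every} coherent $\mathcal M$ (this only needs the quotient description), then apply that to $\mathcal K$ to control $\text{Hom}^1(\mathcal T, \mathcal K)$, and only then conclude for $\text{Hom}(\mathcal T, \mathcal M)$ as an extension of a finitely generated submodule of $\text{Hom}^1(\mathcal T, \mathcal K)$ by a quotient of $\text{Hom}(\mathcal T, \mathcal L)$, invoking Noetherianity for the submodule.

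On the inductive propagation to all divisorial sheaves: this is the right idea, and the paper itself leaves the bookkeeping implicit, so your sketched three-phase scheme with a lexicographic measure is a reasonable attempt to fill it in. But as you yourself note, it is not yet a proof: you should fix a concrete measure and check that each application of Lemma~\ref{SES} places the target sheaf in a slot where the other two (whichever two you happen to know) have strictly smaller measure, treating the cases $d_i = 1$ and $d_i = -2$ explicitly. A simpler organization is to handle the all-nonpositive exponents first by descending induction on $-\sum_i d_i$ (using Lemma~\ref{SES} with $j,k$ chosen among indices with $d_j, d_k \le -1$ so that the two known sheaves both have strictly larger $\sum d_i$, starting from $\mathcal R$ and the $\mathcal R(-D_i)$), and then increase positive exponents two at a time by taking $j = k$. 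Please also note that from the six-term sequence, knowing any two of the three divisorial sheaves in Lemma~\ref{SES} suffices to deduce the third — for each of $\text{Hom}$ and $\text{Hom}^1$, the unknown term is an extension of a submodule of a known finitely generated module by a quotient of another, and Noetherianity of $\mathcal A$ closes the argument.
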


\begin{proof}
We know already that $\text{Hom}(\mathcal T, \mathcal M[p]) = 0$ for $p \ne 0,1$.

$\text{Hom}(\mathcal T, \mathcal R)$ and $\text{Hom}(\mathcal T, \mathcal R(-D_i))$ are 
right ideals of the Noetherian ring $\mathcal A$, hence finitely generated.
We know also already that 
$\text{Hom}(\mathcal T, \mathcal R[1]) = \text{Hom}(\mathcal T, \mathcal R(-D_i)[1]) = 0$.

By the exact sequences in Lemma \ref{SES}, we can deduce inductively that 
$\text{Hom}(\mathcal T, \mathcal M[p])$ are finitely generated 
for all divisorial sheaves $\mathcal M$.

Finally let $\mathcal M$ be an arbitrary coherent module.
Then there is an exact sequence $0 \to \mathcal N \to \mathcal P \to \mathcal M \to 0$ where 
$\mathcal P$ is a finite direct sum of divisorial sheaves.
First it follows that $\text{Hom}(\mathcal T, \mathcal M[1])$ is finitely generated.
Then $\text{Hom}(\mathcal T, \mathcal N[1])$ is also finitely generated because $\mathcal M$ is arbitrary.
Then it follows that $\text{Hom}(\mathcal T, \mathcal M)$ is also finitely generated.
\end{proof}


Now we can define the following functors:
\[
\begin{split}
&\Phi: D^-(\text{coh}(\mathcal X)) \to D^-(\text{mod-}\mathcal A), \,\,\, \Phi(\bullet) 
= R\text{Hom}_{\mathcal X}(\mathcal T, \bullet), \\
&\Psi: D^-(\text{mod-}\mathcal A) \to D^-(\text{coh}(\mathcal X)), \,\,\, \Psi(\bullet) 
= \bullet \otimes_{\mathcal A}^{\mathbf L} \mathcal T.
\end{split}
\]

By the same argument as in \cite{TU} Lemma 3.3, we prove the following:

\begin{Thm}[\cite{Bondal}, \cite{Rickard}, \cite{TU}]
(1) $\Phi$ is an equivalence and $\Psi$ is its quasi-inverse.

(2) $\Phi$ induces an equivalence $D^b(\text{coh}(\mathcal X)) \to D^b(\text{mod-}\mathcal A)$.
\end{Thm}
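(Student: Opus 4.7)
The plan is to follow the standard Bondal--Rickard tilting argument, adapted to the NC setting as in \cite{TU}. I first establish the equivalence at the level of $D^-$, then use the finiteness of Lemma \ref{coherent} to descend to $D^b$.

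For the $D^-$ equivalence, $\Psi = (-) \otimes^{\mathbf L}_{\mathcal A} \mathcal T$ is left adjoint to $\Phi = R\text{Hom}_{\mathcal X}(\mathcal T, -)$, with unit $\eta \colon \text{id} \to \Phi \Psi$ and counit $\varepsilon \colon \Psi \Phi \to \text{id}$. On the generating objects one has $\Psi(\mathcal A) = \mathcal T$ and $\Phi(\mathcal T) \cong \mathcal A$, concentrated in degree zero by the tilting property just established, so $\eta_{\mathcal A}$ and $\varepsilon_{\mathcal T}$ are isomorphisms. The full subcategory of $D^-(\text{mod-}\mathcal A)$ on which $\eta$ is an isomorphism is triangulated, closed under shifts and direct summands, and contains $\mathcal A$; since $\mathcal A$ is Noetherian, every object of $D^-(\text{mod-}\mathcal A)$ is quasi-isomorphic to a bounded-above complex of finitely generated free $\mathcal A$-modules, so a standard truncation argument shows this subcategory equals $D^-(\text{mod-}\mathcal A)$. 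For $\varepsilon$, let $C(x) = \text{cone}(\varepsilon_x)$ for $x \in D^-(\text{coh}(\mathcal X))$. Applying $\Phi$ to the defining triangle and using that $\eta_{\Phi(x)}$ is already an isomorphism forces $\Phi(C(x)) = 0$, i.e.\ $\text{Hom}(\mathcal T, C(x)[m]) = 0$ for all $m$. The generator property of $\mathcal T$ proved just above then forces $C(x) \cong 0$, and $\varepsilon_x$ is an isomorphism. This yields (1).

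For (2), Lemma \ref{coherent} shows $\Phi$ sends $D^b(\text{coh}(\mathcal X))$ into $D^b(\text{mod-}\mathcal A)$, since $\text{Hom}(\mathcal T, \mathcal M[p])$ is finitely generated over $\mathcal A$ and vanishes outside $p = 0, 1$ for coherent $\mathcal M$. Conversely, via the isomorphism $\phi \colon \mathcal S \to \mathcal A$ of Section 5, $\mathcal A$ inherits finite global dimension from the NCCR $\mathcal S$, so every finitely generated $\mathcal A$-module admits a bounded resolution by finite free $\mathcal A$-modules; applying $\Psi$ produces a bounded complex of summands of $\mathcal T$, which is coherent. Hence the equivalence of (1) restricts to the desired equivalence $D^b(\text{coh}(\mathcal X)) \to D^b(\text{mod-}\mathcal A)$.

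The main obstacle is not the Bondal--Rickard recipe itself but verifying that it goes through in our NC-scheme setting: one needs that $\Phi$ commutes with the bounded-above homotopy colimits used to resolve objects of $D^-(\text{coh}(\mathcal X))$, and that the generator property proved in Section 4 really suffices to kill $C(x)$. Both points are the content of \cite{TU} Lemma 3.3 in a closely analogous situation, and they rest on the Grothendieck-category structure of $\text{Qcoh}(\mathcal X)$ together with the explicit resolutions by divisorial sheaves produced in Section 4; the adaptation to our $\mathcal X$ should be essentially mechanical.
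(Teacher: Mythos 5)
Your proof of part (1) is essentially the paper's: the adjunction, the verification that the unit is an isomorphism on free $\mathcal A$-modules and hence on all of $D^-(\text{mod-}\mathcal A)$, and the cone-killing argument via the generator property all match the argument in the paper (which follows \cite{TU} Lemma~3.3).

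For part (2) you take a genuinely different route on the harder half, and it contains a gap. Both you and the paper handle $\Phi(D^b(\text{coh}(\mathcal X))) \subset D^b(\text{mod-}\mathcal A)$ via Lemma~\ref{coherent}. For the converse, the paper uses a truncation argument: for $y \in D^b(\text{mod-}\mathcal A)$, it considers the natural map $\alpha\colon \tau_{<-m}\Psi(y) \to \Psi(y)$; since $\Phi$ is cohomologically bounded of amplitude $[0,1]$, the map $\Phi(\alpha)\colon \Phi(\tau_{<-m}\Psi(y)) \to \Phi\Psi(y)\cong y$ must vanish for $m$ large (the sources and targets live in disjoint degree ranges), and then by the equivalence already proved $\alpha = 0$, forcing $\tau_{<-m}\Psi(y) = 0$. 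This needs nothing beyond what has been established. You instead appeal to finite global dimension of $\mathcal A \cong \mathcal S$ to produce bounded projective resolutions. That claim is not proved in the paper, and it is not as immediate as you suggest: $\mathcal S$ is the \emph{deformation} of the NCCR $S = k[u,v]\#G$ over $k[s_0,\dots,s_n]$, not the NCCR itself, so you cannot quote the finite global dimension of $S$; you would need a separate argument (for instance via the filtration with $\text{Gr}(\mathcal S)\cong k[s_0,\dots,s_n][u,v]\#G$). There is also a smaller imprecision: you write ``bounded resolution by finite free $\mathcal A$-modules,'' but finitely generated projectives over $\mathcal A$ need not be free; you should work with finitely generated projectives, which still suffices since $\Psi$ of a summand of $\mathcal A^{\oplus k}$ is a summand of $\mathcal T^{\oplus k}$. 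With the global-dimension claim proved your route would close part (2), but as written it is incomplete, whereas the paper's truncation argument requires no extra input.
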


\begin{proof}
This is the same as the proof of \cite{TU} Lemma 3.3.
We recall it briefly for the convenience of the reader.

(0) $\Psi$ is the left adjoint of $\Phi$:
\[
R\text{Hom}_{\mathcal X}(y \otimes_{\mathcal A}^{\mathbf L} \mathcal T, x) \cong 
R\text{Hom}_{\mathcal A}(y, R\text{Hom}_{\mathcal X}(\mathcal T,x)).
\]

(1) First we see that $\Phi\Psi \cong \text{Id}_{D^-(\text{mod-}\mathcal A)}$, i.e., 
\[
R\text{Hom}(\mathcal T, \bullet \otimes_{\mathcal A}^{\mathbf L} \mathcal T)
\cong \bullet.
\]
This is true for free modules, hence for all $D^-(\text{mod-}\mathcal A)$.

In order to prove that the adjunction morphism $\Psi\Phi \to \text{Id}_{D^-(\text{coh}(\mathcal X))}$ is also an isomorphism, 
we use the generator condition 
that $\Phi(c) \cong 0$ implies $c \cong 0$ for $c \in D^-(\text{coh}(\mathcal X))$.
From a distinguished triangle 
\[
\Psi\Phi(x) \to x \to c \to \Psi\Phi(x)[1]
\]
and $\Phi\Psi\Phi(x) \cong \Phi(x)$, we obtain $\Phi(c) \cong 0$. 

(2) First we have that $\Phi(D^b(\text{coh}(\mathcal X))) \subset D^b(\text{mod-}\mathcal A)$
by Lemma \ref{coherent}.
For an $A$-module $y$, we consider a natural morphism $\alpha: \tau_{< - m}\Psi(y) \to \Psi(y)$.
Since $\Phi$ is bounded, $\Phi(\alpha): \Phi(\tau_{< - m}\Psi(y)) \to \Phi\Psi(y) \cong y$ vanishes 
for large $m$. 
Hence $\alpha = 0$, and $\Psi(y)$ is bounded. 
\end{proof}


Graduate School of Mathematical Sciences, University of Tokyo,
Komaba, Meguro, Tokyo, 153-8914, Japan. 

kawamata@ms.u-tokyo.ac.jp


\begin{thebibliography}{}

\bibitem{BBBP}
Ben-Bassat, O.; Block, J.; Pantev, T. 
{\em Non-commutative tori and Fourier-Mukai duality}.
Compositio Mathematica , Volume 143 , Issue 2 , March 2007 , pp. 423--475.
DOI: https://doi.org/10.1112/S0010437X06002636

\bibitem{Bondal}
Bondal, Alexey. 
{\em Representations of associative algebras and coherent sheaves}. 
(Russian) Izv. Akad. Nauk SSSR Ser. Mat. 53 (1989), no. 1, 25--44; 
translation in Math. USSR-Izv. 34 (1989), no. 1, 23--42.

\bibitem{BKR}
Bridgeland, Tom; King, Alastair; Reid, Miles. 
{\em The McKay correspondence as an equivalence of derived categories}.
J. Am. Math. Soc. 14 (2001) 535--554.
DOI: 10.1090/S0894-0347-01-00368-X

\bibitem{CBH}
Crawley-Boevey, William; Holland, Martin P. 
{\em Noncommutative deformations of Kleinian singularities}.
Duke Math. J. 92(3): 605--635 (15 April 1998). 
DOI: 10.1215/S0012-7094-98-09218-3

\bibitem{DVLL}
Dinh Van, Hoang; Liu, Liyu; Lowen, Wendy.
{\em Non-commutative deformations and quasi-coherent modules}.
Selecta Math. {\bf 23} (2016), 1061--1119.

\bibitem{Grothendieck}
Grothendieck, Alexander.
{\em Sur quelques points d'algebre homologique, I}.
Tohoku Math. J. (2) 9(2): 119--221 (1957). DOI: 10.2748/tmj/1178244839.

\bibitem{KV}
Kapranov, M.; Vasserot, E.
{\em Kleinian singularities, derived categories and Hall algebras}.
Math Ann 316, 565--576 (2000). 
https://doi.org/10.1007/s002080050344

\bibitem{DK}
Kawamata, Yujiro.
{\em D-equivalence and K-equivalence}.  
J. Diff. Geom. {\bf 61} (2002), 147--171.

\bibitem{toric}
Kawamata, Yujiro.
{\em Derived categories of toric varieties}.
Michigan Math. J. {\bf 54} (2006), 517--535.

\bibitem{SDG}
Kawamata, Yujiro.
{\em Birational geometry and derived categories}.
Surveys in Differential Geometry, Vol. 22, No. 1 (2017), pp. 291--317.

\bibitem{NCdef}
Kawamata, Yujiro.
{\em On formal non-commutative deformations of smooth varieties}.
arXiv:2402.15685

\bibitem{Lowen-VdBergh}
Lowen, Wendy; Van den Bergh, Michel. 
{\em Deformation theory of abelian categories}.
Trans. Amer. Math. Soc. {\bf 358} (2006), 5441--5483.

\bibitem{Rickard}
Rickard, J.
{\em Morita theory for derived categories}. 
J. London Math. Soc. Vol. 39, pp. 436--456, 1989.

\bibitem{Toda}
Toda, Yukinobu.
{\em Deformations and Fourier-Mukai transforms}.
J. Differential Geom. {\bf 81}(1): 197--224 (January 2009). DOI: 10.4310/jdg/1228400631. 

\bibitem{TU}
Toda, Yukinobu; Uehara, Hokuto. 
{\em Tilting generators via ample line bundles}.
Advances in Mathematics, Volume 223, Issue 1, 15 January 2010, Pages 1--29.
https://doi.org/10.1016/j.aim.2009.07.006

\bibitem{VdBergh}
Van den Bergh, Michel. 
{\em Three-dimensional flops and noncommutative rings}.
Duke Math. J. 122(3): 423-455 (15 April 2004). 
DOI: 10.1215/S0012-7094-04-12231-6

\bibitem{VdBergh2}
Van den Bergh, Michel. 
{\em Non-commutative crepant resolutions}. 
The legacy of Niels Henrik Abel, Springer, Berlin, pp. 749--770, 2004.

\end{thebibliography}
\end{document}